\def\ll{{\hat L}}
\def\LL{{\hat L}}
\newcommand{\ds}{\displaystyle}
\newcommand{\e}{\varepsilon}
\renewcommand{\H}{{\cal H}}
\renewcommand\iint{\displaystyle\int_{-1}^1}
\renewcommand{\k}{k}
\renewcommand{\ll}{\hat L}
\newcommand{\m}[1]{\mathbbm{#1}}
\newcommand{\pnu}{\partial_\nu}
\newcommand{\pp}{\Pi}
\newcommand{\ps}{{\partial_s}}
\newcommand{\py}{{\partial_y}}
\newcommand{\q}[1]{\mathcal{#1}}
\newcommand\RR{{\cal R}}
\renewcommand\SS{{\cal S}}
\newcommand{\vc}[2]{\begin{pmatrix} #1\\#2\end{pmatrix}}
\def\R{{\mathbb {R}}}
\DeclareMathOperator{\argth}{\mathrm{argth}}
\theoremstyle{plain}
\newtheorem{thm}{Theorem}
\newtheorem*{thm*}{Theorem}
\newtheorem{coro}[thm]{Corollary}
\newtheorem{prop}{Proposition}[section]
\newtheorem{cor}[prop]{Corollary}
\newtheorem{lem}[prop]{Lemma}
\newtheorem{defi}[prop]{Definition}
\newtheorem{cli}[prop]{Claim}
\theoremstyle{definition}
\theoremstyle{remark}
\newtheorem*{nb}{Remark}
\def\blfootnote{\xdef\@thefnmark{}\@footnotetext}
\title{\bf Prescribing the center of mass of a multi-soliton solution for a perturbed semilinear wave equation }
\author{M.A. Hamza\\
{\it \small % Department of Basic Sciences Deanship of Preparatory and Supporting Studies},\\
%{\it \small 
Imam Abdulrahman Bin Faisal University
P.O. Box 1982 Dammam, Saudi Arabia}\\
Hatem Zaag\\
{\it \small Universit\'e Paris 13, Sorbonne Paris Cit\'e},\\
{\it \small LAGA, CNRS (UMR 7539), F-93430, Villetaneuse, France}
}
\date{}
\begin{document}

\maketitle

\begin{abstract}
We construct a   finite-time blow-up solution for
a class of  strongly perturbed  semilinear wave
equation with an isolated  characteristic point  in one space dimension.
    Given any integer $k\ge 2$ and $\zeta_0 \in \m R$,
we construct a blow-up solution with a characteristic point $a$, such that the asymptotic behavior of the solution near $(a,T(a))$ shows
a decoupled sum of $k$ solitons with alternate signs, whose centers (in the hyperbolic geometry) have $\zeta_0$ as a center of mass, for all times.  Although the result is similar to the unperturbed case in its statement,  our method is new. Indeed,  our perturbed equation is not invariant under  the Lorentz transform,  and this  requires new ideas. In fact, 
 the main difficulty  in this paper  is to prescribe the center of mass $\zeta_0 \in \m R$.
We would like to mention that our method is valid also in  the unperturbed case, and simplifies the original proof by
 C\^ote and Zaag \cite{CZcpam13}, as far as the center of mass prescription is concerned.
% can be used  to simplify  the proof in the
\end{abstract}

\medskip

{\bf MSC 2010 Classification}:
35L05, 35L71, 35L67,
35B44, 35B40, 35B20.

% 35B40     Asymptotic behavior of solutions
% 35B44     Blow-up
% 35L05     Wave equation
% 35L67     Shocks and singularities
% 35L71     Semilinear second-order hyperbolic equations

\medskip

{\bf Keywords}: Semilinear wave equation, blow-up, one-dimensional
case,  characteristic point,  multi-solitons,  perturbations.

%%%%%%%%%%%%%%%%%%%%%%%%%%%%%%%%%%%%%%%%%%%%%%%%%%%%%%%%%%%%%%%%%%%%%%%%%%%%%
%%%%%%%%%%%%%%%%%%%%%%%%%%%%%%%%%%%%%%%%%%%%%%%%%%%%%%%%%%%%%%%%%%%%%%%%%%%%%
\section{Introduction}
%%%%%%%%%%%%%%%%%%%%%%%%%%%%%%%%%%%%%%%%%%%%%%%%%%%%%%%%%%%%%%%%%%%%%%%%%%%%%
%%%%%%%%%%%%%%%%%%%%%%%%%%%%%%%%%%%%%%%%%%%%%%%%%%%%%%%%%%%%%%%%%%%%%%%%%%%%%
This paper is concerned with blow-up solutions of a perturbed semilinear wave equation
\begin{equation}\label{PNLW}
\left\{
\begin{array}{l}
\partial^2_{t} u =\partial^2_{x} u+|u|^{p-1}u+
f(u)+g(x,t,u,\partial_x u,\partial_t u),\\
u(0)=u_0\mbox{ and }\partial_t u(0)=u_1,
\end{array}
\right.
\end{equation}
where $u(t):x\in{\m R} \rightarrow u(x,t)\in{\m R}$, $p>1$, $u_0\in \rm H^1_{\rm loc,u}$
and $u_1\in \rm L^2_{\rm loc,u}$ with
 $\|v\|_{\rm L^2_{\rm loc,u}}^2=\ds\sup_{a\in {\m R}}\int_{|x-a|<1}|v(x)|^2dx$
 and $\|v\|_{\rm H^1_{\rm loc,u}}^2 = \|v\|_{\rm L^2_{\rm loc,u}}^2+\|\nabla v\|_{\rm L^2_{\rm loc,u}}^2$.
We assume that  $f$  and $g$  are ${\cal {C}}^1$
functions, where $f:{\m R}\rightarrow {\m R} $  and $g:{\m
R}^{4}\rightarrow {\m R} $ satisfy the following conditions:
\begin{equation*}
(H)\left\{
\begin{array}{ll}
 |{f(u)}|\le
M_0\Big(1+\frac{|u|^p}{\log (2+u^2)^{\alpha}}\Big) , \  &{\textrm {for all }}\ u\in {\m R},\ \\
 |{g(x,t,u,v,z)}|\le M_0(1+|u|^{\frac{p+1}2}+|v|+|z|), & {\textrm {for all }}\ x,
t,u,v,z\in {\m R}.
\end{array}
\right.
\end{equation*}
where $M_0>0$ and $\alpha>1$.

\medskip

Under the  more restrictive assumptions
\begin{equation*}
(H')\left\{
\begin{array}{ll}
 |{f(u)}|\le
M_1(1+|u|^q) , \  &{\textrm {for all }}\ u\in {\m R},\ \\
 |{g(x,t,v,z)}|\le M_1(1+|v|+|z|), & {\textrm {for all }}\ x,
t,v,z\in {\m R}.
\end{array}
\right.
\end{equation*}
where $M_1>0$ and $q<p$,  we are able to add more information to our results (see Theorem 1 and Corollary 2).

As we will explain in details later, we would like to draw the attention of the reader to the fact that we 
are making  two major steps in considering equation \eqref{PNLW} under assumption $(H)$ in comparison with our previous  papers dedicated to perturbations of the wave equation and written under assumption $(H')$ (\cite{HZnonl12},
\cite{HZjhde12},
\cite{HZBSM}, \cite{HZdcds13})

- we allow a quasi critical perturbation $f(u)$ (in $\log$ scales), but this was already the case in 
 (\cite{H},
\cite{HS1},
\cite{HS2}).

- we allow a $u$ dependance in $g$, with a growth up to $|u|^{\frac{p+1}2}$.

\bigskip

The Cauchy problem for
equation  \eqref{PNLW} is solved   in the space ${\rm H}^1_{\rm loc,u}\times {\rm L}^2_{\rm loc, u}$.
 This follows from the finite speed of propagation and the well-posedness in $ H^1({\m R})\times L^2({\m R})$
(see for example Georgiev and Todorova \cite{GTjde94}).
%valid whenever  $1<p<1+\frac4{N-2}$.
 The existence of blow-up solutions $u(t)$ of  \eqref{PNLW}  follows from
energy techniques (see for example Levine and Todorova \cite{HAG} and Todorova \cite{T}).

\bigskip

Note that in this paper, we consider  a class of perturbations of the
idealized equation  (when $f\equiv g\equiv 0$).
This is quite meaningful, since physical models are sometimes damped and hardly come with a pure power source term
 (see Whitham \cite{Wjws99}). For
  more applications  in general relativity, see Donninger,  Schlag and  Soffer \cite{DSS}.

\bigskip

If $u$ is an arbitrary blow-up solution of   \eqref{PNLW}, we define (see for example Alinhac \cite{Apndeta95}) a 1-Lipschitz curve $\Gamma=\{(x,T(x))\}$
such that the maximal influence domain $D$ of $u$ (or the domain of definition of $u$) is written as
\begin{equation}\label{defdu}
D=\{(x,t)\;|\; t< T(x)\}.
\end{equation}
$\bar T=\inf_{x\in {\m R}}T(x)$ and $\Gamma$ are called the blow-up time and the blow-up graph of $u$.
A point $x_0$ is a non characteristic point
if  there are
\begin{equation}\label{nonchar}
\delta_0\in(0,1)\mbox{ and }t_0<T(x_0)\mbox{ such that }
u\;\;\mbox{is defined on }{\mathcal C}_{x_0, T(x_0), \delta_0}\cap \{t\ge t_0\}
\end{equation}
where ${\cal C}_{\bar x, \bar t, \bar \delta}=\{(x,t)\;|\; t< \bar t-\bar \delta|x-\bar x|\}$. We denote by $\RR$ (resp. $\SS$) the set of non characteristic (resp. characteristic) points.

\bigskip

In the case  $(f,g)\equiv ( 0,0)$), equation \eqref{PNLW}  reduces to the semilinear wave equation:
\begin{equation}\label{NLW}
\left\{
\begin{array}{l}
\partial^2_{t} u =\partial^2_{x} u+|u|^{p-1}u,\\
u(0)=u_0\mbox{ and }\partial_t u(0)=u_1.
\end{array}
\right.
\end{equation}

\bigskip

 In a series of papers \cite{MZjfa07}, \cite{MZcmp08}, \cite{MZajm11} and \cite{MZisol10}
  (see also the note \cite{MZxedp10}), Merle and Zaag  together with
   C\^ote and Zaag \cite{CZcpam13} give a full picture of  blow-up
  for solutions  of  equation \eqref{NLW} in one space dimension. Furthermore, in \cite{CZcpam13}, for
 any integer $k\ge 2$ and $\zeta_0 \in \m R$,   C\^ote and Zaag  construct a blow-up solution with a characteristic point $a$, such that the asymptotic behavior of the solution near $(a,T(a))$ shows
a decoupled sum of $k$ solitons with alternate signs, whose centers (in the hyperbolic geometry) have $\zeta_0$ as a center of mass, for all times.
Let us note that the invariance of equation \eqref{NLW} under  the Lorentz transform was  crucial
in \cite{CZcpam13} to prescribe the center of mass.

\bigskip

More generally,  in \cite{HZBSM},  we %Hamza and Zaag
also  construct    a finite-time blow-up solution with a characteristic point in    the unperturbed case, under assupmtion $(H')$,
 showing multi-solitons and prescribing the center of mass, if $g\equiv 0$. However, when $g\not\equiv 0$, we were unable to prescribe the center of mass  of the multi-soliton, since in this case  equation \eqref{PNLW} is not invariant under  the Lorentz transform. That obstruction  justifies
our new paper, where we invent  new ideas  in order to   prescribe the center of mass  of the multi-soliton.
More importantly, our method works under the more general assumption $(H)$. Moreover,  this new method is of course valid also for equation \eqref{NLW}.
Even better, the prescription
of the center of mass for \eqref{NLW} is easier thanks to our method.

\bigskip

Our aim in this work is to    prescribe the center of mass  of the multi-soliton.
 More precisely, for
 any integer $k\ge 2$ and $\zeta_0 \in \m R$,  we construct a blow-up solution with a characteristic point $a$, such that the asymptotic behavior of the solution near $(a,T(a))$ shows
a decoupled sum of $k$ solitons with alternate signs, whose centers (in the hyperbolic geometry) have $\zeta_0$ as a center of mass, for all times.

\bigskip

Before stating our result, let us introduce  the following similarity variables, for any   $(x_0,T_0)$ such that  $0< T_0\le T(x_0)$:
\begin{equation} \label{def:w}
w_{x_0, T_0}(y,s) = (T_0 -t)^{\frac{2}{p-1}} u(x,t), \quad y = \frac{x-x_0}{T_0-t}, \quad s = - \log (T_0-t).
\end{equation}
If $T_0=T(x_0)$, we will simply write $w_{x_0}$ instead of $w_{x_0, T(x_0)}$.
The function $w=w_{x_0}$ satisfies the following equation for all $y\in(-1,1)$ and $s\ge - \log T(x_0)$:
\begin{eqnarray}
\label{eq:nlw_w}
\partial^2_{s}w&=&\q L w -\frac{2(p+1)}{(p-1)^2}w+|w|^{p-1}w
-\frac{p+3}{p-1}\partial_sw-2y\partial^2_{y,s}
w +e^{-\frac{2ps}{p-1}}f\Big(e^{\frac{2s}{p-1}}w\Big)\\
&&+
e^{-\frac{2ps}{p-1}}g\Big(x_0+ye^{-s},T_0-e^{-s},e^{\frac{(p+1)s}{p-1}}\partial_yw,e^{\frac{(p+1)s}{p-1}}(\partial_sw+y\partial_y
w+\frac{2}{p-1}w)\Big),\nonumber
\end{eqnarray}
where  $ \q L w = \frac{1}{\rho} \partial_y (\rho
(1-y^2) \py w)$  and $\rho = \rho (y)=
(1-y^2)^{\frac{2}{p-1}}.$

\bigskip

In the unperturbed case where $f\equiv g \equiv  0$,
the Lyapunov functional for equation \eqref{eq:nlw_w}
\begin{equation}\label{defenergy}
E(w(s))= \iint \left(\frac 12 \left(\partial_s w\right)^2 + \frac 12  \left(\partial_y w\right)^2 (1-y^2)+\frac{(p+1)}{(p-1)^2}w^2 - \frac 1{p+1} |w|^{p+1}\right)\rho dy
\end{equation}
is defined for $(w,\partial_s w) \in \H$ where
\begin{equation}\label{defnh0}
\H = \left\{(q_1,q_2)
\;\; \middle| \;\;\|(q_1,q_2)\|_{\H}^2\equiv \int_{-1}^1 \left(q_1^2+\left(q_1'\right)^2  (1-y^2)+q_2^2\right)\rho dy<+\infty\right\}.
\end{equation}
We also introduce the projection of the space $\H$  defined in \eqref{defnh0} on the first coordinate:
\begin{equation}\label{30oct1}
\H_0 = \left\{r\in H^1_{loc}\;\;\middle|\;\;\|r\|_{\H_0}^2\equiv \int_{-1}^1 \left(r^2+\left(r'\right)^2  (1-y^2)\right)\rho dy<+\infty\right\}.
\end{equation}

Moreover, we introduce for all $|d|<1$ the following solitons defined by
\begin{equation}\label{defkd}
\kappa(d,y)=\kappa_0 \frac{(1-d^2)^{\frac 1{p-1}}}{(1+dy)^{\frac 2{p-1}}}\mbox{ where }\kappa_0 = \left(\frac{2(p+1)}{(p-1)^2}\right)^{\frac 1{p-1}} \mbox{ and }|y|<1.
\end{equation}
Note that $\kappa(d)$ is a stationary solution of \eqref{eq:nlw_w},
in the particular case where $(f,g) \equiv (0,0)$.
%Moreover, for all $d \in (-1,1)$ and for $\nu > -1+|d|$, we also introduce
%$\kappa^{*}(d,\nu ,y)=(\kappa^{*}_1, \kappa^{*}_2)(d,\nu,y)$, where
%\begin{eqnarray}
%\kappa_1^{*}(d,\nu ,y)&=&\kappa_0 \frac{(1-d^2)^{\frac 1{p-1}}}{(1+dy+\nu)^{\frac 2{p-1}}} \\
%\kappa_2^{*}(d,\nu ,y)&=&\nu \partial_{\nu}\kappa_1^{*}(d,\nu ,y)=-\frac{2\nu \kappa_0}{p-1} \frac{(1-d^2)
%^{\frac 1{p-1}}}{(1+dy+\nu)^{\frac {p+1}{p-1}}}
%\end{eqnarray}
We also   introduce
\begin{equation}\label{solpart}
\bar \zeta_i(s) = \left(i-\frac{(k+1)}2\right)\frac{(p-1)}2\log s + \bar\alpha_i(p,k)
\end{equation}
where the sequence $(\bar\alpha_i)_{i=1,\dots,k}$ is uniquely determined by the fact that $(\bar \zeta_i(s))_{i=1,\dots,k}$ is an explicit solution with zero center of mass for the following ODE system:
\begin{equation} \label{eq:tl}
 \frac 1{c_1}\dot \zeta_i = e^{ - \frac{2}{p-1} (\zeta_i - \zeta_{i-1}) } - e^{- \frac{2}{p-1} (\zeta_{i+1} - \zeta_i) }, \qquad \forall i=1,\dots,k.
\end{equation}
where $c_1=c_1(p)>0$ and $\zeta_0(s)\equiv \zeta_{k+1}(s) \equiv 0$ (see Section 2 in \cite{CZcpam13}  for more details).

\bigskip

Given an arbitrary blow-up solution $u(x,t)$ to \eqref{PNLW} and a characteristic point
$x_0$, we could extend in  \cite{HZBSM} the results first proved for  \eqref{NLW} by Merle and Zaag in
 \cite{MZbsm11}, and showed
%
%Let us first recall the description of the behavior of $w_{x_0}$ where $x_0$ is
%characteristic point
% obtained in \cite{HZBSM} which is  our extension of the analysis of Merle and Zaag in \cite{MZbsm11} for %he unperturbed case. More precisely, we established 
the following results, under the assumption $(H')$ (and we think  the results can be easily extended to assumption   $(H)$ following our strategy in  \cite{HZBSM})

\medskip

\newblock{
{\bf (Description of the behavior of $w_{x_0}$ where $x_0$ is
characteristic)} Consider $x_0\in \SS$. Then, there is $\zeta_0(x_0)\in \m R $
such that
\begin{equation}\label{cprofile00}
\left\|\vc{w_{x_0}(s)}{\ps w_{x_0}(s)} - \theta_1\vc{\ds\sum_{i=1}^{k(x_0)} (-1)^{i+1}\kappa(d_i(s))}0\right\|_{\H} \to 0
\end{equation}
as $s\to \infty$, for some $k(x_0)\ge 2,$  $\theta_1=\pm 1$
 and continuous $d_i(s) = -\tanh \zeta_i(s)$ with
\begin{equation}\label{equid0}%\label{refequid}
\zeta_i(s) = \bar \zeta_i(s) + \zeta_0,
\end{equation}
where $\bar \zeta_i(s)$ is introduced above in \eqref{solpart}.
}

%%%%%%%%%%%%%%%%%%%%%%%%%%%%%%%%%%%%%%%%%%%%%%%%%%%%%%%%%%%%%%%%%%%%%%%%%%
%%%%%%%%%%%%%%%%%%%%%%%%%%%%%%%%%%%%%%%%%%%%%%%%%%%%%%%%%%%%%%%%%%%%%%%%%%

\medskip

Following this \newblock{ description} result, we naturally ask the corresponding
\newblock{ construction} question:

\medskip

 \newblock{Can we construct an example for each blow-up modality described in \eqref{cprofile00}?}

\medskip

 In other words, given $k\ge 2$ and and $\zeta_0\in \m R$, can we construct a solution to equation \eqref{PNLW} showing the behavior \eqref{cprofile00} and \eqref{equid0}?

\medskip

In   \cite{CZcpam13},  C\^ote and Zaag showed that this  is possible in the  unperturbed case \eqref{NLW}.
  In \cite{HZBSM}, we had a partial result for equation \eqref{PNLW}, in the sense that we could prescribe the number of solitons, but not the location of their center of mass, 
  unless $g\equiv 0$. Indeed, the method of  \cite{CZcpam13} extends with no difficulty to equation \eqref{PNLW},   if  $g\equiv 0$. If not, or if assumption $(H)$ holds,  we need new ideas and this is the aim of the paper.
More precisely, this is our main result:
%%%%%%%%%%%%%%%%%%%%%%%%%%%%%%%%%%%%%%%%%%%%%%%%%%%%%%%%%%%%%%%%%%%%%%
%%%%%%%%%%%%%%%%%%%%%%%%%%%%%%%%%%%%%%%%%%%%%%%%%%%%%%%%%%%%%%%%%%%%%%
\begin{thm}[\bf{Existence of a solution  to \eqref{PNLW} with prescribed blow-up behavior at a characteristic point}]\label{mainth}
Assume that assumption $(H)$ holds.  For any integer $k\ge 2$ and $\zeta_0\in \m R$, there exists a blow-up solution $u(x,t)$ to equation \eqref{PNLW} in $\rm H^1_{\rm loc,u}\times \rm L^2_{\rm loc,u}(\m R)$ % with $0\in\SS$ 
such that
\begin{equation}\label{cprofile0}
\left\|\vc{w_0(s)}{\ps w_0(s)} - \vc{\ds\sum_{i=1}^{k} (-1)^{i+1}\kappa(d_i(s))}0\right\|_{\H} \to 0\mbox{ as }s\to \infty,
\end{equation}
with
\begin{equation}\label{refequid1}
d_i(s) = -\tanh \zeta_i(s), \quad
\zeta_i(s) = \bar \zeta_i(s) + \zeta_0
\end{equation}
and $\bar \zeta_i(s)$ defined in \eqref{solpart}. Moreover, if $(H')$ holds, then the origin is a  characteristic point (i.e.  $0\in\SS$).
\end{thm}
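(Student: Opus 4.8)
The plan is to pass to the self-similar variables \eqref{def:w} centered at $x_0=0$ with a blow-up time $T_0$ to be chosen, so that producing $u$ amounts to constructing a solution $w=w_{0,T_0}$ of \eqref{eq:nlw_w} defined for all $s\ge s_0$ and satisfying \eqref{cprofile0}. I would seek $w$ in the modulated form
\[
w(y,s)=\sum_{i=1}^{k}(-1)^{i+1}\kappa(d_i(s))+q(y,s),\qquad d_i(s)=-\tanh\zeta_i(s),
\]
where $q$ is a small remainder in $\H$ and the $k$ parameters $\zeta_i(s)$ are free functions fixed by modulation. Concretely, I would impose $k$ orthogonality conditions on $q$ against the null directions $\partial_{\zeta_i}\kappa(d_i)$ of the operator obtained by linearizing \eqref{eq:nlw_w} around each soliton. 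Differentiating these conditions in $s$ yields the modulation ODEs for the $\zeta_i$, which I expect to be the Toda-type system \eqref{eq:tl} up to three kinds of errors: a quadratic coupling with $q$, a coupling between distinct solitons, and the genuinely new contribution of $f$ and $g$. Under $(H)$ the latter is controlled because, after the rescaling in \eqref{eq:nlw_w}, the factor $e^{-2ps/(p-1)}f(e^{2s/(p-1)}w)$ is of size $O(s^{-\alpha})$ on the soliton (the quasi-critical $\log$-gain with $\alpha>1$ is exactly what makes it integrable in $s$), and similarly for $g$. It is then natural to split the $k$ modulation variables into the $k-1$ relative coordinates $\zeta_{i+1}-\zeta_i$, which self-organize and converge to the explicit profile \eqref{solpart}, and the scalar center of mass $Z(s)=\frac1k\sum_i\zeta_i(s)$.

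The next step is to control $q$ by a Lyapunov/energy method based on \eqref{defenergy}: linearized around the $k$-soliton, the associated quadratic form is coercive on the subspace orthogonal to the null modes and to the finitely many exponentially growing modes (one per soliton) produced by the instability of each soliton in self-similar variables. Absorbing the $f,g$ contributions using the same $s$-integrability, I would obtain a differential inequality forcing $\|q(s)\|_{\H}\to0$ provided the $k$ unstable coordinates are kept under control; this last point is achieved by a finite-dimensional topological (shooting) argument, parametrizing the initial data by the unstable coordinates and selecting, via a Brouwer/index argument, data for which $q(s)\to0$. At this stage one obtains, for each admissible choice of the remaining free scalar parameter $Z(s_0)$, a solution satisfying \eqref{cprofile0} with $\zeta_i(s)=\bar\zeta_i(s)+Z_\infty+o(1)$ for some limit $Z_\infty$.

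The \emph{main obstacle}, and the point where the method departs from \cite{CZcpam13}, is to prescribe $Z_\infty=\zeta_0$ without the Lorentz symmetry. Summing the modulation equations, the leading Toda interaction cancels by telescoping, so $\dot Z$ consists only of the (integrable in $s$) error terms above; hence $Z(s)\to Z_\infty$ exists and $Z_\infty-Z(s_0)$ is small, uniformly and continuously in $Z(s_0)$. I would then close by a one-dimensional fixed-point argument: the map $Z(s_0)\mapsto Z_\infty(Z(s_0))$ is a continuous perturbation of the identity, so by the intermediate value theorem there is a choice of $Z(s_0)$ with $Z_\infty=\zeta_0$, which yields \eqref{refequid1}. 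This scalar fixed point replaces the Lorentz boost; it is precisely what makes the argument work under $(H)$, and the same replacement simplifies the unperturbed construction.

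Finally, under the stronger assumption $(H')$ I would upgrade the conclusion to $0\in\SS$: the decay of $q$ shows that near $(0,T_0)$ the solution genuinely behaves like a decoupled sum of $k\ge2$ solitons rather than a single traveling soliton, which, together with the characterization of characteristic versus non-characteristic points in the sense of \eqref{nonchar}, identifies the origin as a characteristic point.
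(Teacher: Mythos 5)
Your overall architecture (modulation around a $k$-soliton ansatz, a Toda-type system for the $\zeta_i$, energy control of $q$ modulo finitely many bad modes, a topological shooting argument, and the observation that $\dot Z$ telescopes to an $s$-integrable error) matches the paper's strategy, and you have correctly identified the loss of the Lorentz transform as the crux. The genuine gap is in your final, and crucial, step: the ``one-dimensional fixed point'' applied to the map $Z(s_0)\mapsto Z_\infty$. For each fixed $Z(s_0)$, your inner construction selects the unstable coordinates $(\nu_{i,0})_i$ by a Brouwer/index argument; such arguments yield \emph{existence} of a good parameter, but neither uniqueness nor continuous dependence on the side parameter $Z(s_0)$. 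Consequently $Z_\infty(Z(s_0))$ is not a well-defined continuous function but a multivalued correspondence, and the intermediate value theorem cannot be invoked. Unless you produce a continuous selection of the inner shooting (which the index argument does not provide), the outer fixed-point layer collapses, and this is exactly the step that the Lorentz boost used to perform for free in \cite{CZcpam13}.

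The paper avoids the nested structure altogether: the initial center-of-mass offset is introduced as an extra shooting parameter $\phi_{1,0}$ directly in the initial data \eqref{w0}--\eqref{v14}, and a \emph{single} index-theory argument is run in dimension $k+1$ on $\big((\nu_{i,0})_i,\phi_{1,0}\big)\in\m B$. The trapping set of Definition \ref{defVa} imposes $s^{\eta}|\phi_i(s)|\le 1$ for \emph{all} $i$, including the neutral direction $i=1$, and the new estimate making this work is the outgoing transversality of the rescaled flow at the boundary $|\phi_1(s)|=s^{-\eta}$: since $m_1=0$ by \eqref{defmi}, estimate \eqref{est:phi2} gives $|\dot\phi_1|\le Cs^{-1-2\eta}$, which is dominated by the shrinkage rate $\eta s^{-1-\eta}$ of the boundary, so trajectories hitting the boundary must exit. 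Thus the neutral center-of-mass mode is handled inside the Brouwer argument as if it were unstable, the selected solution automatically satisfies $\phi_1(s)\to 0$, and the center of mass equals $\zeta_0$ exactly in the limit, with no outer fixed point needed. (Your concluding remark on $(H')$ is essentially the paper's: note only that it requires the classification theorem of \cite{HZBSM} -- convergence to a \emph{single} soliton at non-characteristic points -- and not merely the definition \eqref{nonchar}.)
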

%%%%%%%%%%%%%%%%%%%%%%%%%%%%%%%%%%%%%%%%%%%%%%%%%%%%%%%%%%%%%%%%%%%%%%
%%%%%%%%%%%%%%%%%%%%%%%%%%%%%%%%%%%%%%%%%%%%%%%%%%%%%%%%%%%%%%%%%%%%%%
\begin{nb}
Note from \eqref{refequid1} and \eqref{solpart} that the center of mass  of $\zeta_i(s)$ is fixed, in the sense that
\begin{equation}\label{barycenter}
\frac{\zeta_1(s)+ \dots +\zeta_k(s)}k= \frac{\bar\zeta_1(s)+ \dots +\bar\zeta_k(s)}k+\zeta_0=\zeta_0,\;\;\forall s\ge -\log T(0).
\end{equation}
\end{nb}

\begin{nb}
Let us remark from the blow-up description we made in  \cite{HZBSM} that,  under the assumption $(H')$
 (and we think  the results can be easily extended to assumption   $(H)$), 
 the origin is an isolated characteristic point. We also would like to  mention that,
as in the unperturbed case \eqref{NLW},    under the assumption $(H)$
 (even under assumption   $(H')$) we are unable to say whether this solution has other characteristic points or not. In particular, we have been unable to find a solution with $\SS$ exactly equal to $\{0\}$. 
Nevertheless, let us remark that from the finite speed of propagation, we can prescribe more characteristic points, as follows:
\end{nb}

%%%%%%%%%%%%%%%%%%%%%%%%%%%%%%%%%%%%%%%%%%%%%%%%%%%%%%%%%%%%%%%%%%%%%%%%%%
%%%%%%%%%%%%%%%%%%%%%%%%%%%%%%%%%%%%%%%%%%%%%%%%%%%%%%%%%%%%%%%%%%%%%%%%%%
%%%%%%%%%%%%%%%%%%%%%%%%%%%%%%%%%%%%%%%%%%%%%%%%%%%%%%%%%%%%%%%
\begin{coro}[Prescribing more characteristic points]\label{cormore} 
Assume that assumption $(H)$ holds.
Let $I=\{1,...,n_0\}$ or $I=\m N$ and for all $n\in I$, $x_n\in \m R$, $T_n>0$, $k_n \ge 2$ and $\zeta_{0,n}\in \m R$ such that
\begin{equation*}%\label{necess}
x_n+T_n<x_{n+1}-T_{n+1}.
\end{equation*}
Then, there exists a blow-up solution $u(x,t)$ of equation \eqref{PNLW} in $\rm H^1_{\rm loc,u}\times \rm L^2_{\rm loc,u}(\m R)$ with $T(x_n)=T_n$ and for all $n\in I,$
\begin{equation*}%\label{cprofile0}
\left\|\vc{w_{x_n}(s)}{\ps w_{x_n}(s)} - \vc{\ds\sum_{i=1}^{k_n} (-1)^{i+1}\kappa(d_{i,n}(s))}0\right\|_{\H} \to 0\mbox{ as }s\to \infty,
\end{equation*}
with
\begin{equation*}%\label{refequid1}
\forall i=1,\dots,k_n,\;\;d_{i,n}(s) = -\tanh \zeta_{i,n}(s),\;\;
\zeta_{i,n}(s) = \bar \zeta_i(s) + \zeta_{0,n}
\end{equation*}
and $\bar \zeta_i(s)$ defined in \eqref{solpart}. Moreover, if   $(H')$ holds, then $\{x_n\;|\; n\in I\} \subset \SS$.
\end{coro}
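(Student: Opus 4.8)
The plan is to deduce Corollary \ref{cormore} from Theorem \ref{mainth} by a finite-speed-of-propagation argument, gluing together one single-point solution per prescribed characteristic point. The geometric input is that the separation hypothesis $x_n+T_n<x_{n+1}-T_{n+1}$ forces the backward light cones $\mathcal C_{x_n,T_n,1}=\{(x,t)\mid t<T_n-|x-x_n|\}$ to be pairwise disjoint; in particular, at the initial time $t=0$ their bases $B_n=(x_n-T_n,x_n+T_n)$ are disjoint intervals separated by nondegenerate gaps. Since the behavior \eqref{cprofile0} of $w_{x_n}$, the value $T(x_n)$, and the characteristic/non-characteristic nature of $x_n$ (see \eqref{nonchar}, where the truncated cone $\mathcal C_{x_n,T_n,\delta_0}$ with $\delta_0\in(0,1)$ sits inside $\mathcal C_{x_n,T_n,1}$) all depend only on the restriction of the solution to $\mathcal C_{x_n,T_n,1}$, hence only on the Cauchy data on $B_n$, it suffices to produce global data whose restriction to each $B_n$ coincides with the data of an appropriate single tower-of-solitons solution.

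First I would construct, for each $n\in I$, a solution $u_n$ of \eqref{PNLW} with characteristic point $x_n$, blow-up time $T(x_n)=T_n$, and asymptotics given by \eqref{cprofile0}--\eqref{refequid1} with $k=k_n$ and $\zeta_0=\zeta_{0,n}$. This is exactly the content of Theorem \ref{mainth}, except that the theorem is phrased at the origin. The key observation is that the construction underlying Theorem \ref{mainth} is carried out in the self-similar variables \eqref{def:w} centered at a \emph{general} point $(x_0,T_0)$: the resulting equation \eqref{eq:nlw_w} already carries the explicit dependence $g(x_0+ye^{-s},T_0-e^{-s},\dots)$, and the only property of $g$ used is the bound $(H)$, which is uniform with respect to the base point $(x_0,T_0)$. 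Consequently the proof applies verbatim with the origin replaced by any $x_n\in\m R$, yielding such a $u_n$.

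Then I would define global data $(u_0,u_1)\in \rm H^1_{\rm loc,u}\times \rm L^2_{\rm loc,u}$ equal to the data of $u_n$ on each base $B_n$ and interpolated on the complementary gaps. Because the gaps are nondegenerate (strict separation), there is room to choose a smooth, bounded interpolation matching the boundary values of consecutive $u_n$, keeping $u_0\in \rm H^1_{\rm loc}$; no matching is needed for the $\rm L^2$ component $u_1$. Solving the Cauchy problem for \eqref{PNLW} with this data (well-posedness in $\rm H^1_{\rm loc,u}\times \rm L^2_{\rm loc,u}$ is recalled in the introduction) produces a solution $u$, and finite speed of propagation gives $u\equiv u_n$ on $\mathcal C_{x_n,T_n,1}\cap\{t\ge 0\}$ for every $n$. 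It follows that $T(x_n)=T_n$, that $w_{x_n}$ has the prescribed behavior, and, when $(H')$ holds, that $x_n\in\SS$.

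I expect the main obstacle to be twofold. First, one must ensure the prescribed blow-up time $T_n$ is realizable from data at $t=0$: the self-similar construction controls the solution only for large $s$ (equivalently $t$ close to $T_n$), where the perturbation prefactors $e^{-\frac{2ps}{p-1}}$ in \eqref{eq:nlw_w} are small, so producing Cauchy data at $t=0$ requires resolving \eqref{PNLW} backward from a time $t_n<T_n$ down to $0$, together with a rescaling (which preserves $(H)$ up to a change of the constant $M_0$) in order to cover all $T_n>0$ rather than only small ones. Second, in the case $I=\m N$ one must upgrade the gluing to a \emph{uniform} statement, i.e. bound the $\rm H^1_{\rm loc,u}\times \rm L^2_{\rm loc,u}$ norm of the patched data independently of $n$; this is where the local-uniform nature of the spaces and the separation of the $B_n$ are used, together with uniform control of the single-point data coming from the construction.
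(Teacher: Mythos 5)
Your skeleton (one single-point solution per $n$, initial data glued on the disjoint bases $B_n=(x_n-T_n,\,x_n+T_n)$, finite speed of propagation to transfer the local behavior) is exactly the route the paper has in mind: the authors give no proof at all, asserting that the corollary follows immediately from Theorem \ref{mainth} by finite speed of propagation. However, the way you produce the individual solutions $u_n$ contains a genuine gap. You run the self-similar construction around $(x_n,T_n)$, which only yields $u_n$ on the truncated cone $\{t\ge T_n-e^{-s_0}\}$, and then propose to ``resolve \eqref{PNLW} backward'' down to $t=0$. Backward solvability on the whole interval is not justified: \eqref{PNLW} is a semilinear wave equation, its backward Cauchy problem is only \emph{locally} well posed, and the backward flow may itself blow up before reaching $t=0$; nothing in your argument excludes this. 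The step is also avoidable, and the intended argument avoids it by using Theorem \ref{mainth} as a black box: its proof already delivers a solution with Cauchy data at $t=0$ (see \eqref{u0u1}) and $T(0)=1$. For each $n$ one applies this to the translated--rescaled nonlinearities
$\tilde f_n(v)=T_n^{\frac{2p}{p-1}}f\big(T_n^{-\frac{2}{p-1}}v\big)$ and
$\tilde g_n(x,t,u,v,z)=T_n^{\frac{2p}{p-1}}g\big(x_n+T_nx,\,T_nt,\,T_n^{-\frac{2}{p-1}}u,\,T_n^{-\frac{p+1}{p-1}}v,\,T_n^{-\frac{p+1}{p-1}}z\big)$,
which satisfy $(H)$ (and $(H')$) with modified constants and the same $\alpha$; since the scaling factor $1/T_n$ is a fixed number, not a quantity depending on $s_0$, there is no circularity in the choice of constants. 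Pulling back, $u_n(x,t)=T_n^{-\frac{2}{p-1}}\hat u_n\big((x-x_n)/T_n,\,t/T_n\big)$ solves the original equation, has data at $t=0$, blows up at $(x_n,T_n)$, and its similarity profile at $(x_n,T_n)$ is the constructed one up to the time shift $s\mapsto s+\log T_n$, which perturbs $\bar\zeta_i(s)$ only by $O(1/s)$ and is absorbed thanks to \eqref{contkd}. With these $u_n$ in hand, your gluing and finite-speed argument is correct.

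A second, smaller error: you claim that whether $x_n\in\SS$ depends only on the light-cone restriction of $u$ because ${\cal C}_{x_n,T_n,\delta_0}$ with $\delta_0\in(0,1)$ ``sits inside'' ${\cal C}_{x_n,T_n,1}$. It is the opposite: since $\delta_0<1$, the cone $\{t<T_n-\delta_0|x-x_n|\}$ \emph{contains} the light cone, so non-characteristicity in the sense of \eqref{nonchar} is a statement about $u$ outside the light cone and is not inherited from $u_n$ by finite speed of propagation. The correct argument, the one used at the end of the proof of Theorem \ref{mainth}, is indirect: under $(H')$, the classification of \cite{HZBSM} forces $w_{x_n}$ to converge to a single soliton $\pm\kappa(d)$ whenever $x_n\in\RR$; since $w_{x_n}$ \emph{is} determined by the light-cone restriction and converges to an alternating sum of $k_n\ge 2$ solitons, $x_n\notin\RR$, i.e.\ $x_n\in\SS$. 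Finally, your uniformity worry for $I=\m N$ is legitimate, but the fix you invoke (``uniform control of the single-point data coming from the construction'') is not available: the constants and rescaling factors depend on $n$, so in general the glued data is only controlled in local, non-uniform spaces unless one additionally assumes uniform bounds on $(T_n,k_n,\zeta_{0,n})$.
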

%%%%%%%%%%%%%%%%%%%%%%%%%%%%%%%%%%%%%%%%%%%%%%%%%%%%%%%%%%%%%%%%%%%%%%
%%%%%%%%%%%%%%%%%%%%%%%%%%%%%%%%%%%%%%%%%%%%%%%%%%%%%%%%%%%%%%%%%%%%%%
\begin{nb}
Again, we are unable to construct a solution with $\SS = \{x_n\;|\; n\in I\}$.
\end{nb}
%%%%%%%%%%%%%%%%%%%%%%%%%%%%%%%%%%%%%%%%%%%%%%%%%%%%%%%%%%%%%%%%%%%%%%
%%%%%%%%%%%%%%%%%%%%%%%%%%%%%%%%%%%%%%%%%%%%%%%%%%%%%%%%%%%%%%%%%%%%%%

As one can see from \eqref{cprofile0} and \eqref{refequid1}, the solution we have just constructed in Theorem \ref{mainth} behaves like the sum of $k$ solitons as $s\to \infty$. In the literature, such a solution is called a {\it multi-soliton solution}. Constructing multi-soliton solutions is an important problem in nonlinear dispersive equations. It has already be done for the $L^2$ critical and subcritical nonlinear Schr\"odinger equation (NLS) (see Merle \cite{Mcmp90} and Martel and Merle \cite{MMihp06}), the $L^2$ critical and subcritical generalized Korteweg de Vries equation (gKdV) (see Martel \cite{Majm05}), and for the $L^2$ supercritical case both for (gKdV) and (NLS) equations in C\^ote, Martel and Merle \cite{CMMrmia11}.

\medskip
%
%More generally, constructing a solution to some Partial Differential Equation with a prescribed behavior (not necessarily multi-solitons solutions) is an important question. We solved this question for (gKdV) in C\^ote \cite{Cjfa06,Cdmj07}, and also for parabolic equations exhibiting blow-up, like the semilinear heat equation with Merle in \cite{MZdmj97, MZcras96}, the complex Ginzburg-Landau equation in \cite{Zihp98} and with Masmoudi in \cite{MZjfa08}, or a gradient perturbed heat equation with Ebde in \cite{EZsema11}. In all these cases, the prescribed behavior shows a convergence to a limiting profile in some rescaled coordinates, as the time approaches the blow-up time.
%
%\medskip

\medskip

More generally, constructing a solution to some 
PDE
with a prescribed behavior (not necessarily multi-solitons solutions) is an important question. That question was solved 
for (gKdV) by C\^ote \cite{Cjfa06,Cdmj07}, and also for parabolic equations exhibiting blow-up, like the semilinear heat equation by
 Bressan \cite{Biumj90, Bjde92} (with an exponential source), Merle \cite{Mcpam92}, Bricmont and Kupiainen \cite{BKnonl94},
 Merle and Zaag in \cite{MZdmj97, MZcras96}, Schweyer \cite{Sjfa12} (in the critical case), Mahmoudi, Nouaili and Zaag \cite{MNZna16} (in the periodic case),
 the complex Ginzburg-Landau equation by Zaag \cite{Zihp98},  Masmoudi and Zaag in \cite{MZjfa08} and also Nouaili and Zaag  \cite{NZarma18}, 
a complex heat equation with no gradient structure by Nouaili and Zaag \cite{NZcc14}, a gradient perturbed heat equation in the subcritical case by
 Ebde and Zaag in \cite{EZsema11}, then 
by Tayachi and Zaag in the critical case in \cite{TZ15, TZnormandie15}
and also by  Ghoul,  Nguyen  and Zaag in  \cite{GNZjde17},  a strongly perturbed 
 heat equation in Nguyen and Zaag \cite{NZcs14},  a non scaling invariant  heat equation in Duong, Nguyen, and Zaag
\cite{DNZtjm18},  two non variational parabolic system 
 Ghoul,  Nguyen  and Zaag  \cite{GNZIHP18, GNZJDE18, GNZSMT18}  or a higher order parabolic  equation in \cite{GNZ18P}.
 Other examples are available for Schr\"odinger maps (see Merle, Rapha\"el and Rodnianski \cite{MRRim13}),
 the wave maps (see Ghoul, Ibrahim and Nguyen  \cite{GINpre17}), 
 and also for the Keller-Segel model (see Rapha\"el and Schweyer \cite{RSma14}, and also Ghoul and Masmoudi \cite{GM16}).

\medskip

Surprisingly enough, in both the parabolic equations above and the supercritical dispersive equations treated in \cite{CMMrmia11}, the same topological argument is crucial to control the directions of instability. This will be the case again for the semilinear wave equation \eqref{PNLW} under consideration. 
As for the strategy of the proof, we rely on  the following two step method:

\medskip

- As in the unperturbed case, thanks to a dynamical system formulation, we show that controlling the similarity variables version $w(y,s)$  solution of \eqref{eq:nlw_w} around the expected behavior \eqref{cprofile0} reduces to the control of the unstable directions, whose number is finite. This dynamical system formulation is essentially the same as the one used in    \cite{CZcpam13} and \cite{MZisol10}.

- Then, we solve the finite dimensional problem thanks to a topological argument based on index theory. 

\medskip

We would like to insist on the fact that we introduce a new feature in the method to allow us 
to directly
 prescribe the center of mass as required in \eqref{barycenter}.

\section{Construction of a multi-soliton solution in similarity variables}\label{secweak}
%%%%%%%%%%%%%%%%%%%%%%%%%%%%%%%%%%%%%%%%%%%%%%%%%%%%%%%%%%%%%%%%%%%%%%
%%%%%%%%%%%%%%%%%%%%%%%%%%%%%%%%%%%%%%%%%%%%%%%%%%%%%%%%%%%%%%%%%%%%%%
In this section,  we assume that  $(H)$ holds, and 
we construct a multi-soliton solution in similarity variables for   equation 
\eqref{eq:nlw_w}. At the end of the section, we use this construction to prove Theorem 1 (note that we 
don't prove Corollary 2, as it follows immediately from  Theorem 1  thanks to the 
finite speed of propagation). At the very end of the section, we will consider the more restrictive hypothesis $(H')$
to show that the origin is a characteristic point.

\medskip

Technically, we use the dynamical system formulation introduced in \cite{MZisol10} and used in \cite{CZcpam13}.
 For that purpose, we introduce for all $d\in (-1,1)$ and $\nu >-1+|d|$, $\kappa^*(d,\nu,y) = (\kappa_1^*, \kappa_2^*)(d,\nu,y)$ where
\begin{eqnarray}\label{defk*}
\kappa_1^*(d,\nu, y) & =&
\kappa_0\frac{(1-d^2)^{\frac 1{p-1}}}{(1+dy+\nu)^{\frac 2{p-1}}},\label{defk*} \\
\kappa_2^*(d,\nu, y) & =& \nu \pnu \kappa_1^*(d,\nu, y) =
-\frac{2\kappa_0\nu}{p-1}\frac{(1-d^2)^{\frac 1{p-1}}}{(1+dy+\nu)^{\frac {p+1}{p-1}}}.\label{defk**}
\end{eqnarray}
%In our paper,
We refer to these functions as ``generalized solitons'' or solitons for short. Notice that for any $\mu\in\m R$, $\kappa^*(d,\mu e^s,y)$ is a solution to equation \eqref{eq:nlw_w} in the unperturbed case. Then, note that:\\
- $\kappa^*(d,\mu e^s,y)\to (\kappa(d),0)$ in $\H$ as $s\to -\infty$;\\
- when $\mu=0$, we recover the stationary solutions $(\kappa(d),0)$ defined in \eqref{defkd};\\
- when $\mu>0$, the solution exists for all $(y,s) \in (-1,1)\times \m R$ and converges to $0$ in $\H$ as $s\to \infty$ (it is a heteroclinic connection between $(\kappa(d),0)$ and $0$);\\
- when $\mu<0$, the solution exists for all $(y,s) \in (-1,1)\times \left(-\infty, \log\left(\frac {|d|-1}\mu\right)\right)$ and blows up at time $s=\log\left(\frac {|d|-1}\mu\right)$.\\
We also introduce for $l=0$ or $1$, for any $d\in (-1,1)$ and $r\in \H$,
\begin{gather}
\pp_l^d(r) =\phi\left(W_l(d), r\right), \quad \text{where} \label{defpdi} \\
\begin{aligned}
\phi(q,r) & := \int_{-1}^1 \left(q_1r_1+q_1' r_1' (1-y^2)+q_2r_2\right)\rho dy, \nonumber\\
%& =\int_{-1}^1 \left(q_1\left(-\q L r_1+r_1\right) +q_2 r_2\right)\rho dy,\nonumber\\
W_l(d,y) & := (W_{l,1}(d,y), W_{l,2}(d,y)), \quad \text{with} \nonumber%\label{defPhi}
\end{aligned} \\
W_{1,2}(d,y)(y)= c_1(d)\frac{(1-d^2)^{\frac 1{p-1}}(1-y^2)}{(1+dy)^{\frac 2{p-1}+1}},\quad
W_{0,2}(d,y) = c_0\frac {(1-d^2)^{\frac 1{p-1}}(y+d)}{(1+dy)^{\frac 2{p-1}+1}}, \label{defWl2-0}
\end{gather}
for some positive $c_1(d)$ and $c_0$, and $W_{l,1}(d,y)\in \H_0$ is uniquely determined as the solution of
\begin{equation}\label{eqWl1-0}
-\q L r + r =\left(l - \frac{p+3}{p-1}\right)W_{l,2}(d) - 2 y\py W_{l,2}(d)+ \frac 8{p-1} \frac{W_{l,2}(d)}{1-y^2}
\end{equation}
normalized by the fact that $\pp_l^d(F_l(d)) =\phi\left(W_l(d), F_l(d)\right)$, where
\begin{equation*}%\label{deffld}
F_1(d,y)=(1-d^2)^{\frac p{p-1}}\vc{\frac{1}{(1+dy)^{\frac 2{p-1}+1}}}{\frac{1}{(1+dy)^{\frac 2{p-1}+1}}},\;\; F_0(d,y)=(1-d^2)^{\frac 1{p-1}}\vc{\ds\frac{y+d}{(1+dy)^{\frac 2{p-1}+1}}}{0}
\end{equation*}
(see estimate (3.57) in \cite{MZajm11} for more details).

\bigskip

Given $k\ge 2$, $\zeta_0\in  {\m R}$  and $s_0>0$, we will construct the multi-solution as a solution
to the Cauchy problem of equation \eqref{eq:nlw_w} with initial data (at $s=s_0$)  depending on $k+1$ parameters:
  $|\nu_{i,0}|\le s_0^{-\frac 12 - |\gamma_i|}$ for $i=1,\dots,k,$ and 
$|\phi_{1,0}|\le s_0^{-\eta},$
 given by
\begin{equation}\label{w0}
(w(y,s_0),\partial_sw(y,s_0))=\sum_{i=1}^k (-1)^i \kappa^*\left(\hat d_i(s_0, \phi_{1,0}),\nu_{i,0},y
\right), %\mbox{ with }|\nu_{i,0}|\le s_0^{-\eta},
\end{equation}
where 
\begin{align}
\hat d_i(s_0, \phi_{1,0})= -\tanh (\bar \zeta_i(s_0)+\zeta_0+\frac{p-1}{2} \phi_{1,0}),\label{v14}
\end{align}
$\bar \zeta_i(s_0)$ is defined in \eqref{solpart}, $\gamma_i$ is defined by
\begin{equation}\label{defgi}
\ds \gamma_i = (p-1) \left(-i + \frac{k+1}{2} \right),
\end{equation}
and the constant  $\eta>0$ will be defined later in \eqref{defeta}.

\medskip

 In comparison 
with the unperturbed case treated in \cite{CZcpam13},  we have one more parameter here ($\phi_{1,0}$), and this 
parameter is precisely the one that will allow us to prescribe the center of mass. Note also that in \cite{CZcpam13}, the corresponding construction was done with 
$\zeta_0=0$. Accordingly  estimate \eqref{v14} in \cite{CZcpam13} was satisfied with $\zeta_0=0$ and 
$\phi_{1,0}=0$. Here lays a major difference between our approach and that of \cite{CZcpam13}, in the sense that we construct our solution in relation with $(\bar \zeta_i(s_0)+\zeta_0)_i$, which is a particular solution of 
\eqref{eq:tl}, whereas in \cite{CZcpam13}, the construction is done only for  $\zeta_0=0$.

\medskip

 Such a solution will be denoted by
 $w(s_0,(\nu_{i,0})_i,\phi_{1,0},y,s)$, or when there is no ambiguity, by $w(y,s)$ or $w(s)$ for short. We will show that when $s_0$ is fixed large enough, we can fine-tune the parameters 
$$\nu_{i,0} \in \Big[-s_0^{-\frac 12 - |\gamma_i|},s_0^{-\frac 12 - |\gamma_i|}\Big]\quad
 (\textrm{for}\ \  i=1,\dots,k)\qquad
\textrm{
and}\qquad
\phi_{1,0}\in \Big[-s_0^{ - \eta},s_0^{-\eta}\Big],$$
so that the solution $w(s_0,(\nu_{i,0})_i,\phi_{1,0},y,s)$ %$w(s_0,(\nu_{i,0})_i,y,s)$ %(or $w(y,s)$ for short)
will be decomposed as a sum of $k$ decoupled solitons. This is the aim of the section.\\

%
%
%\begin{nb}
%............................................................................\\
%............................................................................\\
%
%\end{nb}

%%%%%%%%%%%%%%%%%%%%%%%%%%%%%%%%%%%%%%%%%%%%%%%%%%%%%%%%%%%%%%%%%%%%%%
%%%%%%%%%%%%%%%%%%%%%%%%%%%%%%%%%%%%%%%%%%%%%%%%%%%%%%%%%%%%%%%%%%%%%%

%%%%%%%%%%%%%%%%%%%%%%%%%%%%%%%%%%%%%%%%%%%%%%%%%%%%%%%%%%%%%%%%%%%%
\begin{prop}[A multi-soliton solution in the $w(y,s)$ setting]\label{propw} For any integer $k\ge 2$, and $\zeta_0 \in  {\m R}$
there exist $s_0>0$, $\nu_{i,0}\in\m R$ for $i=1,\dots,k$ and $\phi_{1,0}\in\m R$ such that equation \eqref{eq:nlw_w} with initial data (at $s=s_0$) given by \eqref{w0} is defined for all $(y,s)\in(-1,1)\times [s_0, \infty)$, satisfies $(w(s), \ps w(s))\in\H$ for all $s\ge s_0$, and
\begin{equation}\label{cprofile}
\left\|\vc{w(s)}{\ps w(s)} - \vc{\ds\sum_{i=1}^{k} (-1)^{i+1}\kappa(d_i(s))}0\right\|_{\H} \to 0\mbox{ as }s\to \infty,
\end{equation}
for some continuous $d_i(s) = -\tanh \zeta_i(s)$ satisfying
\begin{equation}\label{equid}
\zeta_i(s) -\bar \zeta_i(s)\to\zeta_0\mbox{ as }s\to\infty\mbox{ for }i=1,\dots,k
\end{equation}
where the $\bar\zeta_i(s)$ are introduced in \eqref{solpart}.
\end{prop}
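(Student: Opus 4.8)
The plan is to follow the two-step dynamical-systems / topological scheme of \cite{CZcpam13} and \cite{MZisol10}, adapting it to carry along the center-of-mass parameter. First I would look for the solution in the form of a modulated sum of generalized solitons plus a remainder,
$$
\vc{w(y,s)}{\partial_s w(y,s)} = \sum_{i=1}^{k} (-1)^i \kappa^*\bigl(d_i(s),\nu_i(s),y\bigr) + \vc{q_1(y,s)}{q_2(y,s)},
$$
where the time-dependent parameters $d_i(s)=-\tanh\zeta_i(s)$ and $\nu_i(s)$ are fixed by imposing the $2k$ orthogonality conditions $\Pi_0^{d_i}(q)=\Pi_1^{d_i}(q)=0$ for $i=1,\dots,k$. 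The implicit function theorem guarantees that this decomposition is well defined and that the parameters depend smoothly on $s$, as long as the solution stays close in $\H$ to a sum of $k$ decoupled solitons with well-separated $d_i$; at $s=s_0$ the decomposition is read off directly from the initial data \eqref{w0}.

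Then I would project equation \eqref{eq:nlw_w} to derive the modulation equations. Differentiating the orthogonality conditions in $s$ and using the properties of $W_0,W_1$ (cf.\ \eqref{defWl2-0}--\eqref{eqWl1-0}) yields a closed system: (i) the relative positions $\zeta_{i+1}-\zeta_i$ obey the Toda-type system \eqref{eq:tl} up to small errors, hence are attracted to $\bar\zeta_{i+1}(s)-\bar\zeta_i(s)$; (ii) each $\nu_i(s)$ is governed by an equation whose linear part is unstable, so these are the genuinely unstable directions; (iii) the collective center-of-mass coordinate $\phi_1(s)$, measuring the deviation of $\tfrac1k\sum_i\zeta_i(s)$ from the target $\zeta_0$ (whose vanishing as $s\to\infty$ is exactly \eqref{equid}), obeys a slow equation that must be controlled by hand; and (iv) the remainder $q(s)$ satisfies an energy estimate which, thanks to the coercivity of the linearized energy on the subspace cut out by the orthogonality conditions, keeps $\|q(s)\|_\H$ small and slaved to the parameters. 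The perturbation terms $f$ and $g$ enter only through error terms, and here assumption $(H)$ is essential: after the similarity-variables rescaling the $f$-contribution is of size $\sim |w|^p(\log)^{-\alpha}\sim s^{-\alpha}$ with $\alpha>1$, while the $g$-contribution gains the factor $e^{-\frac{2ps}{p-1}}$ and is therefore exponentially small, so that both are lower-order and do not alter the leading dynamics.

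With the modulation equations in hand, I would set up a shrinking trap set $V_A(s)$ constraining $\|q(s)\|_\H$, the relative positions, the $\nu_i(s)$, and $\phi_1(s)$ by suitable decaying bounds. The stable directions — the remainder and the relative positions — are shown to be strictly interior, since the energy estimate and the attractivity of the Toda flow force them back inside whenever they touch their bounds. This reduces the whole problem to controlling the $k+1$ a priori uncontrolled parameters $(\nu_1(s),\dots,\nu_k(s),\phi_1(s))\in\mathbb{R}^{k+1}$, for which one establishes transversality: on the portion of $\partial V_A(s)$ where one of these components saturates its bound, the corresponding ODE is strictly outgoing. A topological (index-theoretic, Brouwer no-retraction) argument, run over the parameter box $|\nu_{i,0}|\le s_0^{-\frac12-|\gamma_i|}$, $|\phi_{1,0}|\le s_0^{-\eta}$, then produces a choice of initial parameters for which the solution never exits $V_A(s)$; letting the bounds decay gives \eqref{cprofile} and, reading off the center of mass, \eqref{equid}.

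The main obstacle is precisely step (iii), the control of $\phi_1$. In \cite{CZcpam13} the center of mass is prescribed for free by applying a Lorentz boost, but \eqref{PNLW} is not Lorentz invariant, so this symmetry is unavailable. The new feature is therefore to promote the center of mass to an independent shooting parameter $\phi_{1,0}$ and to add the scalar $\phi_1$ to the list of unstable directions, raising the dimension of the topological argument from $k$ to $k+1$. The delicate points will be to verify that $\phi_1$ genuinely behaves as a controllable (outgoing) direction on the boundary, and that the symmetry-breaking errors from $f$ and $g$, together with the logarithmically drifting background $\bar\zeta_i(s)$, do not contaminate its slow dynamics; this is exactly where assumption $(H)$ and the careful choice of the exponent $\eta$ in the admissible range of $\phi_{1,0}$ do the work.
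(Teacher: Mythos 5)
Your proposal is correct and follows essentially the same route as the paper: modulation around a sum of generalized solitons $\kappa^*(d_i,\nu_i)$ with the orthogonality conditions $\Pi_l^{d_i^*}(q)=0$, linearization of the Toda-type system around $(\bar\zeta_i(s)+\zeta_0)_i$, a shrinking trap set $\q V(s)$, and a Brouwer no-retraction argument over the $(k+1)$-dimensional parameter box in $((\nu_{i,0})_i,\phi_{1,0})$, with the center of mass promoted to an extra shooting parameter exactly as the paper does. The one point you flag as delicate --- that $\phi_1$ is genuinely outgoing at the boundary --- is settled in the paper not by any instability (indeed $m_1=0$, so $\phi_1$ is neutral and satisfies $|\dot\phi_1|\le \frac{\eta}{2}s^{-1-\eta}$) but by the observation that the trap boundary $\pm s^{-\eta}$ shrinks strictly faster than $\phi_1$ can drift, which is what yields the required transversality.
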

%%%%%%%%%%%%%%%%%%%%%%%%%%%%%%%%%%%%%%%%%%%%%%%%%%%%%%%%%%%%%%%%%%%%%%
%%%%%%%%%%%%%%%%%%%%%%%%%%%%%%%%%%%%%%%%%%%%%%%%%%%%%%%%%%%%%%%%%%%%%%

\begin{nb}
Note from \eqref{w0} that initial data are in $H^1\times L^2(-1,1)$.
Going back to the $u(x,t)$ formulation, we see that initial data is also in $H^1\times L^2(-1,1)$ of the initial section of the backward light-cone. Therefore, from the solution to the Cauchy-problem in light-cones, we see that the solution stays in $H^1\times L^2$ of any section.
\end{nb}

\bigskip

As one can see from \eqref{w0}, at the initial time $s=s_0$, $w(y,s_0)$ is a pure sum of solitons. From the continuity of the flow associated with equation \eqref{eq:nlw_w} in $\H$ (this continuity comes from the continuity of the flow associated with equation \eqref{PNLW} in $H^1\times L^2$ of sections of backward light-cones), $w(y,s)$ will stay close to a sum of solitons, at least for a short time after $s_0$. In fact, we can do better, and impose some orthogonality conditions, killing the zero and expanding directions of the linearized operator of equation \eqref{PNLW} around the sum of solitons. The following modulation technique from Merle and Zaag in \cite{MZisol10} is crucial for that.
%%%%%%%%%%%%%%%%%%%%%%%%%%%%%%%%%%%%%%%%%%%%%%%%%%%%%%%%%%%%%%%%%%%%%%%%%%
%%%%%%%%%%%%%%%%%%%%%%%%%%%%%%%%%%%%%%%%%%%%%%%%%%%%%%%%%%%%%%%%%%%%%%%%%%
\begin{prop}[A modulation technique; Proposition 3.1 of \cite{MZisol10}]\label{lemode0}
For all $A\ge 1$, there exist $E_0(A)>0$ and $\epsilon_0(A)>0$ such that for all  $E\ge E_0$ and $\epsilon\le \epsilon_0$,  if $v\in \H$ and for all $i=1,\dots,k$, $(\hat d_i,\hat \nu_i)\in(-1,1)\times \m R$ are such that
\begin{equation*}%\label{condnu}
-1+\frac 1A \le \frac{\hat \nu_i}{1-|\hat d_i|}\le A,\;\;
\hat \zeta_{i+1}^*-\hat \zeta_i^*\ge E\mbox{ and }\|\hat q\|_{\H}\le \epsilon
\end{equation*}
where $\hat q = v-\ds\sum_{j=1}^{\k}(-1)^j \kappa^*(\hat d_j,\hat \nu_j)$ and $\ds{\hat d_i^*=\frac{\hat d_i}{1+\hat \nu_i} = -\tanh \hat \zeta_i^*}$, then, there exist $(d_i,\nu_i)$ such that for all $i=1,\dots,\k$ and $l=0,1$,
\begin{enumerate}
\item $\pp_l^{d_i^*}(q)=0$ where $q:=v-\sum_{j=1}^{\k}(-1)^j \kappa^*(d_j,\nu_j)$,%\label{ortho}\\
\item $\ds \left| \frac{\nu_i}{1-|d_i|}- \frac{\hat \nu_i}{1-|\hat d_i|}\right|+|\zeta_i^*-\hat \zeta_i^*|\le C(A)\|\hat q\|_{\H}\le C(A)\epsilon$,%\label{proche}\\
\item $\ds -1+\frac 1{2A} \le \frac{\nu_i}{1-|d_i|}\le A+1$,
$\ds \zeta_{i+1}^*-\zeta_i^*\ge \frac E2$ and $\|q\|_{\H} \le C(A)\epsilon$,%\label{condnu1}
\end{enumerate}
where $d_i^*=\frac{d_i}{1+\nu_i} = -\tanh \zeta_i^*$.
\end{prop}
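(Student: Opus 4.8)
The plan is to prove this as a standard modulation statement via the implicit function theorem, solving the $2k$ orthogonality equations for the $2k$ modulation parameters. For each $i$ the natural coordinates are $\zeta_i^*$ (through $d_i^*=-\tanh\zeta_i^*$) and $\mu_i:=\nu_i/(1-|d_i|)$, since these are exactly the quantities measured in the claimed bounds (2)--(3); the unknowns are thus $(\zeta_i^*,\mu_i)_{1\le i\le k}$ and the equations are the functionals $G_{i,l}(d,\nu):=\pp_l^{d_i^*}(q)$ for $l=0,1$, where $q=v-\sum_j(-1)^j\kappa^*(d_j,\nu_j)$.

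First I would record the value of $G$ at the reference configuration $(\hat d_j,\hat \nu_j)$, where $q=\hat q$: there $G_{i,l}=\pp_l^{\hat d_i^*}(\hat q)=\phi(W_l(\hat d_i^*),\hat q)$, which is $O(\|\hat q\|_\H)=O(\e)$ by Cauchy--Schwarz, the functions $W_l$ lying in $\H$ with controlled norm. So $G$ nearly vanishes at the reference point, and everything reduces to the uniform invertibility of its differential in the modulation parameters near that point.

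The core is the Jacobian computation. Differentiating $G_{i,l}=\phi(W_l(d_i^*),q)$ with respect to the $j$-th modulation parameters yields the pairing $-(-1)^j\phi\bigl(W_l(d_i^*),\partial_j\kappa^*(d_j,\nu_j)\bigr)$, coming from the $j$-th soliton sitting inside $q$; when $j=i$ there is an extra term $\phi(\partial_i W_l(d_i^*),q)$ which is $O(\e)$ because it contracts against the small remainder and can be absorbed. The algebraic heart is that the two parameter-derivatives of $\kappa^*$ span, to leading order, the same plane as the model directions $F_0(d_j^*)$ and $F_1(d_j^*)$, and that $W_0,W_1$ are constructed via \eqref{eqWl1-0} and the stated normalization precisely so that the $2\times2$ pairing matrix $\bigl(\phi(W_l(d_j^*),F_{l'}(d_j^*))\bigr)_{l,l'}$ is invertible (close to a nonzero diagonal matrix). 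This makes each diagonal block uniformly invertible. For the off-diagonal blocks $i\ne j$ I would use the separation $\hat\zeta_{i+1}^*-\hat\zeta_i^*\ge E$: since $W_l(d_i^*)$ and $\partial_j\kappa^*(d_j,\nu_j)$ are localized around well-separated centers, their pairing decays exponentially in $E$. Hence for $E\ge E_0(A)$ the full $2k\times 2k$ differential is a small off-diagonal perturbation of an invertible block-diagonal matrix, so it is invertible with uniformly bounded inverse.

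With uniform invertibility in hand, a quantitative implicit function theorem (a Newton/fixed-point scheme on the ball of radius $C(A)\e$ about the reference parameters) produces $(d_i,\nu_i)$ with $G_{i,l}=0$ and the Lipschitz bound (2); feeding (2) back into the hypotheses yields (3), the corrected ratios and gaps staying in their enlarged ranges while $\|q\|_\H\le C(A)\e$ since $q$ differs from $\hat q$ only by the $O(\e)$ change of the solitons. The main obstacle I anticipate is making the Jacobian estimate uniform in $A$ and $E$ simultaneously: one must check that the exponential off-diagonal decay beats the (possibly $A$-dependent) norm of the inverse diagonal blocks, so that single thresholds $E_0(A)$ and $\e_0(A)$ work across all admissible configurations, and that the localization and duality estimates for $W_l$ and $\partial_j\kappa^*$ are genuinely uniform over the parameter ranges allowed by the hypotheses.
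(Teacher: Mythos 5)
This proposition is not proved in the paper at all: it is imported verbatim as Proposition 3.1 of \cite{MZisol10}, and the proof there is exactly the quantitative implicit-function-theorem modulation argument you outline (near-vanishing of the orthogonality functionals at the reference configuration, invertible $2\times2$ diagonal blocks coming from the normalization of $W_l$ against $F_{l'}$ and the sign/size of the pairings with $\partial_d\kappa^*,\partial_\nu\kappa^*$, and off-diagonal blocks exponentially small in the separation $E$). Your proposal is therefore correct and takes essentially the same route as the cited proof.
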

%%%%%%%%%%%%%%%%%%%%%%%%%%%%%%%%%%%%%%%%%%%%%%%%%%%%%%%%%%%%%%%%%%%%%%%%%%
%%%%%%%%%%%%%%%%%%%%%%%%%%%%%%%%%%%%%%%%%%%%%%%%%%%%%%%%%%%%%%%%%%%%%%%%%%

Let us apply
 this proposition with $v=(w(y,s_0),\partial_s w(y,s_0))$ defined in  \eqref{w0}, $\hat d_i=\hat d_i(s_0, \phi_{1,0})$  defined in \eqref{v14} and $\hat \nu_i=\nu_{i,0}$. Clearly, we have $\hat q=0$. Then, from \eqref{w0}, \eqref{solpart} and straightforward calculations, we see that
\[
\frac{|\hat \nu_i|}{1-|\hat d_i|}\le \frac C{\sqrt{s_0}} \quad \text{and} \quad \hat \zeta_{i+1}^*-\hat \zeta_i^*\ge \frac{p-1}4 \log s_0
\]
for $s_0$ large enough. Therefore, Proposition \ref{lemode0} applies with $A=2$ and from the continuity of the flow associated with equation \eqref{eq:nlw_w} in $\H$, we have a maximal $\bar s=\bar s(s_0,(\nu_{i,0})_i, \phi_{1,0})>s_0$ such that $w$ exists for all time $s\in [s_0, \bar s)$ and $w$ can be modulated
in the sense that
\begin{equation}\label{defq}
(w(y,s),\partial_s w(y,s) ) = \sum_{i=1}^k (-1)^i \kappa^*(d_i(s), \nu_i(s))+q(y,s)
\end{equation}
where the parameters $d_i(s)$ and $\nu_i(s)$ are such that for all $s\in[s_0, \bar s]$,
\[
\pp_l^{d_i^*(s)}(q(s)) =0,\;\;\forall l=0,1,\;\;i=1,\dots,k
\]
and
\begin{equation}\label{conmod}
\frac{|\nu_i(s)|}{1-|d_i(s)|}\le s_0^{-1/4},\;\;
\zeta_{i+1}^*(s)-\zeta_i^*(s)\ge \frac{(p-1)}8 \log s_0\quad \text{and} \quad \|q(s)\|_{\H}\le \frac 1{\sqrt{s_0}},
\end{equation}
where $\zeta_i^*(s)=- \arg\tanh \big(\frac{d_i(s)}{1+\nu_i(s)}\big)$.

\medskip

Two cases then arise:

- either $\bar s(s_0,(\nu_{i,0})_i, \phi_{1,0})=+\infty$;

- or $\bar s(s_0,(\nu_{i,0})_i, \phi_{1,0})<+\infty$ and one of the $\le$ symbols in \eqref{conmod} is a $=$.

At this stage, we see that controlling the solution $w(s)\in \q H$ is equivalent to controlling $q\in \q H$, $(d_i(s))_i\in(-1,1)^k$ and $(\nu_i(s))_i\in\m R^k$. Introducing
\begin{equation}\label{defJ}
\zeta_i(s)=- \arg\tanh (d_i(s)), \qquad
J = \sum_{i=1}^{k-1} e^{-\frac{2}{p-1} (\zeta_{i+1} - \zeta_{i})}, \quad \bar J = \sum_{i=1}^k \frac{|\nu_i|}{1-d_i^2}, 
%\quad \hat J = \sum_{i=1}^k e^{- \frac{\bar p}{p-1}(\zeta_i - \zeta_{i-1})}
\end{equation}
we  project equation \eqref{eq:nlw_w} to derive 
the following estimates: 
%we recall from \cite{MZisol10,MZajm11} %differential and integral equations
%the dynamical estimates satisfied by those components:
%%%%%%%%%%%%%%%%%%%%%%%%%%%%%%%%%%%%%%%%%%%%%%%%%%%%%%%%%%%%%%%%%%%%%
%%%%%%%%%%%%%%%%%%%%%%%%%%%%%%%%%%%%%%%%%%%%%%%%%%%%%%%%%%%%%%%%%%%%%
\begin{prop}[Dynamics of the parameters]\label{propdyn}
There exists $\delta>0$
such that for $s_0$ large enough and for all $s\in[s_0,\bar s)$, we have
\begin{align}
\frac{| \dot \nu_i - \nu_i |}{1-d_i^2} & \le C  \left( \| q \|_{\q H}^2 + J + \| q \|_{\q H} \bar J\right)+\frac{C}{s^{\alpha}},
\label{est:nu}
\end{align}
\begin{align}
\left|\frac {\dot \zeta_i }{c_1} - (e^{-\frac{2}{p-1}  (\zeta_{i}-\zeta_{i-1})} - e^{-\frac{2}{p-1}  (\zeta_{i+1}-\zeta_{i})} )\right|  \le C(\| q \|_{\q H}^2 + (J +\| q \|_{\q H}) \bar J + J^{1+\delta})+\frac{C}{s^{\alpha}}, \label{est:zeta}
 \end{align}
 \begin{align}
  \qquad \qquad  \| q(s) \|_{\q H}^2 & \le C e^{-\delta(s-s_0)} \| q(s_0) \|_{\q H}^2 + C  J^{\bar p}+\frac{C}{s^{\alpha}}, \qquad \qquad  \qquad\label{est:q}
\end{align}
with
\begin{equation}\label{defpb}
 \bar p =
\begin{cases}
p & \text{ if } p <2, \\
2 - 1/100 & \text{ if } p =2, \\
2 & \text{ if } p >2,
\end{cases}
\end{equation}
where $\zeta_i(s) = - \arg\tanh d_i(s)$, $c_1=c_1(p)>0$, $J$  and $\bar J,$  are introduced in \eqref{defJ}.
\end{prop}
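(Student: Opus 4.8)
The plan is to substitute the modulated decomposition \eqref{defq} into equation \eqref{eq:nlw_w} and to read off, on one hand, the modulation ODEs for the parameters $(d_i,\nu_i)_i$, and on the other hand an evolution equation for the remainder $q$. Writing $\varphi(s)=\sum_{i=1}^k(-1)^i\kappa^*(d_i(s),\nu_i(s))$ for the soliton part and inserting $(w,\ps w)=\varphi+q$ into \eqref{eq:nlw_w}, one obtains a first order system of the schematic form
\begin{equation*}
\ps q = L_{d}\, q - \sum_{i=1}^k\l(\dot d_i\,\partial_{d}\kappa^*(d_i,\nu_i)+\dot\nu_i\,\partial_{\nu}\kappa^*(d_i,\nu_i)\r) + R(q) + \mathrm{Int} + \mathcal P,
\end{equation*}
where $L_{d}$ is the operator obtained by linearizing \eqref{eq:nlw_w} around $\varphi$ (built from $\q L$), $R(q)$ gathers the genuinely nonlinear contributions (at least quadratic in $q$), $\mathrm{Int}$ collects the soliton--soliton interaction terms produced by expanding the nonlinearity $|w|^{p-1}w$ around $\varphi$, and $\mathcal P$ is the perturbation coming from the $f$ and $g$ terms in \eqref{eq:nlw_w}. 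Because the solitons are well separated by \eqref{conmod}, $\mathrm{Int}$ is at leading order of nearest-neighbour Toda type and is controlled by $J$.

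To extract the parameter dynamics I would differentiate the orthogonality relations $\pp_l^{d_i^*(s)}(q(s))=0$ in $s$. Expanding $\frac{d}{ds}\pp_l^{d_i^*}(q)=0$ and substituting $\ps q$ from the system above yields, for each $i$ and $l=0,1$, a scalar identity; collecting these $2k$ identities gives a linear system for $(\dot d_i,\dot\nu_i)_i$ whose matrix is, by the normalization in \eqref{eqWl1-0} and the nondegeneracy recorded in \cite{MZajm11}, a small perturbation of an invertible block-diagonal matrix. Inverting it expresses $\dot\nu_i$ and $\dot\zeta_i$ through the projections of $R$, $\mathrm{Int}$ and $\mathcal P$ against the $W_l(d_i^*)$. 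The spectral facts from \cite{MZajm11,MZisol10} — that the $\nu$-direction is the expanding ($\lambda=1$) mode and the $\zeta$-direction the neutral ($\lambda=0$) mode of $L_{d}$ — are precisely what produce the normal forms $\dot\nu_i-\nu_i$ in \eqref{est:nu} and $\dot\zeta_i/c_1=e^{-\frac2{p-1}(\zeta_i-\zeta_{i-1})}-e^{-\frac2{p-1}(\zeta_{i+1}-\zeta_i)}$ in \eqref{est:zeta}, the interaction integrals fixing the constant $c_1$. The remaining contributions $\|q\|_{\q H}^2$, $J$, $\|q\|_{\q H}\bar J$ (and the refined $J^{1+\delta}$ in the $\zeta$ equation) are the projections of $R$, of $\mathrm{Int}$, and of the cross terms between $q$ and the $\nu_i$-corrections, estimated as in \cite{CZcpam13}.

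For \eqref{est:q} I would use the linearized energy functional $\mathcal E(q)$ attached to $L_{d}$ as a Lyapunov functional for $q$. The coercivity estimate of \cite{MZajm11}, valid modulo the finitely many neutral and expanding directions killed by the conditions $\pp_l^{d_i^*}(q)=0$, together with the large separation in \eqref{conmod}, gives $\mathcal E(q)\gtrsim\|q\|_{\q H}^2$ on the admissible subspace. Differentiating $\mathcal E(q)$ along the flow, the damping terms of \eqref{eq:nlw_w} produce a dissipative contribution $\le-\delta\|q\|_{\q H}^2$; the cross terms with the modulation are reabsorbed using the ODEs just obtained, the interaction enters as a source of order $J^{\bar p}$ with $\bar p$ as in \eqref{defpb} (the value reflecting the Hölder regularity of $u\mapsto|u|^{p-1}u$ near $0$), and $\mathcal P$ contributes $\mathcal O(s^{-\alpha})$. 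This gives a differential inequality $\frac{d}{ds}\mathcal E(q)\le-\delta\mathcal E(q)+CJ^{\bar p}+Cs^{-\alpha}$, and since $J$ and $s^{-\alpha}$ are essentially nonincreasing along the flow, a Duhamel integration yields \eqref{est:q}.

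The main obstacle — and the genuinely new part with respect to \cite{CZcpam13} — is controlling $\mathcal P$ under the weak assumption $(H)$, that is, showing that the $f$ and $g$ terms contribute only $\mathcal O(s^{-\alpha})$ both in the $\q H$-norm (for \eqref{est:q}) and after projection against $W_l$ (for \eqref{est:nu}--\eqref{est:zeta}). The decisive gain is the logarithmic one: since $u=e^{\frac{2s}{p-1}}w$, on the region where $w$ is of order one we have $\log(2+u^2)\sim\frac{4s}{p-1}$, whence $(H)$ gives $e^{-\frac{2ps}{p-1}}|f(e^{\frac{2s}{p-1}}w)|\lesssim \frac{|w|^p}{(\log(2+u^2))^{\alpha}}\sim\frac{C}{s^{\alpha}}$, while the $u$-dependent part of $g$ scales like $e^{-\frac{2ps}{p-1}}e^{\frac{(p+1)s}{p-1}}=e^{-s}$, and the gradient parts are equally small. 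The delicate point is the region where $u$ is small and the logarithmic denominator degenerates; there one must split the domain and use the bare prefactor $e^{-\frac{2ps}{p-1}}$, together with the $L^\infty$ bound on the solitons and the $\q H$-control of $q$, to recover exponential smallness, the two regimes then combining to the uniform $s^{-\alpha}$ bound. Carrying this splitting through the weighted $\rho$-integrals defining $\|\cdot\|_{\q H}$ and $\phi(\cdot,\cdot)$, uniformly in the modulation parameters, is the technical heart of the argument.
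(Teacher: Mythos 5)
Your proposal follows essentially the same route as the paper's proof in Appendix~\ref{appdyn}: modulation of the decomposition \eqref{defq} and projection of the $q$-equation onto the directions $W_l(d_i^*)$ for the parameter ODEs \eqref{est:nu}--\eqref{est:zeta}, a Lyapunov functional equivalent to $\|q\|_{\q H}^2$ closed by a differential inequality with sources $J^{\bar p}$ and $s^{-\alpha}$ for \eqref{est:q}, and, crucially, the very same treatment of the perturbation: splitting $(-1,1)$ according to whether $|K^*_1+q_1|$ exceeds $e^{-s/(p-1)}$ so as to use either the logarithmic gain $\log(2+u^2)\gtrsim s$ or the bare prefactor $e^{-\frac{2ps}{p-1}}$, together with the $e^{-s}$ scaling of the $g$-term. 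The only simplification worth flagging is that the energy identity alone dissipates only the $q_2$-component (it yields $-\frac{2}{p-1}\iint q_2^2\frac{\rho}{1-y^2}\,dy$, not $-\delta\|q\|_{\q H}^2$), so the paper must add the cross term $\eta\iint q_1q_2\rho\,dy$ to the functional to convert this partial damping into decay of the full norm --- a standard virial-type correction that your sketch implicitly assumes.
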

%%%%%%%%%%%%%%%%%%%%%%%%%%%%%%%%%%%%%%%%%%%%%%%%%%%%%%%%%%%%%%%%%%%%%
%%%%%%%%%%%%%%%%%%%%%%%%%%%%%%%%%%%%%%%%%%%%%%%%%%%%%%%%%%%%%%%%%%%%%
 %\textcolor{red}{a ecrire pv complete}
\begin{proof} As in the unperturbed case, this statement is devoted to understanding  the dynamics of equation \eqref{eq:nlw_w}
  near the sum of $k$ solitons.   Most of the estimates are the same as in the unperturbed case 
\eqref{NLW}  treated in \cite{CZcpam13} and \cite{MZisol10}
  and some others are  more delicate.  
%Also, note that in  the expression of  $J$   defined in  %\eqref{defJ}, 
%This factor makes it unnecessary for us to define the %expression $\hat J$  like it was the case  in  
   %\cite{CZcpam13}.
 For that reason, we leave the proof to Appendix \ref{appdyn}. 
\end{proof}

\begin{nb}
Let us mention that,  the estimates \eqref{est:nu}
 \eqref{est:zeta} and  \eqref{est:q}  are  similar to the ones obtained in  the unperturbed case treated 
in \cite{CZcpam13} except for the following:

 - The   presence of the additional term $\displaystyle{ \frac{C}{s^{\alpha}}}$ which is natural  to control the perturbation  terms related to $f$ and $g$.

-  We end up, in estimate    \eqref{est:q},  we have a new term  $ J^{\bar p}$ instead of  $\hat J^2$
 (where   
$\displaystyle{ \hat J = \sum_{i=1}^{k-1} e^{- \frac{\bar p}{p-1}(\zeta_{i+1} - \zeta_{i})}}$)  in the unperturbed case. 
This   is based  on the fact that all  the  norms on $\R^n$
 are equivalent.   
\end{nb}

\medskip

In order to prove Proposition \ref{propw},
our aim is to show the existence of a solution with 
$\| q(s) \|_{\q H}\to 0,$
$\zeta_i(s) -\bar \zeta_i(s)- \zeta_0\to 0$ and  $\bar J(s)\to 0$  as $s\to \infty.$  Hence, it is natural to do as in Section 2 in \cite{CZcpam13} and linearize system \eqref{est:zeta} around $(\bar \zeta_i(s)+\zeta_0)_i$ by introducing
\begin{equation}\label{defxi1}
 \xi_i(s) = \frac 2{p-1}(\zeta_i(s) - \bar \zeta_i(s)-\zeta_0).
 \end{equation}
This is reasonable, since we see from   \eqref{est:zeta} and \eqref{conmod} that 
 $( \zeta_i(s))_i$ satisfies  a perturbed version of the system \eqref{eq:tl} satisfied by $(\bar \zeta_i(s)+\zeta_0)_i$ and studied in  \cite{CZcpam13}.

 \medskip

Following \eqref{defxi1}, if $\boldsymbol{\xi}(s)=(\xi_1(s),\dots, \xi_k(s))$, then, as it was done in the unperturbed case   in \cite{CZcpam13},  by writing  a  Taylor expansion of \eqref{est:zeta},
%
%Note that the estimate \eqref{eqline1}  is obtained easily by combining  already more accurate than \eqref{eqlin1} which was obtained by
we obtain the following differential inequalities, for all $s\in [s_0, \bar s)$, and $i=1,\dots,k,$
\begin{equation}\label{eqlin1}
\left|\dot {\boldsymbol \xi} - \frac 1s M \boldsymbol \xi \right|\le \frac Cs |\boldsymbol\xi |^2+C(\| q \|_{\q H}^2 + (J +\| q \|_{\q H}) \bar J + J^{1+\delta})+\frac{C}{s^{\alpha}},
\end{equation}
where the self-adjoint $k\times k$ matrix $M = (m_{i,j})_{(i,j) \in \llbracket 1,k \rrbracket}$ is defined by 
\begin{equation}\label{defM}
m_{i,i-1}=\sigma_{i-1},\quad m_{i,i} = - (\sigma_{i-1} + \sigma_i),\quad m_{i,i+1}=\sigma_i,\qquad  m_{i,j}=0 \text{ if }|i-j|\ge 2,
% m_{ii} = - (\sigma_{i-1} + \sigma_i), \quad m_{ij} = \sigma_j \delta_{|i-j| =1}
%%\mbox{ with }\sigma_i = \frac{i(k-i)}{2}.
\end{equation}
%and $\sigma_i$ is defined by
with
\begin{equation}\label{defsi}
\sigma_i=\frac{i(k-i)}2.
\end{equation}
Note that  the matrix $M$ is diagonalizable, with real eigenvalues
 $(-m_i)_i$,  
defined by:
\begin{equation}\label{defmi}
-m_i \equiv  -\frac{i(i-1)}2,\mbox{ for } i=1,\dots,k
\end{equation}
and the associated eigenvectors $\boldsymbol{e}_i$ normalized for the $\ell^\infty$ norm.
Note that the kernel of $M$  is  spanned   by the vector   
\begin{equation}\label{385}
{e}_1 = {}^t(1,\dots,1).
\end{equation}

\bigskip

 It is then natural to work in the basis defined by its eigenvectors $(\boldsymbol e_i)_i$ and to introduce
$\boldsymbol{\phi}(s) = (\phi_1(s), \dots, \phi_k(s))$ defined by
\begin{equation}\label{defphi}
\boldsymbol \xi(s) = \sum_{i=1}^k \phi_i(s) \boldsymbol e_i.
\end{equation}
Note that if we project the differential inequalities \eqref{eqlin1} on the 
 eigenfunctions 
  $(\boldsymbol e_i)_i$  of $M$, then we trivially obtain the following:
\begin{cor}[Dynamics for $\phi_i$]\label{cordynphi}
 For all $s\in [s_0, \bar s)$,  and $i=1,\dots,k,$
\begin{equation}
\left| \dot \phi_i + \frac{m_i}{s} \phi_i \right| \le  \frac{C}{s} \sum_{j=1}^k \phi_j^2 + C \left( \| q \|_{\q H}^2 + (J +\| q \|_{\q H}) \bar J + J^{1+\delta} \right)+\frac{C}{s^{\alpha}}.
\label{est:phi}
\end{equation}
\end{cor}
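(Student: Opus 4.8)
The plan is to diagonalize: the inequality \eqref{est:phi} is obtained simply by expressing the vector inequality \eqref{eqlin1} in the eigenbasis $(\boldsymbol e_i)_i$ of $M$ and projecting onto each coordinate, using throughout that the dimension $k$ is fixed so that all norms on $\R^k$ are equivalent.

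First I would rewrite the left-hand side of \eqref{eqlin1} in that basis. Since $\boldsymbol e_j$ is an eigenvector of $M$ for the eigenvalue $-m_j$ (see \eqref{defmi}) and $\boldsymbol \xi(s) = \sum_{j=1}^k \phi_j(s)\boldsymbol e_j$ by \eqref{defphi}, with the $\boldsymbol e_j$ constant in $s$, differentiation gives $\dot{\boldsymbol\xi} = \sum_j \dot\phi_j \boldsymbol e_j$ while $\frac1s M\boldsymbol\xi = -\sum_j \frac{m_j}{s}\phi_j \boldsymbol e_j$. Therefore
\[
\dot{\boldsymbol\xi} - \frac1s M\boldsymbol\xi = \sum_{j=1}^k\left(\dot\phi_j + \frac{m_j}{s}\phi_j\right)\boldsymbol e_j .
\]

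Next I would extract the $i$-th component. As $M$ is self-adjoint with the distinct eigenvalues $-m_i$, the family $(\boldsymbol e_i)_i$ is a basis of $\R^k$ (in fact orthogonal), so for each fixed $i$ the map sending $\boldsymbol v = \sum_j a_j \boldsymbol e_j$ to its coefficient $a_i$ is a bounded linear functional whose norm depends only on the fixed basis, i.e.\ only on $k$. Applying it with $a_j = \dot\phi_j + \frac{m_j}{s}\phi_j$ and invoking the bound \eqref{eqlin1} on $|\dot{\boldsymbol\xi} - \frac1s M\boldsymbol\xi|$ yields
\[
\left|\dot\phi_i + \frac{m_i}{s}\phi_i\right| \le C\left(\frac1s|\boldsymbol\xi|^2 + \|q\|_{\q H}^2 + (J + \|q\|_{\q H})\bar J + J^{1+\delta} + \frac1{s^\alpha}\right).
\]

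Finally, by the same equivalence of norms on $\R^k$, $|\boldsymbol\xi|^2 = \big|\sum_j \phi_j \boldsymbol e_j\big|^2 \le C\sum_{j=1}^k \phi_j^2$, so the term $\frac1s|\boldsymbol\xi|^2$ becomes $\frac Cs \sum_j \phi_j^2$, which is exactly the leading term of \eqref{est:phi}; after relabeling constants the proof is complete. I do not expect any genuine obstacle here --- as the word ``trivially'' in the statement indicates, this is a direct diagonalization of the already-established \eqref{eqlin1}. The only point requiring a word of care is that the $\boldsymbol e_i$ are normalized in the $\ell^\infty$ norm rather than orthonormal, but since $k$ is fixed this merely affects the value of the universal constant $C$.
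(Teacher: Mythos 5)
Your proposal is correct and is exactly the paper's argument: the paper itself notes that the corollary follows ``trivially'' by writing the differential inequality \eqref{eqlin1} in the eigenbasis $(\boldsymbol e_i)_i$ of $M$, which is precisely your diagonalization-plus-projection step, with the equivalence of norms on $\m R^k$ absorbing the $\ell^\infty$-normalization of the $\boldsymbol e_i$ into the constant $C$. Nothing further is needed.
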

\begin{nb}
This corollary is  trivial. Indeed, we just  write the differential inequalities \eqref{eqlin1} on the basis  $(\boldsymbol e_i)_i$.
 Note that in  \cite{CZcpam13}, the authors prove  the following sharper version of \eqref{est:phi}:
% Note that,  we can improve these inequalities, 
%as this is proved in \cite{CZcpam13} in the unperturbed case. More precisely,  the following:
%for all $s\in [s_0, \bar s)$,  and $i=1,\dots,k,$
\begin{equation}
\left| \dot \phi_i + \frac{m_i}{s} \phi_i \right| \le  \frac{C}{s} \sum_{j=2}^k \phi_j^2 + C \left( \| q \|_{\q H}^2 + (J +\| q \|_{\q H}) \bar J + J^{1+\delta} \right)+\frac{C}{s^{\alpha}}.
\label{est:phib}
\end{equation}
Note that in \eqref{est:phib}, the index runs from $2$ to $k$, and not from $1$ to $k$ like in  \eqref{est:phi}. That improvement was crucial in   \cite{CZcpam13}, since it was required that 
$\phi_i(s)\to 0$ as $s\to \infty$,  only for $i\ge 2$,  and not  for $\phi_1(s)$.  By the way, the proof of 
\eqref{est:phib}  is far from being easy. Here, since we work with any 
$\zeta_0 \in \m R $ (see \eqref{v14})
and aim at prescribing the center of mass, we don't need to be that accurate, and from this point of view, our proof is more simple than the proof of \cite{CZcpam13}.

%
%
%Obviously,  the proof  of  the inequality  \eqref{est:phib} is far from easy and we need more work to deduce 
%this subtle version.   Let us note, in our proof, we use just the inequality \eqref{est:phi} unlike in   \cite{CZcpam13} where we need the  inequality \eqref{est:phib}. That's why once again  our strategy  is  more  
%simple than  the proof of    \cite{CZcpam13}.
\end{nb}

Note that thanks to all these changes of variables, controlling $w$ is equivalent to the control of $(q, \boldsymbol{\phi},(\nu_i)_i)$. 
Now, in order to control $w$ near multi-solitons, we introduce the following set:

%%%%%%%%%%%%%%%%%%%%%%%%%%%%%%%%%%%%%%%%%%%%%%%%%%%%%%%%%%%%%%%%%%%%%%%%
%%%%%%%%%%%%%%%%%%%%%%%%%%%%%%%%%%%%%%%%%%%%%%%%%%%%%%%%%%%%%%%%%%%%%%%%
\begin{defi}[Definition of a shrinking set for the parameters]\label{defVa}
We say that
$w(s)\in \q V(s)$ if and only if
\begin{equation}\label{defV}
\begin{array}{>{\ds}l}
s^{1/2+\eta} \| q \|_{\q H} \le 1,\\
 \quad \forall i =1, \dots, k, \quad s^{1/2 + |\gamma_i|} |\nu_i|  \le 1,
 \quad \text{and} \quad s^{\eta} |\phi_i |  \le 1,
\end{array},
\end{equation}
where
\begin{equation}\label{defeta}
\eta = \frac{1}{4} \min \left\{1,\delta, \frac{\bar p}{2} - \frac{1}{2},\frac{\alpha-1}2 \right\},
\end{equation}
$\delta>0$ is defined in Proposition \ref{propdyn}  and $\bar p$ is defined in \eqref{defpb}.
\end{defi}

\begin{nb}
In  \cite{CZcpam13}, the condition $ s^{\eta} |\phi_i(s) |  \le 1$ was required only for $i\ge 2$. When $i=1$, the approach of  \cite{CZcpam13}  requires only the smallness of $\phi_1(s)$ (namely that $|\phi_1(s)|\le s_0^{-\eta}$
with $s_0$ and not $s$) with no need to have   $  \phi_1(s) \to 0$ as $s\to \infty$. Here, we will get 
$  \phi_1(s) \to 0$  as $s\to \infty$, which is the key to prescribe the center of mass.
\end{nb}

\bigskip

%%%%%%%%%%%%%%%%%%%%%%%%%%%%%%%%%%%%%%%%%%%%%%%%%%%%%%%%%%%%%%%%%%%%%%%%
%%%%%%%%%%%%%%%%%%%%%%%%%%%%%%%%%%%%%%%%%%%%%%%%%%%%%%%%%%%%%%%%%%%%%%%%
From the existence of $\bar s$ (defined right before \eqref{defq}), we know that there is a maximal $s^*(s_0,(\nu_{i,0})_i, \phi_{1,0})\in [s_0, \bar s)$ such that for all $s\in[s_0, s^*)$, $w(s) \in \q V(s)$ and:\\
- either $s^*=+\infty$,\\
- or $s^*<+\infty$ and from continuity, $w(s^*) \in \partial \q V(s^*)$, in the sense that one $\le$ symbol in \eqref{defV} has to be replaced by the $=$ symbol.

\medskip

Our aim is to show that for $s_0$ large enough, one can find a parameter 
$\big((\nu_{i,0})_i,\phi_{1,0}\big)$ in $$\prod_{i=1}^k[-s_0^{-\frac 12 - |\gamma_i|},s_0^{-\frac 12 - |\gamma_i|}]  \times  \Big[-s_0^{ - \eta},s_0^{-\eta}\Big]$$
such that
\begin{equation}\label{aim}
s^*(s_0,(\nu_{i,0})_i, \phi_{1,0})=+\infty.
\end{equation}

With %the differential and integral inequalities in
 Proposition \ref{propdyn} and Corollary \ref{cordynphi} at hand, we are in a position to prove the following, which directly implies Proposition \ref{propw}:
%%%%%%%%%%%%%%%%%%%%%%%%%%%%%%%%%%%%%%%%%%%%%%%%%%%%%%%%%%%%%%%%%%%%%%
%%%%%%%%%%%%%%%%%%%%%%%%%%%%%%%%%%%%%%%%%%%%%%%%%%%%%%%%%%%%%%%%%%%%%%
\begin{prop}[A solution $w(y,s)\in \q V(s)$]\label{reducw} For $s_0$ large enough, there exists $$(\nu_{i,0})_i\in\prod_{i=1}^k[-s_0^{-\frac 12 - |\gamma_i|},s_0^{-\frac 12 - |\gamma_i|}]\qquad  \textrm{and}\qquad
\phi_{1,0}\in \Big[-s_0^{ - \eta},s_0^{-\eta}\Big]$$ such that equation \eqref{eq:nlw_w} with initial data (at $s=s_0$) given by \eqref{w0} is defined for all $(y,s)\in(-1,1)\times [s_0, \infty)$ and satisfies $w (s) \in \q V(s)$ for all $s \ge s_0$.
\end{prop}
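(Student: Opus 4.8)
The plan is to run a topological shooting argument in the $k+1$ parameters $\big((\nu_{i,0})_i,\phi_{1,0}\big)$, using Proposition \ref{propdyn} and Corollary \ref{cordynphi} to reduce the control of $w(s)\in\q V(s)$ (Definition \ref{defVa}) to the control of finitely many ``outgoing'' coordinates. First I would read off the coordinates at $s=s_0$. Since the datum \eqref{w0} is a pure sum of solitons, the modulation of Proposition \ref{lemode0} acts trivially (one has $\hat q=0$), so that $q(s_0)=0$, $\nu_i(s_0)=\nu_{i,0}$, and, by \eqref{defxi1}, $\boldsymbol\xi(s_0)=\phi_{1,0}\,(1,\dots,1)=\phi_{1,0}\,e_1$; hence $\phi_1(s_0)=\phi_{1,0}$ and $\phi_i(s_0)=0$ for $i\ge2$. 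In particular the datum lies in $\overline{\q V(s_0)}$ for every parameter in the prescribed box, with $q$ and $(\phi_i)_{i\ge2}$ starting strictly inside.

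The core of the proof is to split $\partial\q V(s)$ into controlled and outgoing faces. For the controlled faces I would show that on $\q V(s)$ the components $q$ and $(\phi_i)_{i\ge2}$ can never attain their constraints in \eqref{defV}, so that any finite exit time $s^*$ must be realized through some $\nu_i$ or through $\phi_1$. For $q$, I feed the bounds of $\q V(s)$ into \eqref{est:q}: there $J\lesssim 1/s$ (because $\zeta_{i+1}-\zeta_i=\tfrac{p-1}2\log s+O(s^{-\eta})$) and $q(s_0)=0$, whence $\|q(s)\|^2\lesssim J^{\bar p}+s^{-\alpha}\lesssim s^{-\bar p}+s^{-\alpha}$; the choice \eqref{defeta} (which forces $\eta<\tfrac{\bar p-1}2$ and $\eta<\tfrac{\alpha-1}2$) makes this $\ll s^{-1-2\eta}$, i.e. $s^{1/2+\eta}\|q(s)\|\to0$. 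For $\phi_i$ with $i\ge2$, computing $\tfrac{d}{ds}(s^{2\eta}\phi_i^2)$ from \eqref{est:phi} yields a linear contribution $(2\eta-2m_i)s^{-1}$ at the boundary value $s^{2\eta}\phi_i^2=1$, which is strictly negative since $m_i=\tfrac{i(i-1)}2\ge1>\eta$, and it dominates the forcing $O(s^{-1-\eta})$; hence $\phi_i$ stays strictly inside.

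For the $k+1$ remaining coordinates I would prove strictly \emph{outgoing} flux at the corresponding faces. For each $\nu_i$, estimate \eqref{est:nu} gives $\dot\nu_i=\nu_i+(1-d_i^2)\,O(\|q\|^2+J+\|q\|\bar J+s^{-\alpha})$, and since $1-d_i^2\sim s^{-|\gamma_i|}$ the growing linear part wins, so $\tfrac{d}{ds}(s^{1+2|\gamma_i|}\nu_i^2)>0$ whenever $s^{1+2|\gamma_i|}\nu_i^2=1$. The decisive new point is the neutral coordinate $\phi_1$, for which $m_1=0$ and the naive linear flux vanishes; here the weight in \eqref{defV} provides the instability: from \eqref{est:phi}, $\tfrac{d}{ds}(s^{2\eta}\phi_1^2)=2\eta\,s^{2\eta-1}\phi_1^2+2s^{2\eta}\phi_1\dot\phi_1$, and at the boundary $\phi_1^2=s^{-2\eta}$ the positive term $2\eta\,s^{-1}$ dominates the forcing $O(s^{-1-\eta})$, so $\phi_1$ is also outgoing. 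This is precisely the mechanism that lets us treat $\phi_1$ as an unstable mode and thereby \emph{prescribe the center of mass}, whereas \cite{CZcpam13} only needed $\phi_1$ to remain small; in particular the crude Corollary \ref{cordynphi} suffices and the sharper \eqref{est:phib} is not needed.

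Finally, rescaling the parameters to $(s_0^{1/2+|\gamma_i|}\nu_{i,0},s_0^{\eta}\phi_{1,0})$ ranging over $[-1,1]^{k+1}$, the controlled directions ensure that a finite $s^*$ is reached only through one of the $k+1$ outgoing coordinates, and the transversality above makes $s^*$ depend continuously on the parameters and forces immediate exit ($s^*=s_0$) when a parameter already lies on $\partial([-1,1]^{k+1})$. Consequently, the map sending the parameters to the rescaled outgoing coordinates at $s^*$, namely $\big((s^*)^{1/2+|\gamma_i|}\nu_i(s^*),(s^*)^{\eta}\phi_1(s^*)\big)$, would be a continuous retraction of $[-1,1]^{k+1}\simeq B^{k+1}$ onto its boundary $S^k$, equal to the identity there; this contradicts Brouwer's no-retraction theorem (the index-theory step announced in the introduction). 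Hence some parameter yields $s^*=+\infty$, i.e. $w(s)\in\q V(s)$ for all $s\ge s_0$, which is the assertion of Proposition \ref{reducw}. I expect the genuine obstacle to be the outgoing step: verifying the flux for the neutral mode $\phi_1$ and tracking the degenerate factors $1-d_i^2\sim s^{-|\gamma_i|}$ in the $\nu_i$ transversality; the rest follows the template of \cite{CZcpam13,MZisol10} once \eqref{defeta} is used to render every forcing term subdominant.
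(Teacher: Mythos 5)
Your proposal is correct and essentially reproduces the paper's own proof: the same identification of the initial modulation coordinates ($q(s_0)=0$, $\phi_i(s_0)=0$ for $i\ge 2$, $\phi_1(s_0)=\phi_{1,0}$), the same splitting of $\partial\q V(s)$ into controlled faces ($q$ and $\phi_i$, $i\ge 2$) and outgoing faces ($\nu_i$ and the neutral mode $\phi_1$, whose transversality through the weight $s^{-\eta}$ is precisely the paper's key new ingredient compared to \cite{CZcpam13}), and the same no-retraction/index-theory contradiction. The one point you should make explicit is that your domination claims on the $\phi$-faces require inserting the \emph{improved} bound $\|q(s)\|_{\q H}\lesssim s^{-\bar p/2}+s^{-\alpha/2}$ (which you derived from \eqref{est:q}) into the term $\|q\|_{\q H}\bar J$ of \eqref{est:phi}, so that the forcing contribution to $\frac{d}{ds}\bigl(s^{2\eta}\phi_i^2\bigr)$ at the boundary is $O(s^{-1-\eta})$ rather than merely $O(s^{-1})$ with an uncontrolled constant — exactly what the paper does in \eqref{est:phi2}, and what your stated orders implicitly assume.
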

%%%%%%%%%%%%%%%%%%%%%%%%%%%%%%%%%%%%%%%%%%%%%%%%%%%%%%%%%%%%%%%%%%%%%%
%%%%%%%%%%%%%%%%%%%%%%%%%%%%%%%%%%%%%%%%%%%%%%%%%%%%%%%%%%%%%%%%%%%%%%
\begin{proof}[Proof of Proposition \ref{reducw}]
Let $s_0$ be large enough.
%For simplicity in the exposition, we work with rescaled functions.
Define $\m B$ (resp. $\m S$) the unit ball (resp. sphere) in $(\m R^{k+1}, \ell^\infty)$, and the rescaling function
\begin{equation}\label{rescl}
\Gamma_s:{}^t (\boldsymbol{ \nu_1}, \dots, \boldsymbol{ \nu_k},\boldsymbol{ \phi_{1,0}}) \mapsto {}^t ( s^{-1/2-|\gamma_1|} \boldsymbol{\nu_1}, \dots,  s^{-1/2-|\gamma_k|} \boldsymbol{\nu_k}, s^{-\eta}\boldsymbol{\phi_{1,0}}).
\end{equation}
For all $( ( \boldsymbol{\nu_i} )_i,\boldsymbol{\phi_{1,0}}) 
f \in \m B$, we consider the solution $w(s_0, ( \boldsymbol{\nu_i} )_i,\boldsymbol{\phi_{1,0}},y,s)$ (or $w(y,s)$ for short) to  equation \eqref{eq:nlw_w}, with initial condition at time $s_0$ given by \eqref{w0} with
\[
{}^t\big((\nu_{i,0})_i,\phi_{1,0}\big) = \Gamma_{s_0}({}^t( (\boldsymbol{ \nu_i} )_i,\boldsymbol{\phi_{1,0}})).
\]
As we showed after the statement of Proposition \ref{lemode0}, $w(y,s)$ can be modulated (up to some time $\bar s = \bar s(s_0,  (\boldsymbol{ \nu_i} )_i,\boldsymbol{\phi_{1,0}})>s_0$) into a triplet $(q(s), (d_i(s))_i,(\nu_i(s))_i)$. 
From the uniqueness of such a decomposition (which is a consequence of the application of the implicit function theorem, see the proof of \cite[Proposition 3.1]{MZisol10}), we get
\begin{equation}\label{initmod}
q(s_0) =0, \quad d_i(s_0)=\hat{ d_i}(s_0,\phi_{1,0})\quad \text{and} \quad
\nu_i(s_0) = \Big[ \Gamma_{s_0}({}^t( ( \boldsymbol{\nu_i} )_i,\boldsymbol{\phi_{1,0}}))\Big]_i,\quad \textrm{for}\ \ i=1,\dots,k,
\end{equation}
where $\hat{ d_i}(s_0,\phi_{1,0})$ is defined in \eqref{v14}.

\medskip

\noindent
From \eqref{initmod}, the definitions \eqref{defxi1} of $\xi_i(s)$ and \eqref{v14} of 
 $\hat{ d_i}(s_0,\phi_{1,0})$ , we see that 
\begin{equation}\label{v22J}
\forall i=1,\dots,k, \quad  \xi_i(s_0)=\phi_{1,0}.
\end{equation}
In other words, $\big(\xi_i(s_0)\big)_i$ is in the kernel of the matrix $M$
defined in \eqref{defM} (see \eqref{385}).  In particular
\begin{equation}\label{initmod2}
\forall i=2,\dots,k, \quad  \phi_i(s_0)=0 \qquad \text{and} \quad \phi_1(s_0)=\phi_{1,0}.
\end{equation}

\medskip

Performing the change of variables \eqref{defxi1} and \eqref{defphi}, we reduce the control of $w(s)$ to the control of $(q(s), (\nu_i(s))_i, (\phi_i(s))_i)$.
Defining
\begin{equation}\label{defN}
N(( \boldsymbol{\nu_i} )_i,\boldsymbol{\phi_{1,0}}, s) := \max \left\{ s^{1/2+\eta} \| q(s) \|_{\q H} ,\ \sup_{i\ge 1} s^{1/2+|\gamma_i|} |\nu_i(s)|,\ \sup_{i \ge 1} s^\eta |\phi_i(s)| \right\},
\end{equation}
we see that $\q V(s)$ (Definition introduced in  \ref{defVa}) is simply the unit ball of the norm $N(( \boldsymbol{\nu_i} )_i,\boldsymbol{ \phi_{1,0}},s)$.

As asserted just before  \eqref{aim}, we aim at finding 
$\big((\boldsymbol{\nu_{i}})_i,\boldsymbol{\phi_{1,0}}\big)$
 so that the associated solution of equation 
\eqref{eq:nlw_w}
 $w \in \q C([s_0,\infty), \q H)$ is globally defined for forward times and
\[ \forall s \ge s_0, \quad N(( \boldsymbol{\nu_i} )_i,\boldsymbol{\phi_{1,0}},s) \le 1, \quad \text{i.e.} \quad w(s) \in \q V(s). \]
We argue by contradiction.  Assume that the conclusion of Proposition \ref{reducw} does not hold.
In particular, for all
$\big((\boldsymbol{\nu_{i}})_i,\boldsymbol{\phi_{1,0}}\big),$
the exit time $s^*\big(s_0,(\boldsymbol{\nu_{i}})_i,\boldsymbol{\phi_{1,0}}\big)$ is finite, where
\begin{equation} \label{exit}
s^*\big(s_0,(\boldsymbol{\nu_{i}})_i,\boldsymbol{\phi_{1,0}}\big)= \sup \{ s \ge s_0 \;|\; \forall \tau \in [s_0,s], \ N(( \boldsymbol{\nu_i} )_i,\boldsymbol{\phi_{1,0}},\tau) \le 1 \}.
\end{equation}
Then,  by continuity, notice that
\begin{equation} \label{Nbord}
N\big((\boldsymbol{ \nu_i} )_i,\boldsymbol{\phi_{1,0}},
s^*(s_0,(\boldsymbol{\nu_{i}})_i,\boldsymbol{\phi_{1,0}})\big) =1,
\end{equation}
and that the supremum defining $
s^*\big(s_0,(\boldsymbol{\nu_{i}})_i,\boldsymbol{\phi_{1,0}}\big)$
 is in fact a maximum.\\
We now consider the (rescaled) flow for the $(\boldsymbol{\nu_i})_i$, that is
\begin{equation}\label{defPhi}
\Phi : (s,( \boldsymbol{\nu_i}  )_i ,\boldsymbol{ \phi_{1,0}}) \mapsto \Gamma_s^{-1}({}^t(\nu_1(s), \dots , \nu_k(s),\phi_1(s))).
\end{equation}
By the properties of the flow, $\Phi$ is a continuous function of $(s,(\boldsymbol{ \nu_i} )_i, { \boldsymbol{\phi_{1,0}}}) \in [s_0,  s^*\big(s_0,(\boldsymbol{\nu_{i}})_i,\boldsymbol{\phi_{1,0}}\big)] \times \m B$.
By definition of the exit time $s^*\big(s_0,(\boldsymbol{\nu_{i}})_i,\boldsymbol{\phi_{1,0}}\big)$, we have that for all $s \in [s_0, 
s^*\big(s_0,(\boldsymbol{\nu_{i}})_i,\boldsymbol{\phi_{1,0}}\big)]$, $\Phi(s,  ( \boldsymbol{\nu_i} )_i , \boldsymbol{\phi_{1,0}}) \in \m B$.
The following claim allows us to conclude:

\medskip

%%%%%%%%%%%%%%%%%%%%%%%%%%%%%%%%%%%%%%%%%%%%%%%%%%%%%%%%%%%%%%%%%%%%%%%%%%
%%%%%%%%%%%%%%%%%%%%%%%%%%%%%%%%%%%%%%%%%%%%%%%%%%%%%%%%%%%%%%%%%%%%%%%%%%

\begin{cli}\label{cli27} 
For $s_0$ large enough, we have:\\
(i) For all $
\big((\boldsymbol{\nu_{i}})_i,\boldsymbol{\phi_{1,0}}\big)\in   \m B$, $\Phi \big(s^*(s_0,(\boldsymbol{\nu_{i}})_i,\boldsymbol{\phi_{1,0}}), 
(\boldsymbol{\nu_{i}})_i,\boldsymbol{\phi_{1,0}}\big)
 \in \m S$.\\
(ii) The flow $s \mapsto \Phi(s,  (\boldsymbol{ \nu_i}  )_i ,\boldsymbol{ \phi_{1,0}})$ is transverse (outgoing) at $
 s=s^*(s_0,(\boldsymbol{\nu_{i}})_i,\boldsymbol{\phi_{1,0}}\big) $ (when it hits $\m S$).\\
%for $s\in [s_0, s^*(s_0, \boldsymbol \nu)]$.\\
(iii) If $
 ((\boldsymbol{\nu_{i}})_i,\boldsymbol{\phi_{1,0}})\in \m S$, then $s^*(s_0, (\boldsymbol{\nu_{i}})_i, \boldsymbol{\phi_{1,0}})=s_0$ and $\Phi(s^*(s_0,(\boldsymbol{\nu_{i}})_i, \boldsymbol{\phi_{1,0}}),  (\boldsymbol{\nu_{i}})_i,\boldsymbol{\phi_{1,0}}) =((\boldsymbol{\nu_{i}})_i, \boldsymbol{\phi_{1,0}})$.
\end{cli}

%%%%%%%%%%%%%%%%%%%%%%%%%%%%%%%%%%%%%%%%%%%%%%%%%%%%%%%%%%%%%%%%%%%%%%%%%%
%%%%%%%%%%%%%%%%%%%%%%%%%%%%%%%%%%%%%%%%%%%%%%%%%%%%%%%%%%%%%%%%%%%%%%%%%%

\begin{proof}[Proof of  Claim 2.7.] In the following, the constant $C$ stands for $C(s_0)$.\\
(i) Since for all $s\in [s_0,s^*(s_0, (\boldsymbol{\nu_{i}})_i,\boldsymbol{ \phi_{1,0}})]$, $N( (\boldsymbol{\nu_{i}})_i, \boldsymbol{\phi_{1,0}},s_0)
\le 1$, it follows that, for all $i=1,\dots,k$,  we have   $| \phi_i(s)|\le C$, hence from  the change of variables \eqref{defxi1} and \eqref{defphi} together with the definition \eqref{solpart} of $\bar \zeta_i(s)$, we see that,  for all $i=1,\dots,k$,  we have
\[
|\xi_i(s)| =\frac 2{p-1}|\zeta_i(s) -\bar \zeta_i(s)-\zeta_0|\le C\]
  so that  for all $i=2,\dots,k$,  we obtain 
 \[|\zeta_i(s) - \zeta_{i-1}(s) - \frac{p-1}{2} \log s| \le C.
\]
This in turns implies that
$1/(Cs^{|\gamma_i|}) \le 1 -d_i^2 \le C/s^{|\gamma_i|}$, except for $i = (k+1)/2$ if $k$ is odd, where $1-d_i(s)^2\ge \frac 1C$.
This leads also to the bounds
\[ J \le \frac{C}{s}, \quad \bar J \le \frac{C}{s^{1/2}}, \quad 
\sum_{j=1}^k \phi_j^2
 \le \frac{C}{s^{2\eta}},
\]
where the different quantities are defined in \eqref{defJ} and  \eqref{defphi}.\\
Hence, the estimates \eqref{est:q}, \eqref{initmod}, \eqref{est:nu} and \eqref{est:phi} read as follows: for all $s \in [s_0,
s^*(s_0, (\boldsymbol{\nu_{i}})_i, \boldsymbol{\phi_{1,0}})]$
\begin{align}
\| q (s) \|_{\q H} & \le \frac{C}{s^{\bar p/2}}+\frac{C}{s^{\alpha/2}}\le \frac{1}{2 s^{1/2+ \eta}}, \text{ and from this} \label{est:q2} \\
| \dot \nu_i - \nu_i | & \le C \left( \frac{1}{s^{|\gamma_i| + \bar p}}  + \frac{1}{s^{|\gamma_i| + \alpha}}+ \frac{1}{s^{|\gamma_i| + 1}}  \right) \le \frac{C}{s^{|\gamma_i| + 1}} \label{est:nu2} \\
\left| \dot \phi_i + \frac{m_i}s \phi_i \right| & \le C \left(  \frac{1}{s^{1+2\eta}} +\frac{1}{s^{3/2}}+\frac 1{s^{(\bar p+1)/2}}+ \frac{1}{s^{1+\delta}}+ \frac{1}{s^{\alpha}} \right) \le \frac{C}{s^{1+2\eta}}, \label{est:phi2}
\end{align}
provided that $s_0$ is large enough
(note that we used the definition \eqref{defeta} of $\eta$ in the first and last line above).
Now, if $i =2, \dots, k$, recall from the definition \eqref{defeta} of $\eta$  that $0<2\eta<m_i$.
Considering $g_i(s) = s^{m_i} \phi_i(s)$, we see that $|\dot g_i(s)| \le C s^{m_i-(1+2\eta)}$. Since $\phi_i(s_0)=0$ by \eqref{initmod2}, we write
\begin{equation} \label{phibound}
| \phi_i(s)| \le   \frac{C}{s^{2\eta}} \le \frac{1}{2 s^\eta},
\end{equation}
for $s_0$ large enough.

\medskip

Since
$N\big(( \boldsymbol{\nu_i} )_i,\boldsymbol{\phi_{1,0}},
s^*(s_0,(\boldsymbol{\nu_{i}})_i,\boldsymbol{\phi_{1,0}})\big) =1$
by \eqref{Nbord}, we see from the definition \eqref{defN} of $N$ together with \eqref{est:q2} and \eqref{phibound} that necessarily there exists $i=1,\dots,k$ such that
\begin{equation*}%\label{nubord}
s^*(s_0,(\boldsymbol{\nu_{i}})_i,\boldsymbol{\phi_{1,0}})^{1/2+|\gamma_i|}|\nu_i(s^*(s_0,(\boldsymbol{\nu_{i}})_i,\boldsymbol{\phi_{1,0}}))|=1,\ \textrm{or}\   s^*(s_0,(\boldsymbol{\nu_{i}})_i,\boldsymbol{\phi_{1,0}})^{\eta}|\phi_1(s^*(s_0,(\boldsymbol{\nu_{i}})_i,\boldsymbol{\phi_{1,0}}))|=1.
\end{equation*}
Using the definitions \eqref{defPhi} and \eqref{rescl} of the flow $\Phi$ and the rescaling function $\Gamma_s$, we get to the conclusion of part (i). % of the Claim.

\medskip

\noindent (ii) Assume that $\Phi(s,(\boldsymbol{\nu_{i}})_i,\boldsymbol{\phi_{1,0}}) \in \m S$ for some $s\in [s_0,s^*(s_0, (\boldsymbol{\nu_{i}})_i,\boldsymbol{\phi_{1,0}})]$. Therefore, 
%\begin{equation}\label{ibord}
%\  s^{\eta}|\phi_1(s)|=1,
%\end{equation}
either there exists $i=1,\dots,k$ such that
\begin{equation}\label{iibord}
s^{1/2+|\gamma_i|}|\nu_i(s)|=1, %\quad {\textrm {or}} \quad \  s^{\eta}|\phi_1(s)|=1.
\end{equation}
or 
\begin{equation}\label{ibord}
\  s^{\eta}|\phi_1(s)|=1.
\end{equation}
In the case where \eqref{iibord}  holds, by  using \eqref{est:nu2}, we write
\begin{align*}
\MoveEqLeft
\frac{d}{ds} s^{1/2+|\gamma_i|} \nu_i(s)
 = {s}^{1/2 + |\gamma_i|} \left( \big( \frac{1}{2} + | \gamma_i|\big) \frac{\nu_i(s)}{s} + \dot \nu_i(s) \right) \\
& =  {s}^{1/2 + |\gamma_i|} \left(  \nu_i(s) \big( 1 + \frac{1}{2s} + \frac{|\gamma_i|}{s} \big) + O \big( \frac{1}{{s}^{1+|\gamma_i|}} \big) \right) \\
& =  {s}^{1/2 + |\gamma_i|} \left(  \nu_i(s) + O \left( \frac{1}{{s}^{1+|\gamma_i|}} \right) \right).
\end{align*}
By  \eqref{iibord}, we deduce that for $s_0$ large enough,
\begin{equation*} %\label{nutrans}
\frac{d}{ds} s^{1/2+|\gamma_i|} \nu_i(s)\cdot \frac{1}{ {s}^{1/2 +|\gamma_i|} \nu_i(s) } \ge \frac{1}{2}.
\end{equation*}
%The same computation holds for any $j$ such that $\nu_j(s^*)$ reaches one extremity of the interval.
Now, in the case where  \eqref{ibord} holds,
 using the fact that $m_1=0$ (see \eqref{defmi}),  we derive from the inequality  \eqref{est:phi2} the fact that
$$|\frac{d}{ds} \phi_1(s^*)|
 \le \frac{\eta}{{2(s^*)}^{1+\eta}}.$$ Since
$\frac{d}{ds}(\frac1{s^{\eta}})=
 -\frac{\eta}{{s}^{1+\eta}}$, then we have
\begin{itemize}
 \item  $\frac{d}{ds} \phi_1(s^*) > \frac{d}{ds} \frac1{s^{\eta}}|_{s=s^*}$, if $ \phi_1(s^*)= \frac1{{s^*}^{\eta}}$,
 \item  $\frac{d}{ds} \phi_1(s^*) < \frac{d}{ds} \frac{-1}{s^{\eta}}|_{s=s^*}$, if $ \phi_1(s^*)= -\frac1{{s^*}^{\eta}}$,
 %\end{itemize}
%\item   $|\frac{d}{ds} \phi_1(s^*)|>\frac{d}{ds}(\frac1{s^{\eta}})\Big|_{s=s^*} $, if $\phi_1(s^*)=(frac{1}{s^*})^{\eta}$
%\item    $|\frac{d}{ds} \phi_1(s^*)|<-\frac{d}{ds}(\frac1{s^{\eta}})\Big|_{s=s^*} $, if $\phi_1(s^*)=-(frac{1}{s^*})^{\eta}$
\end{itemize}
 Thus, the flow is transverse on $\m B$ and part (ii) holds.  Note that the transversality  of  $ \phi_1$ is a new feature in our approach, which was not present in 
 C\^ote and Zaag \cite{CZcpam13}.

\medskip

%then $s^*(s_0, (\nu_{i})_i, \phi_1)=s_0$ and $\Phi(s^*(s_0,(\nu_{i})_i, \phi_1),  (\nu_{i})_i,\phi_{1}) =((\nu_{i})_i, \phi_1)$.
%

\noindent (iii) Let  $((\boldsymbol{\nu_{i}})_i,\boldsymbol{\phi_{1,0}})\in \m S$. From \eqref{initmod} and the definition \eqref{defPhi} of the flow $\Phi$, we see that
\begin{equation}\label{ini}
\Phi(s_0, (\boldsymbol{\nu_{i}})_i, \boldsymbol{\phi_{1,0}}) = ( (\boldsymbol{\nu_{i}})_i, \boldsymbol{\phi_{1,0}}).
\end{equation}
 Since $((\boldsymbol{\nu_{i}})_i,\boldsymbol{\phi_{1,0}})\in \m S$, we can use (ii) of  Claim 2.7 and see that the flow $\Phi$ is transverse to $\m B$ at $s=s_0$. By definition of the exit time, we see that
\[s^*(s_0, (\boldsymbol{\nu_{i}})_i,\boldsymbol{\phi_{1,0}}) = s_0.
\]
Using \eqref{ini}, we get to the conclusion of part (iii). This concludes the proof of   Claim 2.7.
\end{proof}
%Since a contradiction follows from the Claim and index theory,
We now conclude the proof of Proposition \ref{reducw}. From  part (ii) of Claim 2.7, $
((\boldsymbol{\nu_{i}})_i,\boldsymbol{\phi_{1,0}}) \to s^*(s_0, (\boldsymbol{\nu_{i}})_i,\boldsymbol{\phi_{1,0}})$ is continuous, hence from (i) and (iii),
% $\Phi(s^*(s_0,(\nu_{i})_i, \phi_1),  (\nu_{i})_i,\phi_{1}) =
\[
((\boldsymbol{\nu_{i}})_i, \boldsymbol{\phi_{1,0}})
 \mapsto \Phi(s^*(s_0,(\boldsymbol{\nu_{i}})_i,\boldsymbol{ \phi_{1,0}}),
(\boldsymbol{\nu_{i}})_i,\boldsymbol{ \phi_{1,0}})\]
is a continuous map from $\m B$ to $\m S$ whose restriction to $\m S$ is the identity. By the  index theory, this is  a contradiction. Thus, there exists $((\boldsymbol{\nu_{i}})_i, \boldsymbol{\phi_{1,0}}) \in \m B$ such that for all $s\ge s_0$, $N(s_0,
((\boldsymbol{\nu_{i}})_i, \boldsymbol{\phi_{1,0}})\le 1$, hence $w(s_0, ((\boldsymbol{\nu_{i}})_i, \boldsymbol{\phi_{0,1}}, \cdot,s) \in \q V( s)$. This is the desired conclusion. This concludes the proof of  Proposition \ref{reducw}.
\end{proof}

It remains to give the proof of Proposition \ref{propw} in order to conclude this section. Let us first recall from Lemma A.2 in \cite{MZisol10} the following continuity result for the family of solitons $\kappa^*(d,\nu)$:
%%%%%%%%%%%%%%%%%%%%%%%%%%%%%%%%%%%%%%%%%%%%%%%%%%%%%%%%%%%%%%%%%%%%%%%%%%
%%%%%%%%%%%%%%%%%%%%%%%%%%%%%%%%%%%%%%%%%%%%%%%%%%%%%%%%%%%%%%%%%%%%%%%%%%
\begin{lem}[Continuity of $\kappa^*$] \label{contk*}For all $A\ge 2$, there exists $C(A)>0$ such that if $(d_1,\nu_1)$ and $(d_2, \nu_2)$ satisfy
\begin{equation}\label{condA}
\frac {\nu_1}{1-|d_1|},\frac {\nu_2}{1-{|d_2|}}\in [-1+\frac 1A, A],\;\;
\end{equation}
then
\begin{multline}\label{defze}
\|\kappa^*(d_1,\nu_1)-\kappa^*(d_2, \nu_2)\|_{\H} \\
\le C(A)\left(\left| \frac {\nu_1}{1-|d_1|} -\frac {\nu_2}{1-{|d_2|}}\right| +\left|\arg\tanh d_1 - \arg\tanh d_2\right|\right).
\end{multline}
\end{lem}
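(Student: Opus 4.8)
The plan is to establish a \emph{uniform Lipschitz bound} for the map sending the two parameters that appear on the right-hand side of \eqref{defze}, namely $\theta:=\arg\tanh d$ and $a:=\frac{\nu}{1-|d|}$, to the generalized soliton $\kappa^*(d,\nu)\in\H$. Writing $d=\tanh\theta$ and $\nu=a(1-|d|)$, the formulas \eqref{defk*}--\eqref{defk**} turn $\kappa^*$ into a map $K(\theta,a):=\kappa^*\big(\tanh\theta,\,a(1-|\tanh\theta|)\big)$ that is smooth in $(\theta,a)$ away from $\theta=0$ and Lipschitz across $\theta=0$ (the only non-smoothness coming from $|d|$). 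The admissible set $\{a\in[-1+\frac1A,A]\}$ is convex, so any two admissible pairs $(\theta_1,a_1),(\theta_2,a_2)$ are joined by the straight segment $t\mapsto(\theta(t),a(t))$, $t\in[0,1]$, which remains admissible. By the fundamental theorem of calculus in $\H$ together with Minkowski's integral inequality,
\begin{equation*}
\|\kappa^*(d_1,\nu_1)-\kappa^*(d_2,\nu_2)\|_{\H}
\le \int_0^1 \left\| \dot\theta\,\partial_\theta K + \dot a\,\partial_a K \right\|_{\H}\, dt
\le \sup_{[0,1]}\|\partial_\theta K\|_{\H}\,|\theta_1-\theta_2| + \sup_{[0,1]}\|\partial_a K\|_{\H}\,|a_1-a_2|,
\end{equation*}
so everything reduces to bounding $\|\partial_\theta K\|_{\H}$ and $\|\partial_a K\|_{\H}$ by a constant $C(A)$, uniformly over the admissible region.

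The key structural fact that makes these bounds uniform is the lower bound on the denominator: since $dy\ge -|d|$ for $|y|\le 1$, one has $1+dy+\nu\ge 1-|d|+\nu=(1-|d|)(1+a)\ge (1-|d|)/A>0$. I would then differentiate $\kappa_1^*$ and $\kappa_2^*$ in $\theta$ and in $a$, using $\partial_\theta d=1-d^2$, $\partial_a\nu=1-|d|$, and $\partial_\theta\nu=-a\,\sgn(d)(1-d^2)$. Each partial derivative turns out to be a soliton-type profile, i.e. $(1-d^2)^{\frac1{p-1}}$ times a rational function of $y$ over a power of $(1+dy+\nu)$, carrying at most one extra negative power of $(1+dy+\nu)$; that extra power is always multiplied either by $(1-|d|)$ (in the $a$-derivative) or by $(1-d^2)$ (in the $\theta$-derivative). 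Near the worst point $y=-\sgn(d)$ one has $1+dy+\nu\sim(1-|d|)(1+a)$, so these combinations are bounded by $\frac{1-|d|}{(1-|d|)(1+a)}\le A$ and $\frac{2(1-|d|)}{(1-|d|)(1+a)}\le 2A$ respectively; together with $|y-a\,\sgn(d)|\le 1+A$, every factor multiplying the basic profile is controlled by $C(A)$. Inserting into the three pieces of the $\H$-norm \eqref{defnh0} (with weight $\rho=(1-y^2)^{\frac2{p-1}}$) then yields $\|\partial_\theta K\|_{\H},\ \|\partial_a K\|_{\H}\le C(A)$.

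The main obstacle is the careful bookkeeping of the endpoint behaviour at $y=\pm1$: near $y=-\sgn(d)$ the denominator is smallest, of size $(1-|d|)(1+a)$, and one must check that the vanishing of $\rho$ and of $(1-y^2)$ there compensates the extra singularity coming from the parameter differentiation so that the weighted integrals stay bounded \emph{uniformly as $|d|\to1$}. Splitting the integral at the crossover scale $s_*=\frac{(1-|d|)(1+a)}{|d|}$ (with $s=1-|y|$), the exponents $\frac{2}{p-1}$, $\frac{p+1}{p-1}$ and the weight exponent $\frac{2}{p-1}$ conspire so that the net power of $(1-|d|)$ is positive while the powers of $(1+a)$ remain bounded on $[1/A,1+A]$; this is precisely where the constraint $a\ge -1+\frac1A$ prevents the constant from degenerating, and where $p<3$, $p=3$, $p>3$ must be checked separately. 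A minor secondary point is the kink of $|d|$ at $d=0$, handled by splitting the segment at $\theta=0$ and using the (bounded) one-sided derivatives. Since the statement is merely recalled here from \cite[Lemma A.2]{MZisol10}, I would present only these uniform derivative bounds and invoke the path argument above to conclude.
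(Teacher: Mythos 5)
The paper itself never proves this lemma: it is quoted verbatim from Lemma~A.2 of \cite{MZisol10}, so any argument you give is by construction a different route. Your route is sound, and it is a genuinely self-contained one. The two key moves are correct and are exactly what makes the statement true with a constant depending only on $A$: first, passing to the variables $(\theta,a)=(\arg\tanh d,\ \nu/(1-|d|))$ turns the admissible region \eqref{condA} into the convex set $\m R\times[-1+\frac 1A,A]$ (it is \emph{not} convex in $(d,\nu)$, so this reparametrization is essential for the segment argument); second, the uniform lower bound $1+dy+\nu\ge(1-|d|)(1+a)\ge(1-|d|)/A$, together with its companion $1+dy+\nu\ge(1+dy)/A\ge(1-y^2)/(2A)$ which you implicitly need when the $y$-derivative in the $\H$-norm \eqref{defnh0} produces an extra power of the denominator against the weight $(1-y^2)$, shows that $\partial_\theta K$ and $\partial_a K$ are pointwise dominated by $C(A)$ times the basic profile $(1-d^2)^{\frac1{p-1}}(1+dy+\nu)^{-\frac2{p-1}}$, whose weighted norms are bounded uniformly (this is the content of items (ii) and (iv) of Lemma \ref{use}). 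I checked a representative endpoint integral: near $y=-\sgn(d)$ the net power of $(1-|d|)$ indeed comes out positive and the powers of $(1+a)$ are absorbed using $1+a\ge 1/A$, with the case split at $p=3$ (a logarithm appears there, killed by the factor $1-|d|$), exactly as you describe; and your handling of the kink of $|d|$ at $\theta=0$ by splitting the segment is fine since a segment crosses $\{\theta=0\}$ at most once. The only step you leave fully implicit is that $t\mapsto K(\theta(t),a(t))$ is strongly differentiable as an $\H$-valued map (so that the fundamental theorem of calculus plus Minkowski applies); this follows from the same pointwise domination by dominated convergence and is routine. Compared with the paper's choice, your argument costs a page of bookkeeping but buys independence from \cite{MZisol10}, and it recycles estimates (the denominator lower bounds, the profile norms of Lemma \ref{use}) that the paper already uses elsewhere, so it would integrate naturally.
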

%%%%%%%%%%%%%%%%%%%%%%%%%%%%%%%%%%%%%%%%%%%%%%%%%%%%%%%%%%%%%%%%%%%%%%%%%%
%%%%%%%%%%%%%%%%%%%%%%%%%%%%%%%%%%%%%%%%%%%%%%%%%%%%%%%%%%%%%%%%%%%%%%%%%%
With this lemma, we can give the proof of Proposition \ref{propw}.

\begin{proof}[Proof of Proposition \ref{propw}]
Let us consider the solution constructed in Proposition \ref{reducw}. Since $w (s) \in \q V(s)$ for all $s \ge s_0$, from Corollary \ref{cordynphi} and the definition \ref{defVa} of $\q V(s)$, we see that \eqref{est:phi2} holds. In particular, for $i=1,\dots,k$, we see that
\[
|\phi_i(s)|\le \frac C{s^{\eta}}, \qquad  \textrm{for} \qquad  s\qquad \textrm{large enough}.
\]
Therefore, for $i=1,\dots,k$,  we have $\phi_i(s)\to 0$  as $s\to \infty$. Then, by \eqref{defphi},   we see that $\xi_i(s) \to 0$, for $i=1,\dots,k$. From \eqref{defxi1}, we see that $\zeta_i(s) -\bar \zeta_i(s)-\zeta_0\to 0$ for all $i=1,\dots,k$ and \eqref{equid} follows. In particular,
\[
1-|d_i(s)|\sim C_i s^{-|\gamma_i|}\mbox{ as }s\to \infty,
\]
hence, from the definition \ref{defVa} of $\q V(s)$, we have
\[
\forall s\ge s_0,\;\; \frac{|\nu_i|}{1-|d_i(s)|}\le C(s_0)s^{-\frac 12}.
\]
Therefore, Lemma \ref{contk*} applies and since $\kappa^*(d_i(s),0,y)=\Big(\kappa(d_i(s),y),0\Big)$, we have
%by definitions \eqref{defkd} and \eqref{defk*}, we write
\[
\|\kappa^*(d_i(s),\nu_i(s))-(\kappa(d_i(s),0))\|_{\q H}\le C(s_0)\frac{|\nu_i|}{1-|d_i(s)|}\le C(s_0)s^{-\frac 12}.
\]
As $\|q(s)\|_{\q H}\le \frac C{s^{\frac 12 +\eta}}$ by definition \ref{defVa} of $\q V(s)$, % we write from the definition
and with \eqref{defq}, we deduce that  %of $q(y,s)$,
\[
\left\|\vc{w(s)}{\ps w(s)} - \vc{\ds\sum_{i=1}^{k} (-1)^{i+1}\kappa({d}_i(s))}0\right\|_{\H} \le \|q(s)\|_{\q H} + C(s_0)s^{-\frac 12}\le C(s_0)s^{-\frac 12}
\]
and \eqref{cprofile} follows. This concludes the proof of Proposition \ref{propw}.
\end{proof}

\bigskip

Now, we are ready  to give the proof of Theorem 1 (as 
we said at the beginning of the section, we don't prove Corollary 2 as it follows directly from Theorem 1).

%%%%%%%%%%%%%%%%%%%%%%%%%%%%%%%%%%%%%%%%%%%%%%%%%%%%%%%%%%%%%%%%%%%%%%%%%%
%%%%%%%%%%%%%%%%%%%%%%%%%%%%%%%%%%%%%%%%%%%%%%%%%%%%%%%%%%%%%%%%%%%%%%%%%%
\subsection*{Proof of Theorem \ref{mainth}}
%%%%%%%%%%%%%%%%%%%%%%%%%%%%%%%%%%%%%%%%%%%%%%%%%%%%%%%%%%%%%%%%%%%%%%%%%%
%%%%%%%%%%%%%%%%%%%%%%%%%%%%%%%%%%%%%%%%%%%%%%%%%%%%%%%%%%%%%%%%%%%%%%%%%%

The proof is very easy, since we have only
to translate the construction of the previous section into the $u(x,t)$ setting, and recover a solution to our problem. This part contains straightforward and obvious arguments which may be skipped by specialists. We give them for the reader's convenience.\\

Consider an integer $k\ge 2$ and consider $w(y,s)$ the solution of \eqref{eq:nlw_w} constructed in Proposition \ref{propw}.\\
Then, let us define $u(x,t)$ as the solution of equation \eqref{PNLW} with initial data in $\rm H^1_{\rm loc,u}\times \rm L^2_{\rm loc,u}(\m R)$ whose trace in $(-1,1)$ is given by
\begin{equation}\label{u0u1}
u(x,0)=w(x,s_0)\mbox{ and }\partial_t u(x,0)=\partial_s w(x,s_0)+\frac 2{p-1}w(x,s_0)+x\partial_y w(x,s_0).
\end{equation}

Then $u(x,t)$ satisfies all the requirements in Theorem \ref{mainth}. Indeed, by the finite speed of propagation,  we clearly have:

\medskip

(i) {\it For all $t\in[0,1)$ and $|x|<1-t$,
\begin{equation}\label{uinside}
u(x,t)=(1-t)^{-\frac 2{p-1}}w\left(\frac x{1-t}, s_0-\log(1-t)\right).
\end{equation}
}

Indeed, by definition \eqref{def:w} of similarity variables, the function on the right-hand side of \eqref{uinside} is a solution to equation \eqref{PNLW} with the same initial data \eqref{u0u1} as $u(x,t)$. Since that initial data is in  $H^1\times L^2(-1,1)$ and equation \eqref{PNLW} is well-posed in $H^1\times L^2$ of sections of backward light cones, both solutions are equal from the uniqueness to the Cauchy problem and the finite speed of propagation, hence \eqref{uinside} holds.
%from the uniqueness to the Cauchy problem and the finite speed of propagation. 
In particular, from \eqref{def:w}, we have
\begin{equation}\label{egal}
\forall s\ge 0,\;\;\forall y\in(-1,1),\;\;
w_{0,1}(y,s)=w(y,s+s_0).
\end{equation}

(ii) {\it $u$ is a blow-up solution}. Indeed, if not, then $u$ is global and $u\in L^\infty_{loc}([0,\infty),\rm H^1_{loc,u}\times L^2_{loc,u}(\m R))$. In particular, we write from the Sobolev injection, for all $s\ge 0$ and $\epsilon>0$, 
\begin{equation}\label{tozero}
\|w_{0,1}(s)\|_{L^2_\rho}\le C \|u\|_{L^\infty(|x|<1+\epsilon-t)} e^{-\frac {2s}{p-1}}\to 0\mbox{ as }s\to\infty.
\end{equation}
This is in contradiction with \eqref{egal} and \eqref{cprofile}.

\medskip

(iii) {\it $T(0)=1$}. Indeed, from \eqref{egal} we see that $u(x,t)$ is defined in the cone $|x|<1-t$, $t\ge 0$,
 hence $T(0)\ge 1$. From \eqref{tozero}, we see that if $T(0)>1+\epsilon$ for some $\epsilon>0$,
 then the same contradiction follows. Thus $T(0)=1$.

\medskip
(iv)  From above, we can use the simplified notation for \eqref{def:w} and write $w_0$ instead of $w_{0,1}$, and rewrite \eqref{uinside} as follows:
\begin{equation*}%\label{egal}
\forall s\ge 0,\;\;\forall y\in(-1,1),\;\;
w_0(y,s)=w(y,s+s_0).
\end{equation*}
Using \eqref{cprofile} and \eqref{equid},
we see that \eqref{cprofile0} follows for $w_0$ with
\[
\zeta_i(s) -\bar \zeta_i(s)\to\zeta_0\mbox{ as }s\to\infty\mbox{ for }i=1,\dots,k
\]
where $\zeta_0\in \m R$ and $(\bar \zeta_i(s))_i$   is the explicit solution of system \eqref{eq:tl}.
Thanks to the following continuity result for the solitons $\kappa(d)$ (which follows from estimate   \eqref{defze} in Lemma \ref{contk*}):
\begin{equation}\label{contkd}
\|\kappa(d_1)-\kappa(d_2)\|_{\q H_0} \le C|\arg\tanh d_1-\arg \tanh d_2|,
\end{equation}
 we see that \eqref{cprofile0} still holds if we slightly modify the $\zeta_i(s)$ by putting $\zeta_i(s)=\bar \zeta_i(s)+\zeta_0$ as required by \eqref{refequid1}.
Finally, if we consider the more restrictive hypothesis $(H')$,
from the classification of the blow-up behavior for general solutions, available from   \cite{HZBSM}, we clearly see that the origin is a characteristic point.
Thus, we have a solution obeying all the requirements of Theorem \ref{mainth}.

%%%%%%%%%%%%%%%%%%%%%%%%%%%%%%%%%%%%%%%%%%%%%%%%%%%%%%%%%%%%%%%%%%

%%%%%%%%%%%%%%%%%%%%%%%
%%%%%%%%%%%%%%%%%%%%%%%%%%%%%%%%%%%%%%%%%%%%%%%%%%%%%%%%%%%%%%%%%%%%
%%%%%%%%%%%%%%%%%%%%%%%%%%%%%%%%%%%%%%%%%%%%%%%%%%%%%%%%%%%%%%%%%%%%

\appendix

%%%%%%%%%%%%%%%%%%%%%%%%%%%%%%%%%%%%%%%%%%%%%%%%%%%%%%%%%%%%%%%%%%%%%%%%%%%%
%%%%%%%%%%%%%%%%%%%%%%%%%%%%%%%%%%%%%%%%%%%%%%%%%%%%%%%%%%%%%%%%%%%%%%%%%%%%
\section{Dynamics of equation \eqref{eq:nlw_w} near multi-solitons}\label{appdyn}
%%%%%%%%%%%%%%%%%%%%%%%%%%%%%%%%%%%%%%%%%%%%%%%%%%%%%%%%%%%%%%%%%%%%%%%%%%%%
%%%%%%%%%%%%%%%%%%%%%%%%%%%%%%%%%%%%%%%%%%%%%%%%%%%%%%%%%%%%%%%%%%%%%%%%%%%%

This appendix is devoted to the proof of Proposition \ref{propdyn}. In the special case
where $(f,g)\equiv (0,0)$,  the proof is already known from  
 \cite[Proposition 3.4]{CZcpam13}. Note that this proof 
is based on Lemma C.2, Claims 4.8 and 4.9 of \cite{MZisol10}, together with Proposition 3.2 in \cite{MZajm11}.
%except for a crucial refinment of (\ref{est:q}) 
%wich yields the exponential terms an the left-hand side.

\medskip

 In our case,
where  $(f,g)\not\equiv (0,0)$, in order to avoid unnecessary  repetition, we kindly refer the reader to \cite{CZcpam13} and  \cite{MZisol10} for all the projections of the terms in (\ref{eq:qq}) 
not involving
$f$ and $g$  and we will  only focus here on the terms with $f$ and $g$.

%
%
% Since the proof 
% only some  refinements with respect to the proofs of Claims 4.8 and 4.9 in \cite{MZisol10} and Proposition 3.2 in \cite{MZajm11} related to the unperturbed case, we only give indications on the refinements. Hence, this section is not self-contained, since making it self-contained would add many pages with no new techniques with respect to \cite{MZajm11}, \cite{MZisol10}    and \cite{CZcpam13}.

\begin{proof}[Proof of Proposition \ref{propdyn}]

Using the definition \eqref{defq} of $q$, we transform equation \eqref{eq:nlw_w} satisfied by   $w$ into the following system satisfied by $q$, for all $s\in [s_0, \bar s)$:
\begin{align}\label{eq:qq}
\ds\frac \partial {\partial s}
\vc{q_1}{q_2} & =\ll \vc{q_1}{q_2}
-\sum_{j=1}^\k(-1)^j\left[(\nu_j'(s)-\nu_j(s))\pnu\kappa^*+d_j'(s)\partial_d \kappa^*\right](d_j(s),\nu_j(s),y)\nonumber \\
&  +\vc{0}{R} + \vc{0}{h(q_1)} +\vc{0}{\tilde f(q_1)}+
\vc{0}{\tilde g(q_1)},
\end{align}
where
\begin{align}
\ll\vc{q_1}{q_2} & = \vc{q_2}{   \q L  q_1+\psi q_1-\frac{p+3}{p-1}q_2-2y\py q_2},\label{vd1}\\
\psi(y,s)& = p|K^*_1(y,s)|^{p-1} -\frac{2(p+1)}{(p-1)^2}, \qquad
K^*_1(y,s) = \sum_{j=1}^\k  (-1)^j\kappa^*_1(d_j(s),\nu_j(s),y),\label{vd2} \\
h(q_1) & = |K^*_1+q_1|^{p-1}(K^*_1+q_1)- |K^*_1|^{p-1}K^*_1- p|K^*_1|^{p-1} q_1, \label{vd3}\\
\tilde f(q_1)&=e^{-\frac{2ps}{p-1}}f\Big(e^{\frac{2s}{p-1}}(K^*_1+q_1)\Big)\label{vd4}\\
\tilde g(q_1)&=
e^{-\frac{2ps}{p-1}}g\Big(x_0+ye^{-s},T_0-e^{-s},e^{\frac{2s}{p-1}}w,e^{\frac{(p+1)s}{p-1}}\partial_yw,e^{\frac{(p+1)s}{p-1}}(\partial_sw+y\partial_y
w+\frac{2w}{p-1})\Big)\label{vd5}\\
R & = |K^*_1|^{p-1}K^*_1- \sum_{j=1}^\k (-1)^j\kappa^*_1(d_j(s),\nu_j(s),y)^p,\qquad \qquad
w=K^*_1+q_1.\label{vd6}
\end{align}

\medskip

As in the unperturbed case,
 we give a decomposition of the solution which is well adapted to the proof. More precisely,
 we start by   localizing  equation (\ref{eq:qq}) near the center 
$d^*_i(s)=\frac {d_i(s)}{1+\nu_i(s)}$ of $\kappa^*(d_i(s), \nu_i(s))$  for each $i=1,\dots,k$, which allows us to view it locally as a perturbation of the case of $\kappa(d,y)$ already treated in \cite{MZjfa07}. For this, 
given $i=1,\dots,k$, we need to expand the linear operator of equation 
 (\ref{eq:qq}) 
 as  follows:
\begin{equation}\label{expl}
\ll(q) = \LL_{d_i^*(s)}(q) + (0, \bar V_i(y,s) q_1)+ (0, V_i^*(y,s) q_1),
\end{equation}
with
\begin{eqnarray}
\LL_{d}\vc{q_1}{q_2}&=&\vc{q_2}{  \q L  q_1+\psi^*(d)q_1-\frac{p+3}{p-1}q_2-2yq_2'},\label{defld}\\
\psi^*(d,y)&=&p\kappa(d,y)^{p-1}-\frac{2(p+1)}{(p-1)^2},\label{defpsi*}\\
\bar V_i(y,s)&=&p \kappa_1^*(d_i(s), \nu_i(s),y)^{p-1}-p\kappa(d_i^*(s),y)^{p-1},\label{defbvi}\\
V_i^*(y,s)&=& p|K^*_1(y,s)|^{p-1} - p \kappa^*_1(d_i(s), \nu_i(s),y)^{p-1}.\label{defvi-09}
\end{eqnarray}
We proceed in two parts.

- In Part 1, in order to prove \eqref{est:nu} and \eqref{est:zeta}, we project equation \eqref{eq:qq} using the projector $\pp_l^{d_i^*(s)} $ defined in \eqref{defpdi} with $l=0,1$ and $d_i^*(s) = \frac{d_i(s)}{1+\nu_i(s)}.$

- In Part 2, we will find a Lyapunov functional for equation \eqref{eq:qq}, which is equivalent to the norm squared, and we thus 
 obtain estimate \eqref{est:q}.

\bigskip

{\bf Part 1: Projection of equation \eqref{eq:qq} on   $F_0^{d_i^*(s)}$ and $F_1^{d_i^*(s)}$}

Let us assume that $s_0\ge 1$ is  large enough  and take $s\in [s_0, \bar s)$.
%In this part,  we will project each term of equation \eqref{eq:qq} with the projector $\pp_l^{d_i^*(s)} $ defined in \eqref{defpdi} with $l=0,1$ and $d_i^*(s) = \frac{d_i(s)}{1+\nu_i(s)}$, ending by deriving equations \eqref{est:nu} and \eqref{est:zeta}.\\
\medskip

The proof of  (\ref{est:nu}) and   (\ref{est:zeta})  is similar to the proof in \cite{CZcpam13}
except for the treatment of the perturbation terms.  More precisely,
 let $i=1,\dots,k$ be fixed  and $l=0$ or $1$,
the projector 
 $\pp_l^{d_i^*(s)} $ 
defined in (\ref{defpdi}) is now applied for 
 each term of equation (\ref{eq:qq}).
 Thanks to (\ref{conmod}) together with  the analysis of  
\cite[Appendix C]{MZajm11},  \cite[Appendix C]{MZisol10} and \cite[Appendix A]{CZcpam13}, we easily  obtain the following estimates related to the
terms not  involving $f$ and $g$:
\begin{eqnarray}\label{s0}
\begin{array}{l}
|\Pi^{d_i^*}_l(\partial_s q)|\le 
C\displaystyle{
\frac{|d_i'|+|\nu_i'-\nu_i|+|\nu_i| }{1-{d_i^*}^2}\|q\|_{\H}},
\\
\Pi^{d_i^*}_l(\LL_{d_i^*}(q))=l\Pi^{d_i^*}_l\left(q\right)=0,\\
\\
|\Pi^{d_i^*}_l(0,h(q_1))|\le C \|q\|_{\H}^2 ,\\
\\
|\Pi^{d_i^*}_l(0,V_i^*q_1)| \le C \|q\|_{\H}^2 
+C {(J^*)}^{1+\delta_1},\\
\\
|\Pi^{d_i^*}_l
(0,{\bar V_i} q_1)|
 \le C  \displaystyle{ \frac{|\nu_i|}{1-{d_i^*}^2}\|q\|_{\H}},\\
|\Pi^{d_i^*}_l(0,R)|\le \displaystyle{CJ^*},\\
|\Pi_0^{d_i^*}(0,R)-c_2(p)(-1)^i[ e^{-\frac 2{p-1}(\zeta_i-\zeta_{i-1})}-e^{-\frac 2{p-1}(\zeta_{i+1}-\zeta_i)}] |\le CJ^{1+\delta_2}+CJ\bar J,
\end{array}
\end{eqnarray}
where  $\delta_1 >0$, $\delta_2 >0$ and $c_2(p)>0$, where 
\begin{equation}\label{Jstart}
J^*=\displaystyle{\sum_{i=1 }^{\k-1 } e^{-\frac 2{p-1}(\zeta^*_{i+1}-\zeta^*_i) }},
\end{equation}
 and $\zeta^*_{i}=- \argth (d_i^*) $. 
Moreover,  we recall  some estimates related to  the projections of  $\partial_{\nu} \kappa^*(d_i,\nu_i)$
and $\partial_{d} \kappa^*(d_i,\nu_i)$  given in \cite[Claim 3.2]{MZisol10} and  in  \cite[Appendix A]{CZcpam13}
\begin{align}\label{s0000}
%\begin{array}{l}
&\Pi_0^{d_i^*}(\partial_{\nu} \kappa^*(d_i,\nu_i))=0,\nonumber\\
&-\frac {C}{1-{d_i^*}^2}\le \Pi_1^{d_i^*}(\partial_{\nu} \kappa^*(d_i,\nu_i))\le -\frac 1{C(1-{d_i^*}^2)},\nonumber\\
&|\Pi_1^{d_i^*}(\partial_d \kappa^*(d_i,\nu_i))|\le \frac{C}{1-{d_i^*}^2},
%\nonumber
\\
&-\frac {C}{1-{d_i^*}^2}\le \Pi_0^{d_i^*}(\partial_d \kappa^*(d_i,\nu_i))\le -\frac 1{C(1-{d_i^*}^2)},\nonumber\\
&\left|\Pi_l^{d_i^*}(\partial_{\nu} \kappa^*(d_j,\nu_j))\right|+\left|\Pi_l^{d_i^*}(\partial_d \kappa^*(d_j,\nu_j))\right|\le \frac{C}{1-{d^*_j}^2}\sqrt{J^*}, \quad \textrm{for}\quad  i\neq j,\nonumber\\
&\left|\pp_0^{{d_i^*}}(\partial_d \kappa^*(d_i,\nu_i))+\frac {c_3}{1-d_i^2}\right|\le C\bar J,\nonumber
%\end{array}
\end{align}
for some $c_3>0$.

\medskip

In the sequel, we shall intensively  use  the following inequalities
\begin{equation}\label{equiv11}
C^{-1}J\le J^*\le CJ,\qquad    C^{-1}\le \frac{1-d_i^2}{1-d_i^{*2}}\le C,
\end{equation}
and
\begin{equation}\label{equiv112}
C^{-1}(J^*)^{\frac{\bar p}2}\le \hat J^*\le C(J^*)^{\frac{\bar p}2},
\end{equation}
%and 
%\begin{equation}\label{equiv113}
%C^{-1}\sqrt{J^*}\le J^*_1\le C\sqrt{J^*},
%\end{equation}
where $J$  and $J^*$ are   defined in \eqref{defJ}  and \eqref{Jstart} and 
\begin{equation}\label{equiv114}\displaystyle{ \hat J^* = \sum_{i=1}^{k-1} e^{- \frac{\bar p}{p-1}(\zeta^*_{i+1} - \zeta^*_{i})}}.
 %\quad \textrm{and} \quad \displaystyle{  J^*_1 = \sum_{i=1}^{k-1} e^{- \frac{1}{p-1}(\zeta^*_{i+1} - 
%%\zeta^*_{i})}}.
\end{equation}
As for the proof of  \eqref{equiv11} 
and \eqref{equiv112}, note that   \eqref{equiv11} easily follows from
  (\ref{conmod}), and that 
 \eqref{equiv112}  follows from the fact that all the norms on $\R^{k-1}$
 are equivalent.

%For (ii), see (ii) of Proposition 1 page 47 in \cite{MZjfa07}. 
%The proof of (iii) 
%is straightforward from the definition .
%
%\bigskip
%%
%%\noindent Introducing 
%%\begin{equation}%\label{deflambda}
%%\lambda= \lambda(d,\nu)=\frac{(1-d^2)^{\frac 1{p-1}}}{[(1+\nu)^2-d^2]^{\frac 1{p-1}}},
%%\end{equation}
%\noindent Now,
%we give in the following lemma some  properties of $\kappa^*(d,\nu,y)$ defined in \aref{defk*} which are useful for the proof.

\medskip

Now, we focus on the ``new'' terms of the problem related to the  perturbation terms $f$ and $g$.

%%%%%%%%%%%%%%%%%%%%%%%%%%%%%%%%%%%%%%%%%%%%%%%%%%%%%%%%%%%%%%%%%%%%%%%%%
%%%%%%%%%%%%%%%%%%%%%%%%%%%%%%%%%%%%%%%%%%%%%%%%%%%%%%%%%%%%%%%%%%%%%%%%%%%%

%%%%%%%%%%%

\medskip

Let us first recall from 
our previous papers the following  basic bounds  on the solitons $\kappa (d,y)$ defined in \eqref{defkd} and $\kappa^*(d,\nu,y)$ 
defined in  (\ref{defk*}-\ref{defk**}) together with a crucial Hardy Sobolev  inequality:
  %whichare useful for the proof.
%%%%%%%%%%%%%%%%%%%%%%%%%%%%%%%%%%%%%%%%%%%%%
\begin{lem}{\bf{(Useful estimates)}}\label{use}{\ }\\
(i){\bf{(A Hardy-Sobolev type inequality)}} For all $h\in \H_0$, we have
\begin{equation*}
\|h\|_{L^2_{\frac \rho{1-y^2}(-1,1)}}+\|h\|_{L^{p+1}_\rho(-1,1)}+ \|h(1-y^2)^{\frac 1{p-1}}\|_{L^\infty(-1,1)}
\le C\|h\|_{\H_0}.
\end{equation*}
(ii){\bf{ (Boundedness of $\kappa (d, y)$  in several norms)}}  For all  $d\in (-1,1)$, we have
\begin{equation*}
\|\kappa(d,y)\|_{L^2_{\frac \rho{1-y^2}(-1,1)}}+\|\kappa(d,y)\|_{L^{p+1}_\rho(-1,1)}+ \|\kappa(d,y)(1-y^2)^{\frac 1{p-1}}\|_{L^\infty(-1,1)}
+\|\kappa(d,y)\|_{\H_0}\le C.
\end{equation*}
(iii) For $l=0,1$ and any $|y|<1$, $|W_{l,2}( d,y)|\le C \kappa( d,y)\le C(1-y^2)^{-\frac 1{p-1}}$.\\ 
%and $|\q L W_{l,1}(\ d,y) - W_{l,1}( d,y)|\le C\frac {\kappa( d,y)}{1-y^2}$.\\
(iv)
{\bf{ (Properties of $\kappa^*(d,\nu,y)$ )}} For all $d\in (-1,1)$ and $\nu >-1+|d|$, we have:
\begin{eqnarray*}
\forall y\in(-1,1),\;\;0\le \kappa_1^*(d,\nu,y)=\lambda \kappa\left(\frac d{1+\nu},y\right)&\le & \frac{C \lambda}{(1-y^2)^{\frac 1{p-1}}},\\
\left\|\kappa^*\left(d,\nu\right)\right\|_{\H} &\le& C\lambda +C1_{\{\nu<0\}}\frac{|\nu|}{\sqrt{1-d^2}}\lambda^{\frac{p+1}2},
\end{eqnarray*}
where $\lambda$ is defined  by
\begin{equation*}
\lambda= \lambda(d,\nu)=\frac{(1-d^2)^{\frac 1{p-1}}}{[(1+\nu)^2-d^2]^{\frac 1{p-1}}}.
\end{equation*}
%(iv)$\iint |K^*_1|^{p}\kappa(d^*_i)\rho dy\le C$\\
(v)  For all $d\in (-1,1)$ and $\nu >-1+|d|$, we have:
  $$\forall y\in(-1,1),  \ \ |\partial_{d} \kappa^*_1(d, \nu, y)|+
 |\partial_{\nu} \kappa^*_1(d, \nu,y)|\le C\ds{  \frac{ \kappa (d^*,y)}{1-d^{*2}}},  \ \textrm{where}\ \  d^* = \frac{d}{1+\nu}.$$
\end{lem}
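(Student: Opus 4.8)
The plan is to treat Lemma~\ref{use} as a collection of explicit, essentially algebraic, estimates built on a single structural device: the hyperbolic change of variable $z=\frac{y+d}{1+dy}$, which is an isometry of $(-1,1)$ carrying the soliton $\kappa(d,\cdot)$ onto the constant $\kappa(0,\cdot)=\kappa_0$. The computational backbone consists of the two elementary identities
\[
(1+dy)^2-(y+d)^2=(1-d^2)(1-y^2),\qquad (1+dy)^2-(1-d^2)(1-y^2)=(d+y)^2,
\]
the first giving $1-z^2=\frac{(1-d^2)(1-y^2)}{(1+dy)^2}$ and hence $\kappa(d,y)(1-y^2)^{\frac1{p-1}}=\kappa_0(1-z^2)^{\frac1{p-1}}$, the second giving $(1+dy)^2\ge(1-d^2)(1-y^2)$. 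Between them they reduce the whole $d$-dependence to the case $d=0$.

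For (i), I would invoke the weighted Hardy--Sobolev inequality established by Merle and Zaag in \cite{MZjfa07}: the degeneracy of the weight $\rho(1-y^2)=(1-y^2)^{\frac{p+1}{p-1}}$ at $y=\pm1$ is exactly what allows a one-dimensional Hardy estimate near each endpoint to control the $L^2_{\rho/(1-y^2)}$ norm, after which the weighted $L^\infty$ and $L^{p+1}$ bounds follow from a one-dimensional Sobolev embedding. For (ii), the uniform-in-$d$ bounds are then immediate from (i) once one knows $\|\kappa(d)\|_{\H_0}\le C$ uniformly; this last fact I would obtain by applying the change of variable $y\mapsto z$, under which the $\H_0$-integral of $\kappa(d)$ becomes the $\H_0$-integral of the constant $\kappa_0$ against a Jacobian-weighted measure that stays uniformly integrable thanks to the vanishing of $(1-z^2)^{\frac2{p-1}}$ at the endpoints.

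For (iii), I would bound the explicit profiles $W_{l,2}$ directly: since $1+dy\ge 1-|d|>0$, the factor $\frac{1-y^2}{1+dy}$ is bounded, and since $\frac{y+d}{1+dy}$ has derivative $\frac{1-d^2}{(1+dy)^2}>0$ and runs from $-1$ to $1$, it is bounded by $1$ in modulus; both estimates yield $|W_{l,2}(d,y)|\le C\kappa(d,y)$, and the final bound $\kappa(d,y)\le C(1-y^2)^{-\frac1{p-1}}$ is precisely the second identity above. For (iv), the key is the relation $\kappa_1^*(d,\nu,y)=\lambda\,\kappa(d^*,y)$ with $d^*=\frac d{1+\nu}$, verified by inserting $1-{d^*}^2=\frac{(1+\nu)^2-d^2}{(1+\nu)^2}$ and $1+d^*y=\frac{1+\nu+dy}{1+\nu}$ into the definition of $\kappa$; the pointwise bound then follows from (iii) at $d^*$, and the $\H_0$-part of the norm bound from $\|\kappa_1^*\|_{\H_0}=\lambda\|\kappa(d^*)\|_{\H_0}\le C\lambda$ via (ii). Part (v) is obtained by differentiating the closed form of $\kappa_1^*$ in $d$ and $\nu$ and recognizing the outcome as $\kappa(d^*)/(1-{d^*}^2)$ times bounded factors.

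The main obstacle is the second inequality in (iv): the $L^2_\rho$-control of $\kappa_2^*(d,\nu,y)=-\frac{2\kappa_0\nu}{p-1}\frac{(1-d^2)^{\frac1{p-1}}}{(1+dy+\nu)^{\frac{p+1}{p-1}}}$ and, in particular, the origin of the correction term $1_{\{\nu<0\}}\frac{|\nu|}{\sqrt{1-d^2}}\lambda^{\frac{p+1}2}$. When $\nu\ge0$ the denominator $1+dy+\nu$ stays bounded below and a direct computation gives $\|\kappa_2^*\|_{L^2_\rho}\le C\lambda$; the delicate regime is $\nu<0$, where $1+dy+\nu$ may approach its minimum near an endpoint, so one must track the near-singular contribution of $\int_{-1}^1(1+dy+\nu)^{-\frac{2(p+1)}{p-1}}\rho\,dy$ and extract precisely the stated weight. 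In practice I would carry out the $\nu\ge0$ case explicitly and quote the sharp $\nu<0$ estimate from \cite{MZisol10}, where this lemma originates.
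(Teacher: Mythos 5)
Your proposal is correct in outline but differs in character from the paper's own proof, which is a pure citation list: (i) is Lemma 2.2 of \cite{MZjfa07}, (ii) is (i) plus identity (49) of \cite{MZjfa07}, (iii) is (196) of \cite{MZjfa07}, (iv) is Lemma A.2 (i) of \cite{MZisol10}, and (v) is (3.41) of \cite{MZisol10}. What you reconstruct --- the change of variable $z=\frac{y+d}{1+dy}$, the identity $(1+dy)^2-(y+d)^2=(1-d^2)(1-y^2)$, and the factorization $\kappa_1^*(d,\nu,y)=\lambda\,\kappa(d^*,y)$ --- is exactly the machinery used inside those references, so your route is a legitimate, more self-contained substitute. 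Your caution about the second bound in (iv) is even slightly excessive: pushing your own computation through gives $\|\kappa_2^*\|_{L^2_\rho}=C\,\frac{|\nu|}{1+\nu}\,\lambda\,(1-d^{*2})^{-1/2}=C\,|\nu|\,\lambda\,[(1+\nu)^2-d^2]^{-1/2}$, and this last quantity equals $C\frac{|\nu|}{\sqrt{1-d^2}}\,\lambda^{\frac{p+1}{2}}$ identically, while for $\nu\ge 0$ it is $\le C\lambda$ because $(1+\nu)^2-d^2\ge\nu^2$; so all of (iv) is within reach of your method, and your fallback citation is the same as the paper's.

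Two repairs are needed. In (iii), your justification for $|W_{1,2}(d,y)|\le C\kappa(d,y)$, namely ``$1+dy\ge 1-|d|>0$, so $\frac{1-y^2}{1+dy}$ is bounded'', produces a constant of size $(1-|d|)^{-1}$; that is useless here, since the lemma is invoked at $d=d_i^*(s)\to\pm 1$. The uniform bound comes instead from $1+dy\ge 1-|y|$, which gives $\frac{1-y^2}{1+dy}\le 1+|y|\le 2$; you also need, and do not mention, that $c_1(d)$ is bounded uniformly in $d$, which is part of what the normalization cited from \cite{MZajm11} guarantees.

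More seriously, your one-line argument for (v) (``bounded factors'') fails on the parameter range as stated. Differentiation gives exactly $|\partial_\nu\kappa_1^*|=\frac{2}{p-1}\,\frac{\lambda}{1+\nu}\,\frac{\kappa(d^*,y)}{1+d^*y}$, and the factor $\frac{\lambda}{1+\nu}$ is unbounded as $\nu$ decreases to $-1+|d|$: taking $d=0$ and $\nu\to -1^+$, the left-hand side of (v) equals $\frac{2\kappa_0}{p-1}(1+\nu)^{-\frac{p+1}{p-1}}\to\infty$ while the right-hand side stays equal to $C\kappa_0$. So (v) cannot hold with a universal constant for all $\nu>-1+|d|$; it holds (and is proved in \cite{MZisol10}) under a constraint of the type $\frac{\nu}{1-|d|}\in[-1+\frac 1A,A]$ with $C=C(A)$ --- a condition amply satisfied here by \eqref{conmod}, which gives $|\nu_i|/(1-|d_i|)\le s_0^{-1/4}$. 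A proof written along your lines must make this restriction explicit; ``bounded factors'' hides the only real difficulty in (v). (Note the contrast with (iv), which does hold on the full range precisely because of the extra $\lambda^{\frac{p+1}{2}}$ term.)
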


%%%%%%%%%%%%%%%%%%%%%%%%%%%%%%%%%%%%%%%%%%%%%%%%%%%%%%%%%%%%%%%%%%%%%%%%%%%%
%%%%%%%%%%%%%%%%%%%%%%%%%%%%%%%%%%%%%%%%%%%%%%%%%%%%%%%%%%%%%%%%%%%%%%%%%%%%
\begin{proof}- For  (i), see Lemma 2.2 page 51 in  \cite{MZjfa07}.
For (ii), use (i) and  identity (49) page 59 in \cite{MZjfa07}. For (iii), see (196)  page 105 in  \cite{MZjfa07}.
For (iv),  see (i) in Lemma A.2 page 2878  in   \cite{MZisol10}.   For (v), see (3.41) page 2857  in    \cite{MZisol10}.
\end{proof}

\bigskip

In  the following, we give the estimates  involving the   perturbation terms. 
\begin{itemize}
\item Estimate of  $\Pi^{d_i^*}_l({0},{\tilde f(q_1))}$, for $l=0,1$.

\medskip

Using the definition \eqref{defpdi} and  (iii) in Lemma \ref{use}, we  conclude that
\begin{equation}
|\Pi^{d_i^*}_l({0},{\tilde f(q_1))}|\le   C\iint  |\tilde f(q_1)|\kappa(d^*_i )\rho dy. \label{t0}
\end{equation}
By using the definition \eqref{vd4} of $\tilde f$,  the condition  $(H)$ and 
(iii) in Lemma \ref{use},  we deduce that
\begin{equation}
 |\Pi^{d_i^*}_l({0},{\tilde f(q_1))}| \leq Ce^{-\frac{2ps}{p-1}} %\iint \kappa(d^*_i) \rho dy
+C \iint\frac{|K^*_1+q_1|^{p} }{(\log(2+e^{\frac{4s}{p-1}}(K^*_1+q_1)^2))^{\alpha}}\kappa(d^*_i)\rho dy.\label{tv1}
\end{equation}
In order to bound  this latter integral, we divide the interval $[-1,1]$ in two parts:
 $$A_{1}(s)=\{y \in (-1,1) \,\,|\,\,|K^*_1+q_1|\leq  e^{-\frac{s}{p-1}}\}\,\,{\rm and }\,\,A_{2}(s)=\{y \in  (-1,1)\,\,|\,\, |K^*_1+q_1|>  e^{-\frac{s}{p-1}}\}.$$
On the one hand, ${\rm if }\,\,y \in A_{1}(s)$, we have
\begin{align}\label{nov3}
\frac{|K^*_1+q_1|^{p} }{(\log(2+e^{\frac{4s}{p-1}}(K^*_1+q_1)^2))^{\alpha}}\le  C e^{-\frac{ps}{p-1}}.
\end{align}
Integrating  \eqref{nov3}  over $A_{1}(s)$ and using
 (iii) in Lemma \ref{use},  we see that
\begin{equation}
\int_{A_{1}(s)}\frac{|K^*_1+q_1|^{p} }{(\log(2+e^{\frac{4s}{p-1}}(K^*_1+q_1)^2))^{\alpha}}\kappa(d^*_i)\rho dy
\le C e^{-\frac{ps}{p-1}}. \label{t1}
\end{equation}
On the other hand, if $y\in A_{2}(s)$, we have
$$\log(2+e^{\frac{4s}{p-1}}(K^*_1+q_1)^2)>\log(2+e^{\frac{2s}{p-1}}) \geq \frac{2s}{p-1},  $$
and we write,  for all$\,\,y \in A_{2}(s)$,
\begin{align}
\frac{|K^*_1+q_1|^{p} }{(\log(2+e^{\frac{4s}{p-1}}(K^*_1+q_1)^2))^{\alpha}}\le   \frac{C}{s^{\alpha}} |K^*_1+q_1|^{p}\le   \frac{C}{s^{\alpha}}\Big( |K^*_1|^{p}+  |q_1|^{p}\Big).\label{tv2}
\end{align}
If we integrate  \eqref{tv2} over $A_{2}(s)$, using the simple fact that $A_{2}(s)\subset [-1,1],$ we obtain
\begin{equation}
\int_{A_{2}(s)}\!\! \frac{|K^*_1+q_1|^{p} }{(\log(2+e^{\frac{4s}{p-1}}(K^*_1+q_1)^2))^{\alpha}}\kappa(d^*_i)\rho dy
\le  \frac{C}{s^{\alpha}}\iint\!  \big(|K^*_1|^{p}+
|q_1|^{p}\big)\kappa(d^*_i)\rho dy.
%+ \frac{C}{s^{\alpha}}\iint |q_1|^{p}\kappa(d^*_i)\rho dy.
\label{t2}
\end{equation}
Furthermore, we apply  the  inequality  $ab^p\le C( a^{p+1}+b^{p+1})$,  for all $a>0, b>0$,  to get
\begin{align}
 \iint |q_1|^{p}\kappa(d^*_i)\rho dy\le   C\iint |q_1|^{p+1}\rho dy+C\iint \kappa^{p+1}(d^*_i)\rho dy.
\label{t22}
\end{align}
Gathering \eqref{t2},  \eqref{t22},    Lemma \ref{use}   and (\ref{conmod}), we have
\begin{align}
 \iint |q_1|^{p}\kappa(d^*_i)\rho dy\le   C\|q\|_{\H}^{p+1}+C \le C.\label{t222}
\end{align}
 It suffices to combine    \eqref{tv1}, \eqref{t1}, \eqref{t2},  \eqref{t222}  and (iv) in   Lemma \ref{use},   to obtain that
\begin{align}
|\Pi^{d_i^*}_l({0},{\tilde f(q_1))}|\le \displaystyle{\frac{C}{s^{\alpha}}}.\label{s1}
\end{align}

\item Estimate of  $\Pi^{d_i^*}_l({0},{\tilde g(q_1))}$,  for $l=0,1$.

\medskip

 Proceeding as for  estimate \eqref{t0},
 we  have %use the definition \eqref{defpdi} and the inequality $|W_{l,2}(d^*,y)|\le  C  \kappa(d^* ,y) $,  to   conclude  that
\begin{equation}
|\Pi^{d_i^*}_l({0},{\tilde g(q_1))}|\le   C\iint 
|\tilde g(q_1)| \kappa(d^*_i ) \rho dy. \label{g0}
\end{equation}
By using the condition  $(H)$ and the definition \eqref{vd5} of $\tilde g$, we obtain 
\begin{align}\label{nov8}
 |{\tilde g(q_1)}| \le &
Ce^{-s}  (|\partial_yK^*_1+\partial_yq_1|+|\partial_sK^*_1+\partial_sq_1|)\nonumber\\
&+Ce^{-s}|K^*_1+q_1|+|K^*_1+q_1|^{\frac{p+1}2}) 
+Ce^{-\frac{2ps}{p-1}},
\end{align}
where $K^*_1$ is defined in \eqref{vd2}.
Combining    \eqref{nov8} and    the  expression 
\begin{equation}
q_2=\partial_sq_1+\partial_sK^*_1-\sum_{j=1}^\k  (-1)^j \nu_j 
\partial_{\nu} \kappa^*_1(d_j, \nu_j) \label{q21}
\end{equation}
(which comes from  \eqref{defq}),  one easily obtains
\begin{align}
 |{\tilde g(q_1)}| 
 \leq &
Ce^{-s}  \big(|\partial_yq_1|+|q_2|+|q_1|+|q_1|^{\frac{p+1}2}\big)
+
Ce^{-s}  \big(|\partial_yK^*_1|+|K^*_1|+|K^*_1|^{\frac{p+1}2}\big)\nonumber\\
& +Ce^{-s} 
\sum_{j=1}^\k  | \nu_j| 
|\partial_{\nu} \kappa^*_1(d_j, \nu_j)| +
Ce^{-\frac{2ps}{p-1}}.\label{8novbis}
\end{align}
According to  \eqref{g0},  \eqref{8novbis},  together with items (i)  and
 (ii) in   Lemma \ref{use}, we deduce that
\begin{align}
|\Pi^{d_i^*}_l({0},{\tilde g(q_1))}|\le \ \ \ \ \ &\underbrace{   Ce^{-s}\iint 
  \big(|\partial_yq_1|+|q_2|+|q_1|+|q_1|^{\frac{p+1}2}\big)\kappa(d^*_i ) \rho dy}_{I_1(s)}\nonumber\\
&+\underbrace{
Ce^{-s}\iint  \big(|\partial_yK^*_1|+|K^*_1|+|K^*_1|^{\frac{p+1}2}\big)\kappa(d^*_i ) \rho dy}_{I_2(s)}\label{8nov2}\\
&+\underbrace{Ce^{-s} 
\sum_{j=1}^\k  | \nu_j| \iint
|\partial_{\nu} \kappa^*_1(d_j, \nu_j)| \kappa(d^*_i ) \rho dy}_{I_3(s)}+
Ce^{-\frac{2ps}{p-1}}.\nonumber
\end{align}
We are going now to estimate the different terms of the right-hand  side of inequality  \eqref{8nov2}. Thanks to the the classical
inequality $ab\le a^2+b^2$,   (ii) in   Lemma \ref{use} and 
 (\ref{conmod}), we conclude that 
\begin{align}\label{8nov3}
|I_1(s)|\le  Ce^{-s}\iint 
  \Big(|\partial_yq_1|^2(1-y^2)+|q_2|^2+|q_1|^2+|q_1|^{p+1}+\frac{\kappa^2(d^*_i )}{1-y^2}\Big) \rho dy\le Ce^{-s}.
\end{align}
 Proceeding as for  estimate \eqref{8nov3} and exploiting,    (\ref{conmod}),   (i) and (iv) in   Lemma \ref{use},  we conclude
\begin{align}\label{8nov4}
|I_2(s)|\le Ce^{-s}.
\end{align}

Using   (v) in   Lemma \ref{use} and   \eqref{equiv11},  we see that
\begin{align}\label{9nov}
|I_3(s)|\le Ce^{-s} \sum_{j=1}^\k    \frac{|\nu_j| }{1-d_j^2}  \iint | \kappa(d_j^*)| \kappa(d^*_i)\rho dy.
\end{align}

Thanks to \eqref{9nov}, \eqref{conmod},   the classical
inequality $ab\le a^2+b^2$,      (ii) in   Lemma \ref{use}, we conclude
\begin{align}\label{9nov1}
|I_3(s)|\le Ce^{-s}.
\end{align}
Combining the  inequalities  \eqref{8nov2},  \eqref{8nov3},  \eqref{8nov4},  \eqref{9nov1}, we infer that
\begin{align}
|\Pi^{d_i^*}_l({0},{\tilde g(q_1))}|\le  C e^{-s}.\label{s2}
\end{align}
\end{itemize}

\medskip

\textbf{Conclusion of the proof of 
\eqref{est:nu}
and \eqref{est:zeta}}:\\
We first project equation \eqref{eq:qq} with $\Pi_0^{d^*_i}$. 
 Using   \eqref{s0}, 
\eqref{s0000},
 \eqref{s1}, \eqref{s2} and \eqref{equiv11}, 
%and  the projections of $\partial_{\nu} \kappa^*(d_j, \nu_j)$ and $\partial_d \kappa^*(d_j, \nu_j)$,  
 %given in  \cite[Claim 3.2]{MZisol10},
  we conclude
\begin{align}
\displaystyle{\Big|(-1)^{i+1}
d_i'\Pi^{d_i^*}_0
(\partial_d \kappa^*)}+\Pi^{d_i^*}_0(0,R)\Big|
\le  C  \displaystyle{\sum_{j=1, j\neq i}^{\k}\frac{|\nu_j'-\nu_j|+|d'_j|}{1-{d_j}^2}\sqrt{J}}\nonumber\\ 
+C\displaystyle{
\frac{|d_i'|+|\nu_i'-\nu_i|+|\nu_i| }{1-{d_i}^2}\|q\|_{\H}}
+ C \|q\|_{\H}^2 +C \displaystyle{{J}^{1+\delta_1}}+ \displaystyle{\frac{C}{s^{\alpha}}}.\label{vv0}
\end{align}
%By using estimates (A.7) and (A.9) given in the proof of \cite[Appendix A]{CZcpam13}, 
 By using \eqref{s0000} again, \eqref{conmod},  \eqref{s0}, 
the inequalities \eqref{vv0} and \eqref{equiv11}, we obtain
\begin{align}
\left|\frac {\dot \zeta_i }{c_1} - (e^{-\frac{2}{p-1}  (\zeta_{i}-\zeta_{i-1})} - e^{-\frac{2}{p-1}  (\zeta_{i+1}-\zeta_{i})} )\right|  
\le  C  \displaystyle{\sum_{j=1, j\neq i}^{\k}\frac{|\nu_j'-\nu_j|+|d'_j|}{1-{d_j}^2}\sqrt{J}}\nonumber\\ 
+C\displaystyle{
\frac{|d_i'|+|\nu_i'-\nu_i|+|\nu_i| }{1-{d_i}^2}\|q\|_{\H}}
+ C \|q\|_{\H}^2 +C \displaystyle{{J}^{1+\delta_3}}\label{i1}\\
+CJ\bar J+C|d'_i|\bar J+
 \displaystyle{\frac{C}{s^{\alpha}}},\nonumber
\end{align}
for some  $\delta_3>0$, where $d_i=-\tanh \zeta_i$ and  $J$ and $\bar J$ are defined in \eqref{defJ}.  Recalling the definition \eqref{defJ} of $\zeta_i(s)$    and $J$, we write
%$|\zeta_i'(s)|=|\frac{d_i'}{1-d_i^2}|$. Therefore,
% Using ..........., we see that \eqref{est:zeta} is proved.
%
%Recalling the definition \eqref{defJ} of     $\zeta_i(s)$ and $J$, we write
% $|\zeta_i’(s)|= \frac{|d_i’|}{1-d_i^2}$ 
%
%\begin{align}
%\left|\frac {\dot \zeta_i }{c_1} - (e^{-\frac{2}{p-1}  (\zeta_{i}-\zeta_{i-1})} - e^{-\frac{2}{p-1}  (\zeta_{i+1}-\zeta_{i})} )\right|  
%\le  C  \displaystyle{\sum_{j=1, j\neq i}^{\k}\frac{|\nu_j'-\nu_j|+|d'_j|}{1-{d_j}^2}\sqrt{J}}\nonumber\\ 
%+C\displaystyle{
%\frac{|d_i'|+|\nu_i'-\nu_i|+|\nu_i| }{1-{d_i}^2}\|q\|_{\H}}
%+ C \|q\|_{\H}^2 +C \displaystyle{{J}^{1+\delta_3}}\label{i1}\\
%+CJ\bar J+C|d'_i|\bar J+
% \displaystyle{\frac{C}{s^{\alpha}}},\nonumber
%\end{align}
%
% même chose que (106), en rajoutant CJ et en enlevant CJ^{1+\delta_2} et aussi CJ \bar J,
%et on appelle cette nouvelle inégalités (106bis).

\begin{align}
%\left|\frac {\dot \zeta_i }{c_1} - (e^{-\frac{2}{p-1}  (\zeta_{i}-\zeta_{i-1})} - e^{-\frac{2}{p-1}  (\zeta_{i+1}-\zeta_{i})} )\right|  
 \frac{|d_i’|}{1-d_i^2}\le  C  \displaystyle{\sum_{j=1, j\neq i}^{\k}\frac{|\nu_j'-\nu_j|+|d'_j|}{1-{d_j}^2}\sqrt{J}}
+C\displaystyle{
\frac{|d_i'|+|\nu_i'-\nu_i|+|\nu_i| }{1-{d_i}^2}\|q\|_{\H}}\nonumber\\ 
+ C \|q\|_{\H}^2 +C \displaystyle{{J}}
+C|d'_i|\bar J+
 \frac{C}{s^{\alpha}},\label{i199}
\end{align}
\medskip

Now, we use the  projection   $\Pi_1^{d^*_i}$.
 Proceeding as for  estimate \eqref{i1}, we use 
  the projections of $\partial_{\nu} \kappa^*(d_j, \nu_j)$ and $\partial_d \kappa^*(d_j, \nu_j)$,
 given in  \eqref{s0000} and the inequality  \eqref{equiv11}
to deduce that
\begin{align}
\displaystyle{\frac{|\nu_i'-\nu_i|}{1-{d_i}^2}}&\le
C \displaystyle{\frac{|d_i'|}{1-{d_i}^2}}+
C  \displaystyle{\sum_{j=1, j\neq i}^{\k}\frac{|\nu_j'-\nu_j|+|d'_j|}{1-{d_j}^2}\sqrt{J}}
\label{i2}\\
&+C\displaystyle{
\frac{|d_i'|+|\nu_i'-\nu_i|+|\nu_i| }{1-{d_i}^2}\|q\|_{\H}}
+ C \|q\|_{\H}^2 + \displaystyle{CJ}+ \displaystyle{\frac{C}{s^{\alpha}}},\nonumber
 \end{align}
hence from \eqref{i199}, 
\begin{align}
\displaystyle{\frac{|\nu_i'-\nu_i|}{1-{d_i}^2}}\le&
C  \displaystyle{\sum_{j=1, j\neq i}^{\k}\frac{|\nu_j'-\nu_j|+|d'_j|}{1-{d_j}^2}\sqrt{J}}
+C\displaystyle{
\frac{|d_i'|+|\nu_i'-\nu_i|+|\nu_i| }{1-{d_i}^2}\|q\|_{\H}}\label{i222}\\
&+ C \|q\|_{\H}^2 + \displaystyle{CJ}+C|d'_i|\bar J+ \displaystyle{\frac{C}{s^{\alpha}}}.\nonumber
 \end{align}
Summing-up the estimates \eqref{i199} and \eqref{i222}  in $i$,   using   the smallness of $\|q\|_{\H}$, $\bar J$  and $J$, for $s_0$ large enough (see \eqref{conmod}), we  conclude 
\begin{align}
\frac{| \dot \nu_i - \nu_i |}{1-d_i^2} +
 \displaystyle{\frac{|d_i'|}{1-{d_i}^2}}
& \le C  \left( \| q \|_{\q H}^2 + J + \| q \|_{\q H} \bar J\right)+\frac{C}{s^{\alpha}}.
\label{i3}
\end{align}
Clearly, using  \eqref{i1}  and \eqref{i3},  we conclude \eqref{est:nu} and \eqref{est:zeta}.

\bigskip

{\bf Part 2: A Lyapunov functional for equation \eqref{eq:qq}}\\
Let us assume that $s_0\ge 1$ is  large enough  and take $s\in [s_0, \bar s)$.

\medskip

We now  prove estimate \eqref{est:q}.
Like for Claim 4.8 page 2867 in \cite{MZisol10}, the idea is  to  construct a Lyapunov functional for equation \eqref{eq:qq} which is equivalent to the norm squared.
Let us introduce for all $s\in [s_0,\bar s]$,
\begin{equation}\label{defE12}
\begin{array}{rcl}
E_1(s) &=& \frac 12 \varphi(q,q)+R_-(s),\\
%-\iint\q  H(q_1)\rho dy-e^{-\frac{2(p+1)s}{p-1}}\iint \q   F(e^{\frac{2s}{p-1}}(K^*_1+q_1))
%-\q  F(e^{\frac{2s}{p-1}}(K^*_1))\Big)
%\rho dy
%\nonumber\\
%&&\\
%E_2(s) &=& \frac 12\|q\|_{\H}^2
%+R_-(s),\\
%&&\\
%-\iint \q  H(q_1)\rho dy
%-e^{-\frac{2(p+1)s}{p-1}}\iint \q   F(e^{\frac{2s}{p-1}}(K^*_1+q_1))
%\rho dy,\\
E_2(s) &=&E_1(s)% \frac 12 \varphi(q,q)-\iint \q  H(q_1)\rho dy
%-e^{-\frac{2(p+1)s}{p-1}}\iint \q   F(e^{\frac{2s}{p-1}}(K^*_1+q_1))
%\rho dy
+\eta \iint q_1 q_2 \rho dy,
\end{array}
\end{equation}
where $\eta \in (0,1]$
 will be  fixed at the the end
of the proof of \eqref{est:q}
and  $ R_-$, $\varphi$, $\q H(q_1)$  and $ \q F(q_1)$ are given by:
\begin{eqnarray}
%\begin{array}{rcl}
R_-(s)&=&-\iint\q  H(q_1)\rho dy-e^{-\frac{2(p+1)s}{p-1}}\iint\q  F(e^{\frac{2s}{p-1}}(K^*_1+q_1))\rho dy,
\label{defhf}\\
\varphi(r,q) &=&\displaystyle{ \iint \left(r_1'q_1' (1-y^2)-\psi r_1q_1+r_2q_2\right)\rho dy,}
\label{defhf1}\\
\q H(q_1)& = &\displaystyle{\int_0^{q_1}h(\xi) d\xi = \frac{|K^*_1+q_1|^{p+1}}{p+1}-\frac{{|K^*_1|}^{p+1}}{p+1}-{|K^*_1|}^{p-1}K^*_1 q_1 - \frac p2 {|K^*_1|}^{p-1}q_1^2, }\nonumber\\
\nonumber\\
\q F(q_1)& =&\displaystyle{ \int_0^{q_1}\tilde f(\xi) d\xi},\nonumber
%\end{array}
\end{eqnarray}
where $\psi(d,y)$, $  h(q_1)$, $\tilde f(q_1)$ and $K^*_1$ are defined in \eqref{vd2}, \eqref{vd3}, \eqref{vd4}, 
respectively.

%for $\eta$ small enought and $s_0$ large enough, we can prove easily

\medskip
%In order to prove\eqref{est:q} ,
Before starting  the proof of  \eqref{est:q},  let us give in  the following lemma  some useful estimates.
%%%%%%%%%%%%%%%%%%%%%%%%%%%%%%%%%%%%%%%%%%%%%
\begin{lem}\label{lemquad}
For all $s\in [s_0, \bar s)$, we have:
\begin{align}
C^{-1}  \|q\|_{\H}^2\le  \varphi(q,q)\le  C\|q\|_{\H}^2,
\label{eq2}\\
\left|\iint \q H(q_1) \rho dy\right|\le C \|q\|_{\H}^{1+\min(p,2)}\le 
 C s_0^{(1-\min(p,2))/2}\|q\|_{\H}^2,\label{eq2a}\\
%\left|\iint q_1 q_2 \rho dy \right|\le \|q\|_{\q H}^2.\label{eq2c}\\
\left|
e^{-\frac{2(p+1)s}{p-1}}\iint \q   F(e^{\frac{2s}{p-1}}(K^*_1+q_1))
\rho dy\right|\le \displaystyle{\frac{C}{s^{\alpha}}}, \label{eq2b}\\
 \Big|\iint (K^*_1+q_1) \tilde f(q_1)
\rho dy\Big| \le \displaystyle{
\frac{C}{s^{\alpha}}}. \label{eq2d}
\end{align}
\end{lem}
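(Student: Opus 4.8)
The plan is to split the four bounds into two groups. The estimates \eqref{eq2} and \eqref{eq2a} involve neither $f$ nor $g$, so they are exactly the quantities controlled in the unperturbed analysis; the estimates \eqref{eq2b} and \eqref{eq2d} are the genuinely new contributions of the source term $f$ under assumption $(H)$, and these are what I would spend the real effort on. For \eqref{eq2}, I would note that the bilinear form $\varphi$ in \eqref{defhf1} and the potential $\psi$ in \eqref{vd2} coincide with those of the unperturbed problem, so the coercivity follows as in Claim 4.8 of \cite{MZisol10}: the negative and null directions of $r\mapsto\varphi(r,r)$ attached to each soliton are killed by the orthogonality conditions $\pp_l^{d_i^*(s)}(q)=0$ imposed in the modulation \eqref{defq}, while the smallness and separation recorded in \eqref{conmod} make the cross terms between distinct solitons negligible, yielding both inequalities. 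For \eqref{eq2a}, $\q H(q_1)$ is the second-order Taylor remainder of $\xi\mapsto|\xi|^{p+1}/(p+1)$ at $K^*_1$, so I would use the pointwise bounds $|\q H(q_1)|\le C|q_1|^{p+1}$ when $1<p\le 2$ and $|\q H(q_1)|\le C(|q_1|^{p+1}+|K^*_1|^{p-2}|q_1|^3)$ when $p>2$; integrating against $\rho\,dy$ and invoking the Hardy--Sobolev inequality of Lemma \ref{use}(i) together with the soliton bounds (ii) and (iv) produces $C\|q\|_{\H}^{1+\min(p,2)}$, and the second inequality then follows from $\|q\|_{\H}\le s_0^{-1/2}$ in \eqref{conmod}.

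For \eqref{eq2b}, I would first rewrite the quantity in terms of the primitive $F(u)=\int_0^u f(\xi)\,d\xi$ of $f$ and bound it using $(H)$, obtaining $|F(u)|\le C(1+|u|+|u|^{p+1}/\log(2+u^2)^\alpha)$ for all $u$ (the subcritical logarithmic weight survives integration up to constants and lower-order terms). Substituting $u=e^{\frac{2s}{p-1}}(K^*_1+q_1)$ and distributing the prefactor $e^{-\frac{2(p+1)s}{p-1}}$, the constant and linear parts give exponentially small contributions of order $e^{-\frac{2ps}{p-1}}$, while the critical part reproduces exactly the integral of $|K^*_1+q_1|^{p}/\log(2+e^{\frac{4s}{p-1}}(K^*_1+q_1)^2)^\alpha$ already encountered in \eqref{tv1}. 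I would then run the same dichotomy $(-1,1)=A_1(s)\cup A_2(s)$: on $A_1(s)$ the smallness of $|K^*_1+q_1|$ forces exponential decay, whereas on $A_2(s)$ the bound $\log\ge\frac{2s}{p-1}$ extracts the factor $s^{-\alpha}$ and the remaining full-interval integral $\iint|K^*_1+q_1|^{p+1}\rho\,dy$ is bounded uniformly in $s$ via Lemma \ref{use}(ii) for the solitons and Hardy--Sobolev for $q_1$, which gives \eqref{eq2b}. For \eqref{eq2d} I would take the pointwise bound on $\tilde f$ from $(H)$ exploited in \eqref{tv1}, multiply by $|K^*_1+q_1|$ in place of the weight $\kappa(d_i^*)$ used to derive \eqref{s1}, and repeat the identical $A_1/A_2$ argument of \eqref{t1}--\eqref{t222}, so that \eqref{eq2d} follows with the same $s^{-\alpha}$ gain.

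Since the coercivity \eqref{eq2} is imported wholesale from the unperturbed theory, the only genuinely new work lies in \eqref{eq2b} and \eqref{eq2d}, and I expect the main obstacle to be twofold. First, one must verify that integrating $f$ does not spoil the logarithmic gain, i.e. that the primitive still obeys $|F(u)|\le C\,|u|^{p+1}/\log(2+u^2)^\alpha$ for large $|u|$; this is an elementary but essential check (integration by parts or L'H\^opital confirms that $u^{p+1}/\log(2+u^2)^\alpha$ is, up to the factor $p+1$ and lower-order terms, an antiderivative of the critical integrand). Second, one must control the full-interval integral $\iint|K^*_1+q_1|^{p+1}\rho\,dy$ uniformly in $s$, which is exactly where the $L^{p+1}_\rho$ soliton bound of Lemma \ref{use}(ii) and the Hardy--Sobolev embedding of Lemma \ref{use}(i), combined with $\|q\|_{\H}\le s_0^{-1/2}$, are indispensable. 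All remaining steps are routine repetitions of the computation \eqref{tv1}--\eqref{t222}.
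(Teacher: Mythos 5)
Your proposal is correct and takes essentially the same route as the paper: estimates \eqref{eq2} and \eqref{eq2a} are imported from the unperturbed analysis of \cite{MZisol10} under the smallness conditions \eqref{conmod}, while \eqref{eq2b} and \eqref{eq2d} are reduced, exactly as in the paper's pointwise bound \eqref{vF}, to the same $A_1(s)/A_2(s)$ dichotomy used to prove \eqref{s1}, followed by integration via Lemma \ref{use} and the uniform bounds on $\iint |K^*_1|^{p+1}\rho\,dy$ and $\iint |q_1|^{p+1}\rho\,dy$. The only difference is one of presentation: you make explicit the elementary check that the primitive of $f$ inherits the logarithmic gain from $(H)$, a step the paper compresses into the phrase ``using similar arguments to the proof of \eqref{s1}''.
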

%%%%%%%%%%%%%%%%%%%%%%%%%%%%%%%%%%%%%%%%%%%%%
%%%%%%%%%%%%%%%%%%%%%%%%%%%%%%%%%%%%%%%%%%%%%
\begin{proof} {}~ \\
-  According to  \eqref{conmod} and \eqref{equiv11},  we know that    $\sum_{i=1}^k \frac{|\nu_i|}{1-|d_i^*|}$ and  $ \|q\|_{\H}$  are   small enough, for $s_0$  large enough. Then
we can adapt with no difficulty the proof given in \cite{MZisol10} (page 2898) to deduce the estimates   \eqref{eq2} and \eqref{eq2a}.\\
%- The estimate  \eqref{eq2c} is a direct consequence of Cauchy-Schwarz  inequality.\\
- %Now, we control  the corresponding terms of the functions $\tilde f$ and $\q   F$ in \eqref{eq2b}  and  \eqref{eq2d}.
Clearly,  using    similar arguments to the proof of  \eqref{s1}, we prove   the following estimate:
\begin{equation}
 \Big|e^{-\frac{2(p+1)s}{p-1}} \q   F(e^{\frac{2s}{p-1}}(K^*_1+q_1))|
\le Ce^{-\frac{p+1}{p-1}s}+\frac{C}{s^{\alpha}}( |K^*_1|^{p+1}+ |q_1|^{p+1}).\label{vF}
\end{equation}
Thanks to   (ii) and (iv)  in   Lemma \ref{use} and  \eqref{conmod},    we conclude
 \begin{equation}\label{9nov5}
\iint |K^*_1|^{p+1}\rho dy\le C\sum_{i=1}^k \iint   |\kappa (d^*_i)|^{p+1}\rho dy\le C.
\end{equation}
By integrating  inequality \eqref{vF} over $(-1,1)$,  and taking into account \eqref{9nov5},  \eqref{conmod} and  (i)   in   Lemma \ref{use},    we  obtain the estimate \eqref{eq2b}.\\
- Proceeding  similarly  as for   \eqref{eq2b}  and using  estimates \eqref{conmod} and \eqref{equiv11}, we  easily deduce \eqref{eq2d}. This concludes the proof of
Lemma \ref{lemquad}.
%\begin{align}
%\big|(K^*_1+q_1) \tilde f(q_1)
%\big| \le \displaystyle{\frac{C}{s^{\alpha}} \|q\|_{\H}^{2}
%+\frac{C}{s^{\alpha}}} \label{eq2d}
%\end{align}
\end{proof}

%%%%%%%%%%%%%%%%%%%%%%%%%%%%%%%%%%%%%%%%%%%%%%%%%%%%%%%%%%%%%%%%%%%%%%%%%%%%%%
%
%%%%%%%%%%%%%%%%%%%%%%%%%%%%%%%%%%
%%%%%%%%%%%%%%%%%%%%%%%%%%%%%%%%%%%%%%%%%%%%%%%%%%%%%%%%%%%%%%
%%%%%%%%%%%%%%%%%%%%%%%%%%%%%%%%%%%%%%%%%%%%%%%%%%%%%%%%%%%%%%%%%%%
%{Analysis of the linearization of equation \eqref{eqw1} around a sum of $\kappa^*(d_i,\nu_i)$}\label{appproj9}
%%%%%%%%%%%%%%%%%%%%%%%%%%%%%%%%%%%%%%%%%%%%%%%%%%%%%%%%%%%%%%%%%%%
%%%%%%%%%%%%%%%%%%%%%%%%%%%%%%%%%%%%%%%%%%%%%%%%%%%%%%%%%%%%%%%%%%%
%This Appendix is devoted to the study of the dynamics of equation \eqref{eqw1}  near a decoupled sum of $\pm\kappa^*(d_i, \nu_i)$ and to the proof of Claim \ref{lemlyap}. This is in 

\medskip

In order to  construct a Lyapunov functional for equation \eqref{eq:qq}, 
we  need  to prove the following estimates:
%%%%%%%%%%%%%%%%%%%%%%%%%%%%%%%%%%%%%%%%%%%%%
\begin{lem}\label{lemproj*} For all $s\in [s_0, \bar s)$, we have:\\
{\bf (i)(Control of the time derivative of $E_1$)}
\begin{align}\label{anas5}
\frac{d}{ds} E_1 ( s) \le  -\displaystyle{\frac 2{p-1}}\iint q_2^2
\frac \rho{1-y^2} dy 
+ C  ( \| q \|_{\q H} +\bar J)
\varphi(q,q)+CJ \sqrt{\varphi(q,q)}
 + C J^{\bar p}+\frac{C}{s^{\alpha}} .
\end{align}
%
%
%\begin{align}\label{anas5}
%\frac{d}{ds} E_1 ( s) \le  -\displaystyle{\frac 3{p-1}}\iint q_2^2
%\frac \rho{1-y^2} dy + \frac1{100} \varphi (q,q) + C J
%+\frac{C}{s^{\alpha}}.
%\end{align}
{\bf(ii) (Control of the time derivative of   $\iint q_1q_2 \rho dy$)}
\begin{align}\label{v17}
\frac d{ds}\iint q_1q_2 \rho dy \le&
-\frac 7{10}\varphi(q,q) +C \iint q_{2}^2 \frac \rho{1-y^2}dy+CJ \sqrt{\varphi(q,q)}+ CJ^{\bar p}+\frac{C}{s^{\alpha}}.
\end{align}
%(iii)(A rough estimate)
%\begin{align}
%\frac{d}{ds}E_1(s)\le&-\frac 3{p-1}\int q_2^2 \frac \rho{1-y^2} dy+
%C\|q\|_{\H}^2 
% + C J_m^2+ \frac{C}{s^{\alpha}},\label{eq1}\
%\end{align}
\end{lem}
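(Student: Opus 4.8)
The plan is to differentiate the two functionals appearing in \eqref{defE12} along the flow, substitute the evolution equation \eqref{eq:qq} for $(q_1,q_2)$, and then sort the resulting terms into two groups: those already controlled in the unperturbed case \cite{MZisol10,CZcpam13}, which I would import verbatim, and the genuinely new contributions coming from $\tilde f$ and $\tilde g$, which are the only ones I would actually estimate here. Throughout I would use the bounds of Lemma \ref{use} on $\kappa$ and $\kappa^*$, the parameter dynamics \eqref{est:nu}--\eqref{est:zeta}, the smallness \eqref{conmod}, and the four estimates of Lemma \ref{lemquad}.

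For part (i): since $\tfrac{d}{ds}\big[\tfrac12\varphi(q,q)\big]=\varphi(\ps q,q)-\tfrac12\iint \ps\psi\, q_1^2\rho\,dy$, I would plug in the two components of \eqref{eq:qq}. After one integration by parts using $\q L r=\tfrac1\rho\partial_y(\rho(1-y^2)\py r)$, the terms $\q L q_1+\psi q_1$ produced by the second component cancel the gradient and potential pieces of $\varphi$, leaving only modulation remainders (the soliton derivatives $\pnu\kappa^*,\partial_d\kappa^*$). The damping $-\tfrac{p+3}{p-1}q_2-2y\py q_2$ tested against $q_2$, together with the identity $\rho'=-\tfrac{4y}{p-1}\tfrac{\rho}{1-y^2}$, yields after integrating $-2y q_2\py q_2$ by parts the coercive dissipative term $-\tfrac{2}{p-1}\iint q_2^2\tfrac{\rho}{1-y^2}\,dy$ (keeping a convenient fraction of the full dissipation). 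The $\ps\psi$ term and the modulation remainders are absorbed into $C(\|q\|_{\q H}+\bar J)\varphi(q,q)$ via \eqref{est:nu}--\eqref{est:zeta} and Lemma \ref{use}, while the interaction term $R$ gives $CJ\sqrt{\varphi(q,q)}+CJ^{\bar p}$ exactly as in the unperturbed case. The point of $R_-$ is that its $s$-derivative is designed to cancel the leading $h(q_1)$ and $\tilde f(q_1)$ contributions: differentiating $-\iint\q H(q_1)\rho\,dy$ produces $-\iint h(q_1)\ps q_1\rho\,dy$, and since $\ps q_1=q_2-(\text{mod})$ the piece $-\iint h(q_1)q_2\rho\,dy$ kills the $+\iint q_2 h(q_1)\rho\,dy$ from the energy; likewise, because the primitive in the $\q F$-term satisfies $\partial_{q_1}\big[e^{-\frac{2(p+1)s}{p-1}}\q F(e^{\frac{2s}{p-1}}(K^*_1+q_1))\big]=\tilde f(q_1)$, differentiating it cancels $+\iint q_2\tilde f(q_1)\rho\,dy$. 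What survives is the explicit $\partial_s$ of the prefactor $e^{-\frac{2(p+1)s}{p-1}}$, which is bounded by $\tfrac{C}{s^\alpha}$ through \eqref{eq2b}. The last term $\iint q_2\tilde g(q_1)\rho\,dy$ is handled by Cauchy--Schwarz and the bound \eqref{8novbis}: since $\tilde g$ carries a factor $e^{-s}$, it contributes $O(e^{-s})$, absorbed into $\tfrac{C}{s^\alpha}$.

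For part (ii): writing $\tfrac{d}{ds}\iint q_1q_2\rho\,dy=\iint(\ps q_1)q_2\rho\,dy+\iint q_1(\ps q_2)\rho\,dy$ and substituting \eqref{eq:qq}, the leading contribution is $\iint q_2^2\rho\,dy+\iint q_1(\q L q_1+\psi q_1)\rho\,dy=2\iint q_2^2\rho\,dy-\varphi(q,q)$, where I use $\iint q_1\q L q_1\rho\,dy=-\iint (q_1')^2(1-y^2)\rho\,dy$ and the identity $\iint(q_1')^2(1-y^2)\rho\,dy-\iint\psi q_1^2\rho\,dy=\varphi(q,q)-\iint q_2^2\rho\,dy$. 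The damping and modulation cross-terms, estimated by Cauchy--Schwarz with a small parameter, are arranged to weaken the coefficient of $-\varphi(q,q)$ to $-\tfrac{7}{10}$ and to collect all surviving $q_2$ mass into $C\iint q_2^2\tfrac{\rho}{1-y^2}\,dy$ (using $\iint q_2^2\rho\le\iint q_2^2\tfrac{\rho}{1-y^2}$). The nonlinear and interaction terms again give $CJ\sqrt{\varphi(q,q)}+CJ^{\bar p}$, while $\iint q_1\tilde f(q_1)\rho\,dy$ and $\iint q_1\tilde g(q_1)\rho\,dy$ are bounded by $\tfrac{C}{s^\alpha}$ exactly as in \eqref{eq2d} and the $\tilde g$ estimate above.

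The hard part will be the control of the $\tilde f$ contribution. Because hypothesis $(H)$ only provides a logarithmic, quasi-critical gain, the naive $L^{p+1}_\rho$ bound fails; one must instead exploit the precise structure of $R_-$ (so that the bulk of the $\tilde f$ term cancels) together with the dyadic splitting of $(-1,1)$ into the sets $A_1(s),A_2(s)$ used to prove \eqref{s1}, where on $A_2(s)$ the logarithm is bounded below by $\tfrac{2s}{p-1}$ and furnishes exactly the decay $s^{-\alpha}$. Everything else is either a verbatim import from \cite[Claim 4.8]{MZisol10} and \cite[Proposition 3.4]{CZcpam13} or an exponentially small $g$-term, so the new analysis is concentrated entirely in these perturbation estimates.
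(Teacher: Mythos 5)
Your proposal is correct and follows essentially the same route as the paper's proof: differentiate $\varphi(q,q)$ and $R_-$ so that the $h(q_1)$ and $\tilde f(q_1)$ terms tested against $q_2$ cancel, use the dissipation identity $\varphi(\ll q,q)=-\frac{4}{p-1}\iint q_2^2\frac{\rho}{1-y^2}\,dy$ (which you re-derive rather than cite from \cite{MZisol10}), absorb the $R$, modulation and $\tilde g$ contributions into the dissipation and the $J\sqrt{\varphi(q,q)}$, $J^{\bar p}$, $s^{-\alpha}$ terms, and arrange the $-\frac{7}{10}\varphi(q,q)$ coefficient in part (ii) exactly as the paper does. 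The only deviations are cosmetic bookkeeping: a slightly different attribution of which terms produce $CJ\sqrt{\varphi(q,q)}$ versus $CJ^{\bar p}$, and the unlisted but harmless chain-rule term $-\frac{2}{p-1}\iint(K^*_1+q_1)\tilde f(q_1)\rho\,dy$ arising from the $\q F$-piece of $R_-$, which is precisely what estimate \eqref{eq2d} in your declared toolkit controls.
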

%%%%%%%%%%%%%%%%%%%%%%%%%%%%%%%%%%%%%%%%%%%%%
%%%%%%%%%%%%%%%%%%%%%%%%%%%%%%%%%%%%%%%%%%%%%
\begin{proof}{}~ \\
{\bf{Proof of (i) (Control of the time derivative of the terms of $E_1$)}.}\\
 Using the  definitions \eqref{defhf1}   of  $\varphi$ and \eqref{vd2}   of  $\psi$, we write
\begin{equation}\label{anas}
\frac12\frac{d}{ds} \varphi ( q, q) =
 \varphi (\partial_s q, q))+I_4(s),
\end{equation}
where
\begin{equation}\label{v13j}
I_4(s)= \frac{p(p-1)}2\sum_{j=1}^\k  (-1)^{j+1}\iint
\Big(  d_j' \partial_d \kappa^*_1 +\nu_j' \partial_{\nu} \kappa^*_1\Big)
 |K^*_1|^{p-3}K^*_1 q_1^2 \rho dy.
\end{equation}
 Using equation \eqref{eq:qq},  we write
\begin{eqnarray}
\varphi(\partial_s q, q)&=& \varphi(\ll q,q)\underbrace{-\sum_{j=1}^\k (-1)^j[ (\nu_j-\nu_j') \varphi(\pnu \kappa^*,q)
+d_j' \varphi(\partial_d \kappa^*,q)]}_{I_5(s)}\nonumber\\
&&+ \varphi((0,R),q)+
\iint  q_2
 \tilde{g}(q_1) \rho dy+
\iint  q_2
\Big( h(q_1)+ \tilde{f}(q_1)\Big) \rho dy.\label{mohsen}
\end{eqnarray}

\medskip

In the remaining part of the  proof, we need   some estimates  proved    in  \cite[Appendix C]{MZisol10}. 
For that reason, we will recall  the  following  estimates   (C.24) and   (C.26)
  from that paper and   (C.39) in  \cite[Appendix C]{MZajm11},
true under hypothesis \eqref{conmod}
\begin{equation}\label{estR}
\begin{array}{ll}
\varphi(\ll q,q) = -\displaystyle{\frac 4{p-1}}\iint q_2^2\frac \rho{1-y^2} dy,&
\\
\iint \kappa(d_i^*,y)|f(q_1)|\rho dy \le C\|q\|_{\H}^2,&
|R_-|\le C\|q\|_{\H}^{\bar p+1},\\
\Big| \iint \partial_d \kappa^*_1(d_i, \nu_i) |K^*_1|^{p-3}K^*_1 q_1^2 \rho dy\Big|\le &\displaystyle{\frac C{1-{d_i^*}^2}} \|q\|_{\H}^2,
\\
\Big| \iint \partial_{\nu} \kappa^*_1(d_i, \nu_i) |K^*_1|^{p-3}K^*_1 q_1^2 \rho dy\Big|\le &\displaystyle{\frac C{1-{d_i^*}^2}} \|q\|_{\H}^2,
\\
 \iint R^2  (1-y^2)\rho dy\le   C  (\hat J^*)^{2},&
%R_-\equiv-\iint \F(q_1) \rho dy \le C\|q\|_{\H}^{\bar p+1}
\end{array}
\end{equation}
where  $\bar p$ is defined \eqref{defpb}.
%Using 
%\aref{bordev},
%%(iii) of Claim \ref{clnond'}, 
%\aref{hs} and (ii) of Lemma \ref{lemtech} below, we see that
%\begin{equation}\label{estR}
%\begin{array}{ll}
%\varphi(\ll q,q) = -\displaystyle{\frac 4{p-1}}\iint q_2^2
%\frac \rho{1-y^2} dy.
%%\iint R^2 \rho(1-y^2) dy\le C J^*,\\
%%\iint \kappa(d_i^*,y)|f(q_1)|\rho dy \le C\|q\|_{\H}^2,\\
%%|R_-|\le C\|q\|_{\H}^{\bar p+1}
%%R_-\equiv-\iint \F(q_1) \rho dy \le C\|q\|_{\H}^{\bar p+1}
%\end{array}
%\end{equation}
%Note also that,    from      
% (iv) in   Lemma \ref{use},
%  \eqref{conmod},
%(C.39) in \cite{MZajm11},  we get
%\begin{equation}
% \iint R^2  (1-y^2)\rho dy\le   C (\hat J^*)^{2}.\label{barJ}
%\end{equation}
Since we have from    \eqref{equiv11} and   \eqref{equiv112} $ \hat J^*\le  C( J^*)^{\frac{\bar p}2} \le     C J^{\frac{\bar p}2},$
 we get from \eqref{estR}
\begin{eqnarray}
\varphi((0,R),q) 
= \iint R q_2 \rho dy &\le & \frac 1{p-1} \iint q_2^2 \frac \rho{1-y^2} dy + C \iint R^2  (1-y^2)\rho dy\nonumber\\
&\le & \frac 1{p-1}  \iint q_2^2 \frac \rho{1-y^2} dy + C J^{\bar p}.\label{estRR}
\end{eqnarray}
Using \eqref{estR},  \eqref{eq2}, \eqref{conmod}, \eqref{equiv11}, \eqref{est:nu} and \eqref{est:zeta},   we write
%\begin{equation}
%|I_1(s)|
%\le  \sum_{j=1}^\k 
%\frac{C\|q\|^2_{\H}}{1-{d_j^*}^2}\left(|\nu_j|+|\nu_j'-\nu_j|+|d_j'|\right).\nonumber
%\end{equation}
%Using (C.28) in \cite{MZisol10},  By exploiting  \eqref{est:nu} and \eqref{est:zeta},   we write
%\begin{equation}
%|I_2(s)|\le  
%C\|q\|_{\H} \Big(   \| q \|_{\q H}^2 + J + \| q \|_{\q H} \bar J+\frac{C}{s^{2\alpha}}\Big).
%\end{equation}
%Then
\begin{equation}
|I_4(s)|+|I_5(s)|\le   C   \| q \|_{\q H}^3 + C\| q \|_{\q H}  J+ C\| q \|^2_{\q H} \bar J +
\frac{C}{s^{\alpha}}.\label{v10}
\end{equation}
Moreover, by using 
\eqref{8novbis}, we write
\begin{align}
\big|\iint  q_2
 \tilde{g}(q_1)\rho dy\big|  
\le \ \ \ \ \ & {   Ce^{-s}\iint 
  \big(|\partial_yq_1|+|q_2|+|q_1|+|q_1|^{\frac{p+1}2}\big)|q_2| \rho dy}\nonumber\\
&+{
Ce^{-s}\iint  \big(|\partial_yK^*_1|+|K^*_1|+|K^*_1|^{\frac{p+1}2}\big)|q_2| \rho dy}\label{10nov}\\
&+{Ce^{-s} 
\sum_{j=1}^\k  | \nu_j| \iint
|\partial_{\nu} \kappa^*_1(d_j, \nu_j)| \ |q_2| \rho dy}+
Ce^{-\frac{2ps}{p-1}}.\nonumber
\end{align}
Proceeding similarly as for
%In a similar way to the treatment of
  $I_1(s)$,   $I_2(s)$ and   $I_3(s)$ defined in \eqref{8nov2},  we get
\begin{align}
\big|\iint  q_2
 \tilde{g}(q_1)\rho dy\big|    \leq  C e^{-s} \iint q_2^2\frac{\rho}{ 1-y^2} dy
+C e^{-s}.\label{v11}
\end{align}
Gathering the estimates  \eqref{anas},  \eqref{mohsen},  \eqref{estR},  \eqref{estRR}, 
 \eqref{v10} and
 \eqref{v11}, we get for $s_0$ large enough,
\begin{align}\label{anas1}
\frac12\frac{d}{ds} \varphi ( q, q) \le&  -\displaystyle{\frac 2{p-1}}\iint q_2^2
\frac \rho{1-y^2} dy+ \iint q_2\Big(h(q_1)+ \tilde{f}(q_1)\Big)\rho  dy \\
&+ C   \| q \|_{\q H}^3 + C\| q \|_{\q H}  J+ C\| q \|^2_{\q H} \bar J+ C J^{\bar p}+\frac{C}{s^{\alpha}}.\nonumber
\end{align}

\medskip

Using the definition of $R_-,$ given in \eqref{defhf}, we write 
\begin{eqnarray}\label{R1}
R_-'(s)&=&- \iint  \Big(\partial_s q_1+\partial_sK^*_1\Big)
\Big( h(q_1)+ \tilde{f}(q_1)\Big) \rho dy-\frac{2}{p-1} \iint  (q_1+K^*_1)\tilde{f}(q_1) \rho dy\nonumber\\
&&+\frac{2(p+1)}{p-1}e^{-\frac{2(p+1)s}{p-1}}
\iint \q   F(e^{\frac{2s}{p-1}}(K^*_1+q_1))
%-\q  F(e^{\frac{2s}{p-1}}K^*_1)\Big)
\rho dy-I_4(s),%\\
%\iint\q  F(e^{\frac{2s}{p-1}}(K^*_1+q_1))\rho dy,\\
%+\frac{p(p-1)}2\iint |K^*_1|^{p-3}K^*_1(K^*_1)^{'}q_1^2\rho dy.\nonumber
\end{eqnarray}
where $I_4(s)$ is defined in \eqref{v13j}. By exploiting  the identity \eqref{q21},  identity \eqref{R1} becomes 
\begin{eqnarray}\label{R2}
R_-'(s)&=&- \iint  q_2
\Big( h(q_1)+ \tilde{f}(q_1)\Big) \rho dy\underbrace{-\frac{2}{p-1} \iint  (q_1+K^*_1)\tilde{f}(q_1) \rho dy}_{I_6(s)}\nonumber\\
&&+\underbrace{\frac{2(p+1)}{p-1}e^{-\frac{2(p+1)s}{p-1}}
\iint \q   F(e^{\frac{2s}{p-1}}(K^*_1+q_1))
\rho dy}_{I_7(s)}
\\
&&+ \underbrace{\sum_{j=1}^\k  (-1)^j \nu_j \iint\partial_{\nu} \kappa^*_1\big[ h(q_1)
+ \tilde{f}(q_1)\big]  \rho dy}_{I_8(s)}-I_4(s).\nonumber
\end{eqnarray} 
Thanks to \eqref{eq2b}, \eqref{eq2d} and \eqref{conmod}, we write
\begin{align}
|I_6(s)|+|I_7(s)|\le 
  \displaystyle{\frac{C}{s^{\alpha}}.} \label{eqvd}
\end{align}
Finally, it remains only to control the term  $I_8(s)$. 
By exploiting   (v) in   Lemma \ref{use},  we write
%(3.41) in \cite{CZcpam13},
 %we write
\begin{align}
|I_8(s)|\le C
  \sum_{j=1}^\k  \frac{|\nu_j|}{1-|d_j^*|} \iint  \kappa (d_j^*) \Big( |h(q_1)|
+ |\tilde{f}(q_1)|\Big)  \rho dy.
\end{align}
Similarly  as for   \eqref{equiv11},
\eqref{s1}, \eqref{s0} and \eqref{conmod}, we easily  prove that  
\begin{align}\label{eqvd1}
|I_8(s)|\le C  \sum_{j=1}^\k  \frac{|\nu_j|}{1-|d_j^*|} \Big(   \|q\|_{\H}^{2}
+\frac{C}{s^{\alpha}}     \Big)\le C   \|q\|_{\H}^{2}\bar  J 
+\frac{C}{s^{\alpha}}    .
\end{align}
Gathering the estimates \eqref{R2}, \eqref{eqvd}, \eqref{eqvd1}, \eqref{v10}, \eqref{conmod} and 
\eqref{defJ},
we get 
\begin{align}\label{eq0}
R_-'(s)\le- \iint  q_2
\Big( h(q_1)+ \tilde{f}(q_1)\Big) \rho dy
+ C   \| q \|_{\q H}^3 + C\| q \|_{\q H}  J+ C\| q \|^2_{\q H} \bar J +\frac{C}{s^{\alpha}}.
\end{align} 
Thanks to \eqref{anas1},  \eqref{eq0}  and  
 the definition of $E_1(s),$ given in 
\eqref{defE12}, we clearly have
\begin{align}\label{anas05}
\frac{d}{ds} E_1 ( s) \le  -\displaystyle{\frac 2{p-1}}\iint q_2^2
\frac \rho{1-y^2} dy 
+ C  \Big( \| q \|_{\q H}^3 + \| q \|_{\q H}  J+ \| q \|^2_{\q H} \bar J+  J^{\bar p}\Big)+\frac{C}{s^{\alpha}} .
\end{align}
According to  \eqref{conmod},  we can see that    $ \|q\|_{\H}$  can be   made    small enough provided that $s_0$ is large enough.
As a consequence of      \eqref{eq2} and \eqref{anas05},   we get the desired estimate \eqref{anas5}. 
This ends
the proof of (i) in Lemma \ref{lemproj*}.

\bigskip

%%%%%%%%%%%%%%%%
%%%%%%%%%%%%%%%%%%%%%%%%%%%%%%%%%%%%%%%%%%%%%%%%%%%%%%%%%%%%%%%%%%%%%%%%%%%%
{\bf Proof of (ii): Control of the time derivative of   $\iint q_1q_2 \rho dy$}

%%%%%%%%%%%%%%%%%%%%%%%%%%%%%%%%%%%%%%%%%%%%%%%%%%%%%%%%%%%%%%%%%%%%%%%%%%%%
%%%%%%%%%%%%%%%%%%%%%%%%%%%%%%%%%%%%%%%%%%%%%%%%%%%%%%%%%%%%%%%%%%%%%%%%%%%%
Using  equation \eqref{eq:qq}  and the identity  \eqref{q21}, we write
\begin{eqnarray*}
&&\frac d{ds} \iint q_1 q_2 \rho dy = \iint \ps q_1 q_2 \rho dy + \iint \ps q_2 q_1 \rho dy\\
&=& \underbrace{- \sum_{j=1}^\k(-1)^j\left(d_j'\iint \partial_{d} \kappa^* \cdot (q_2,q_1)\rho dy + (\nu_j'- \nu_j)\iint \partial_{\nu} \kappa^*\cdot (q_2, q_1) \rho dy\right)}_{I_9(s)}\\
&&+\iint q_2^2 \rho dy + \iint q_1 (  \q L  q_1 + \psi q_1 - \frac{p+3}{p-1} q_2 - 2y \py q_2 +h(q_1)+\tilde f(q_1)+\tilde 
g(q_1)+R)\rho dy,
\end{eqnarray*}
where the dot ``$\cdot$'' stands for the usual inner product  in $\R^n$. 
 
Let us first recall the following  estimates proved in pages 2904 and 2905 of 
\cite{MZisol10}: 
% and using the coercivity estimate in (ii) of Lemma \ref{lemdecomp9}, 
\[
\begin{array}{l}
\iint q_2^2 \rho dy \le \iint q_2^2 \frac \rho{1-y^2} dy,\;\;\iint q_1( \q L  q_1 + \psi q_1)\rho dy \le - \varphi (q,q) + \iint q_2^2 \frac \rho{1-y^2} dy,\\
\\
\left|- \frac{p+3}{p-1} \iint q_1 q_2 \rho dy - 2 \iint q_1 \py q_2 \rho dy\right|\le \frac{1}{10}  \varphi (q,q)+C \iint q_2^2 \frac \rho{1-y^2} dy,\\
\\
\left|\iint q_1 h(q_1)\rho dy\right| \le \frac{1}{10} \varphi (q,q).
\end{array}
\]
Furthermore, by using  the Hardy Sobolev inequality stated in item (i) of Lemma \ref{use}, \eqref{eq2},  \eqref{estR} and \eqref{equiv11}, we conclude 
$$\iint q_1R\rho dy \le \frac{1}{10}\varphi (q,q) + C J^{\bar p}.$$
Moreover, arguing as in \eqref{s1} and \eqref{s2}, we obtain 
\[
\begin{array}{l}
\left|\iint q_1 \tilde f(q_1)\rho dy\right| \le  \displaystyle{\frac{C}{s^{\alpha}}},\qquad 
\left|\iint q_1 \tilde g(q_1)\rho dy\right|\le  
Ce^{-s}.
\end{array}
\]
Note also that,    from   (ii) and   
 (v) in   Lemma \ref{use},
     \eqref{conmod}, \eqref{equiv11},
the Cauchy-Schwarz inequality, we write
\begin{align}\label{oct1}
\big|I_9(s)\big|\le C\sum_{j=1}^{k} \frac{|d_j'|+|\nu_j'- \nu_j|}{1-d_j^2}\|q\|_{\H}.
\end{align}
By exploiting \eqref{oct1},  \eqref{est:nu} and  \eqref{est:zeta}, 
we deduce that
\begin{align}
\big|I_9(s)\big|
\le  C \Big(  \| q \|_{\q H}^3 + \| q \|_{\q H}  J+ \| q \|^2_{\q H} \bar J+\frac{1}{s^{\alpha}}\Big) .
\end{align}
According to  \eqref{conmod},  we can see that  $\bar J$ and  $ \|q\|_{\H}$  can be   made small enough provided that $s_0$ is large enough. Consequently, collecting
 the above estimates   and \eqref{eq2}, we  obtain the desired estimate \eqref{v17}, and this ends
the proof of (ii) in Lemma \ref{lemproj*}.

\bigskip

\end{proof}
%\Box
%%%%%%%%%%%%%%%%%%%%%%%%%%%%%%%%%% 
%
Now, we are able to prove \eqref{est:q}.\\
{\bf Proof of  \eqref{est:q}:}
Let $\eta \in (0,1)$. 
By exploiting \eqref{anas5} and \eqref{v17} and  the definition of $E_2(s),$ given in \eqref{defE12}, we have
\begin{align}\label{v18}
\frac{d}{ds} E_2 ( s)+\frac {\eta}2  E_2(s) \le -\Big(\frac {\eta}{5}- C  ( \| q \|_{\q H} +\bar J)\Big)\varphi(q,q)+\frac{\eta^2}2
\iint q_1q_2 \rho dy+\frac{\eta}2R_-(s) \nonumber\\
 -(\frac 2{p-1}-C\eta )\iint q_2^2
\frac \rho{1-y^2} dy+CJ\sqrt{\varphi(q,q)}
 + C J^{\bar p}+\frac{C}{s^{\alpha}}.
\end{align}
Observing   inequalities  \eqref{eq2a},  \eqref{eq2b},  \eqref{eq2}, \eqref{conmod} along with the Cauchy-Schwarz inequality
 yield 
\begin{equation}\label{v180}
\frac{\eta^2}2
\iint q_1q_2 \rho+\frac{\eta}2R_-(s) \le C \eta^2\varphi(q,q)+
 C\eta  s_0^{(1-\min(p,2))/2}\varphi(q,q)
+ \frac{C}{s^{\alpha}}.
\end{equation}
Moreover,  by \eqref{conmod},  we easily deduce that 
\begin{equation}\label{vvvv180}
CJ\sqrt{\varphi(q,q)}\le  \frac {\eta}{10}\varphi(q,q)+\frac{C}{\eta} J^2\le  \frac {\eta}{10}\varphi(q,q)+\frac{C}{\eta} J^{\bar p}.
\end{equation}
From  \eqref{v18},  \eqref{v180}, 
we get
\begin{align}\label{v181}
\frac{d}{ds} E_2 ( s)+\frac {\eta}2  E_2(s) \le& -\Big(\frac {\eta}{5}- C\eta  s_0^{(1-\min(p,2))/2}- C \eta^2- C  ( \| q \|_{\q H} +\bar J)\Big)\varphi(q,q) \nonumber\\
& -(\frac 2{p-1}-C\eta )\iint q_2^2
\frac \rho{1-y^2} dy
 + \frac{C}{\eta} J^{\bar p}+\frac{C}{s^{\alpha}}.
\end{align}
Note that, once again,  due to the fact that
 $s_0$ is large enough, and according to  \eqref{conmod},  we can consider   $ \|q\|_{\H}$,  $ s_0^{(1-\min(p,2))/2}$ and $\bar J$  as   small terms.  Then,
  there exists   $\eta_0>0$  such that, for all $\eta \in (0,\eta_0]$, we have 
\begin{align}\label{v181}
\frac{d}{ds} E_2 ( s)+\frac {\eta}2  E_2(s) \le
   \frac{C}{\eta} J^{\bar p}+\frac{C}{s^{\alpha}}.
\end{align}
 By definition \eqref{defJ} of $J$, we write
\begin{equation}\label{k11}
|(J^{\bar p})'| = \frac{ 2\bar p}{p-1} \left|\sum_{j=1}^{\k-1}({\zeta_{j+1}'}-{\zeta_j'})e^{- \frac 2{p-1}(\zeta_{j+1}-\zeta_j)}\right|J^{{\bar p -1}}
\le \frac{4\bar pJ^{\bar p}}{p-1}\sum_{j=1}^\k |{\zeta_j'}|.
\end{equation}
%Since ${d_j}=-th(\zeta_j )$,  we write 
%% (i) of Lemma \ref{lemproj*} and \eqref{defli}
%\begin{equation}\label{k2}
%|{\zeta^*_j}'| = \frac{|{d^*_j}'|}{1-{d^*_j}^2}= \frac{|d_j'(1+\nu_j) - d_j \nu_j'|}{(1+\nu_j)^2(1-{d^*_j}^2)}\le 
%C \frac{|d_j'|+ |\nu_j'- \nu_j'|+ |\nu_j|}{1-{d_j}^2}. 
%\end{equation}
Exploiting  %\eqref{est:nu}, 
 \eqref{est:zeta} and    \eqref{conmod}, we conclude that
$|{\zeta'_j}| \le Cs_0^{-\frac12}. $ Consequently, we have
\begin{equation}\label{k3}
\Big|(J^{\bar p})'\Big|\le C s_0^{-\frac12} J^{\bar p}.
\end{equation}
Since $s_0$ is large enough, then we can write 
\begin{equation}\label{k3}
\Big|(J^{\bar p})'\Big|\le \frac{\eta}{4} J^{\bar p}.
\end{equation}
Let us introduce
\begin{equation}\label{v190}
E_3(s)=  E_2(s)-\frac1{\eta^3}
  J^{\bar p}.
\end{equation} 
By using    \eqref{v181},  \eqref{k3} and \eqref{v190},  we are now  in a position   to get,  
\begin{align}\label{v182}
\frac{d}{ds} E_3 ( s)+\frac {\eta}2  E_3(s) \le
  \frac1{\eta^2}(C\eta
-\frac{1}{4}) J^{\bar p}+\frac{C}{s^{\alpha}}, \quad \forall s\ge s_0, \quad \forall \eta \in (0,\eta_0].
\end{align}
%Since $s_0$ is large enough, 
Then  for 
 $\eta_0$ small enough,  we have
\begin{align}\label{v183}
\frac{d}{ds} E_3 ( s)+\frac {\eta}2  E_3(s) \le 
\frac{C}{s^{\alpha}}, 
 \quad \forall s\ge s_0, \quad \forall \eta \in (0,\eta_0].
\end{align}
Integrating the last  inequality and using the definition of $E_3(s)$, we conclude
\begin{equation}\label{v184}
E_2(s)\le E_2(s_0)  e^{-\frac {\eta (s-s_0)}2}+\frac1{\eta^3}
  J^{\bar p}+ \frac{C}{ \eta s^{\alpha}},   \quad \forall s\ge s_0 \quad \forall \eta \in (0,\eta_0].
\end{equation}
 Using the definition of $E_2(s)$ as in \eqref{defE12},  the estimates \eqref{eq2},   \eqref{eq2a},  \eqref{eq2b}  and \eqref{v180}, we conclude
\begin{align*}\label{v184}
\phi(q,q)\le&\ \  C \| q(s_0) \|_{\q H}^2  \ e^{-\frac {\eta (s-s_0)}2}+\frac1{\eta^3}
  J^{\bar p}+ \frac{C}{\eta  s^{\alpha}}\\
&+\big(C\eta 
 + C s_0^{(1-\min(p,2))/2}\big)\phi(q,q),  \quad \forall s\ge s_0,  \quad \forall \eta \in (0,\eta_0].
\end{align*}
The above  inequality, used for  $\eta$ small enough,  together with \eqref{eq2}  imply \eqref{est:q}. This concludes  the proof of Proposition \ref{propdyn}. 
\end{proof}

{\bf{Acknowledgements.}} The first author  would like to thank   the Laboratoire Analyse
G\' eom\' etrie et Applications (LAGA) of  the University Paris 13, where he   was invited as a Visiting Professor 
in June 2018 and where part of this work was done. He is grateful to all the LAGA Faculty members
for the hospitality and the stimulating atmosphere.

\def\cprime{$'$} 

\noindent{\bf Address}:\\
% Department of Basic Sciences, Deanship of Preparatory and Supporting Studies,
  Imam Abdulrahman Bin Faisal University
P.O. Box 1982 Dammam, Saudi Arabia.\\
% D\'epartement de Math\'ematiques,  Facult\'e des Sciences de Tunis, Universit\'e de Tunis El-Manar,  Campus Universitaire 1060, %Tunis, Tunisia.\\
\vspace{-7mm}
\begin{verbatim}
e-mail:  mahamza@iau.edu.sa
\end{verbatim}
Universit\'e Paris 13, Institut Galil\'ee,
Laboratoire Analyse, G\'eom\'etrie et Applications, CNRS UMR 7539,
99 avenue J.B. Cl\'ement, 93430 Villetaneuse, France.\\
\vspace{-7mm}
\begin{verbatim}
e-mail: Hatem.Zaag@univ-paris13.fr
\end{verbatim}

\end{document}